\documentclass[10pt, reqno]{amsart}
\usepackage{graphicx} 
\usepackage{amsmath,amssymb,amsthm,mathtools}
\usepackage[a4paper,margin=3cm]{geometry}
\usepackage{amsfonts}
\usepackage{enumerate}
\usepackage[inline]{enumitem}
\numberwithin{equation}{section}
\usepackage{amsaddr}
\usepackage{bbm}
\usepackage{xcolor}
\usepackage{array}
\usepackage{rotating}
\usepackage{xcolor}
\usepackage[colorlinks=true,
            linkcolor=blue,
            citecolor=red,
            urlcolor=blue]{hyperref}
\usepackage{amsmath,amssymb,mathtools}
\usepackage{graphicx}

\theoremstyle{definition}
\newtheorem{definition}{Definition}[section]
\theoremstyle{plain}
\newtheorem{theorem}[definition]{Theorem}
\newtheorem{proposition}[definition]{Proposition}
\newtheorem{lemma}[definition]{Lemma}
\newtheorem{corollary}[definition]{Corollary}
\theoremstyle{remark}
\newtheorem{remark}[definition]{Remark}

\newcommand{\R}{\mathbb{R}}

\newcommand{\N}{\mathbb{N}}
\newcommand{\E}{\mathbb{E}}

\newcommand{\probmr}{\mathcal{M}(\R)}
\newcommand{\probmsymr}{\mathcal{M}^{sym}(\R)}

\begin{document}

\title{Functional inequalities for Boolean entropy}
\author{Guillaume Cébron, Kewei Pan}
\address[A1,A2]{Univ Toulouse, INSA Toulouse, CNRS, IMT, Toulouse, France.}
\email{guillaume.cebron@math.univ-toulouse.fr, kewei.pan@math.univ-toulouse.fr}

\begin{abstract}
Building on the recently introduced notion of Boolean entropy, we define the corresponding Boolean Fisher information via a de Bruijn identity. We study the monotonicity of this Fisher information in the Boolean Central Limit Theorem and establish several functional inequalities involving these quantities, including a logarithmic Sobolev inequality. We also develop Non-microstate counterparts and prove the associated functional inequalities. In addition, we introduce a notion of Stein discrepancy in the Boolean setting, which leads to new Berry--Esseen type bounds in the Boolean central limit theorem.

\end{abstract}

\maketitle

\tableofcontents

\section*{Introduction}
\noindent \textbf{Boolean entropy and Fisher information.}
In~\cite{pan2025entropy}, the second author introduced a notion of \emph{Boolean entropy} analogous to Voiculescu's free entropy. This Boolean entropy arises as the rate function in large deviation principles for the empirical spectral distribution of certain random matrix models. More precisely, two models were investigated: the GUE conditioned to have a majority of zero eigenvalues, and the GUE conditioned to have a majority of zero entries. For both models, a large deviation principle is established for the empirical distribution of the nonzero eigenvalues. The corresponding rate functions coincide (up to a multiplicative constant) and lead to the following definition. For any probability measure $\mu$ on the real line with finite second moment, the \textit{Boolean entropy} of $\mu$ is defined by the logarithmic integral
$$\Gamma(\mu):=\int \log |x|\,d\mu(x)\in [-\infty,+\infty).$$
The terminology Boolean entropy reflects the fact that the two random matrix models arise from earlier works \cite{cebron2024asymptotic,lenczewski2014limit} in which asymptotic Boolean independence (as defined in~\cite{speicher1996boolean}) naturally emerges, just as free entropy is defined by Voiculescu from the GUE model \cite{arous1997large,voiculescu1993analogues}, which is an instance of asymptotic free independence. Boolean independence forms the third component of the triptych — \emph{classical statistical independence / free independence / Boolean independence}~— put forward by Speicher~\cite{speicher1996universal} as the only three universal independences in noncommutative probability. The definition of Boolean entropy is thus the first stone of the development of a full theory for Boolean entropy, paralleling the classical and the free settings. In~\cite{pan2025entropy}, it is proved that the Boolean entropy satisfies two properties analogous to those of classical and free entropy: it remains monotone along the Boolean central limit theorem; and it is maximized by the limiting distribution, namely the Rademacher distribution $\mathbb{\mathrm{b}}=\frac{1}{2}\delta_{-1}+\frac{1}{2}\delta_{+1}$ (the symmetric Bernoulli distribution).

In this article, we introduce the \emph{Boolean Fisher information} $\Psi(\mu)$ by mimicking the de Bruijn identity: starting from a symmetric measure, and differentiating the Boolean entropy along the Boolean convolution semigroup generated by $\mathbb{\mathrm{b}}$, we obtain
$$\Psi(\mu)=\iint \frac{\log x^2-\log y^2}{x^2-y^2}\,d\mu(x)d\mu(y).$$
Starting from a non-symmetric measure leads to a different expression for the associated Fisher information, which appears significantly more difficult to analyze. For this reason, we systematically work with symmetric probability measures in the rest of this paper and leave the investigation of the non-symmetric Fisher information for future work.

We prove that the Boolean entropy and the Boolean Fisher information satisfy the Boolean analogues of the Shannon-Stam inequality, the Blachman-Stam inequality, the Cramér-Rao inequality, the entropy-power inequality and the Voiculescu-Stam inequality, which concern sums of Boolean independent random variables. Moreover, we establish the monotonicity of Boolean Fisher information, which provides another perspective of proving the monotonicity of Boolean entropy. These results show that these quantities are well adapted to Boolean independence.

$ $

\noindent \textbf{Functional inequalities.} Then, we consider $\Gamma(\mu)$ and $\Psi(\mu)$ with respect to the equilibrium measure $\mathbb{\mathrm{b}}$. The \emph{Boolean entropy relative to $\mathbb{\mathrm{b}}$} is defined, for any symmetric probability measure $\mu$, by
$$\Gamma(\mu|\mathbb{\mathrm{b}}):=\frac{1}{2}\int x^2\,d\mu(x) -\int \log |x|\,d\mu(x)-\frac{1}{2}\in [0,+\infty].$$
This is in fact the full rate function mentioned earlier — when the potential term is included — and which is minimized at $\Gamma(\mathbb{\mathrm{b}}|\mathbb{\mathrm{b}})=0$ as we will see (see also~\cite{pan2025entropy}). We define also the \emph{Boolean Fisher information $\Psi(\mu|\mathbb{\mathrm{b}})$ relative to $\mathbb{\mathrm{b}}$}  as the derivative of $\Gamma(\mu|\mathbb{\mathrm{b}})$ along the Boolean Ornstein-Uhlenbeck process, which admits the following explicit representation: for any symmetric probability measure $\mu$ on~$\mathbb{R}$,
$$\Psi(\mu|\mathbb{\mathrm{b}})=\iint \frac{\log x^2-\log y^2}{x^2-y^2}\,d\mu(x)d\mu(y) + \int x^2\,d\mu(x)-2\in [0,+\infty].$$
The positivity of the Boolean Fisher information is not trivial, and implies the monotonicity of the Boolean entropy along the Boolean Ornstein-Uhlenbeck process.

Following Voiculescu, the two quantities introduced above can be qualified as \textit{Microstates}, since they arise from random matrix approximations.   
The Non-microstates approach to free entropy, introduced by Voiculescu~\cite{voiculescu1998analogues}, defines free entropy without relying on Microstates. In the Boolean case, this intrinsic approach allows us to introduce an alternative version of the Fisher information. The \emph{Non-microstates Boolean Fisher information relative to $\mathbb{\mathrm{b}}$} is defined, for any symmetric probability measure~$\mu$ on $\mathbb{R}$, by
$$\Psi^*(\mu|\mathbb{\mathrm{b}}):=\int \frac{1}{x^2}\,d\mu(x)+ \int x^2\,d\mu(x)-2\in [0,+\infty].$$
Integrating this quantity along the Boolean Ornstein-Uhlenbeck process yields the \emph{Non-microstates Boolean entropy relative to $\mathbb{\mathrm{b}}$}, given, for any symmetric probability measure $\mu$ on $\mathbb{R}$, by
\begin{equation*}
    \Gamma^*(\mu|\mathbb{\mathrm{b}})=\frac{1}{2}\log\left(\int \frac{1}{x^2}\,d\mu(x)\right)+\frac{1}{2}\int x^2\,d\mu(x)-\frac{1}{2}\in [0,+\infty].
\end{equation*}
Finally, in parallel with Stein's method developed 
in~\cite{ledoux2015stein,fathi2017free} for the classical and free settings respectively, we define the \textit{Boolean Stein discrepancy} $D^*(\mu|\mathbb{\mathrm{b}})$. It is given, for any symmetric probability measure $\mu$ on $\mathbb{R}$, by
\begin{equation*}
    D^*(\mu|\mathbb{\mathrm{b}}):=\left(\int x^4\,d\mu(x)-2\int x^2\,d\mu(x)+1\right)^{1/2}\in [0,+\infty].
\end{equation*}

We establish the following functional inequalities which provide a clearer understanding of how these quantities are related, and in particular how they all provide upper bounds on the quadratic Wasserstein-2 distance $W_2$ from the Rademacher distribution $\mathbb{\mathrm{b}}$. The striking parallel between these inequalities and their classical and free counterparts provides yet another argument for considering these quantities as the proper Boolean analogues of the classical and free ones.
For any symmetric probability measure $\mu$ on $\mathbb{R}$, we have:
\begin{figure}[h]
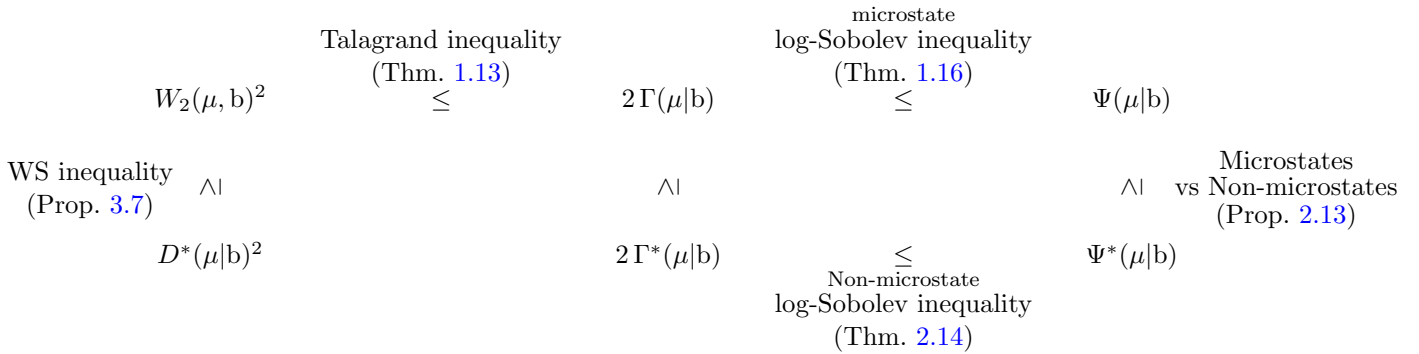

\centering
\setlength\arraycolsep{1.8em}
{$
\begin{array}{@{} c @{\qquad} c @{\qquad} c @{\qquad} c @{\qquad} c @{}}

W_2(\mu,\mathbb{\mathrm{b}})^2
& \overset{\vcenter{\hbox{\shortstack{
Talagrand inequality\\
(Thm.~\ref{thm:TTI})
}}}}{\leq}
& 2\,\Gamma(\mu|\mathbb{\mathrm{b}})
& \overset{\vcenter{\hbox{\shortstack{\footnotesize
microstate\\
log-Sobolev inequality\\
(Thm.~\ref{thm:LSI})
}}}}{\leq}
& \Psi(\mu|\mathbb{\mathrm{b}}) \\[1.3em]

\mathllap{\vcenter{\hbox{\shortstack{
WS inequality\\
(Prop.~\ref{prop:WS})
}}}\quad}
\rotatebox{90}{$\geq$}
&&
\rotatebox{90}{$\geq$}
&&
\mathrlap{\hspace{0.7cm}\vcenter{\hbox{\shortstack{
Microstates\\
vs Non-microstates\\
(Prop.~\ref{thm:micro-nonmicro})
}}}}
\rotatebox{90}{$\geq$}
\\[1.3em]

D^*(\mu|\mathbb{\mathrm{b}})^2
&&
2\,\Gamma^*(\mu|\mathbb{\mathrm{b}})
& \underset{\vcenter{\hbox{\shortstack{\footnotesize
Non-microstate\\
log-Sobolev inequality\\
(Thm.~\ref{thm:nmLSI})
}}}}{\leq}
&
\Psi^*(\mu|\mathbb{\mathrm{b}})

\end{array}
$}
\caption{Hierarchy of Boolean functional inequalities.}
\label{fig:boolean-functional-inequalities}
\end{figure}

Moreover, we are able to prove two refined versions of the diagonal log-Sobolev inequality $ 2\Gamma(\mu|\mathbb{\mathrm{b}})\leq \Psi^*(\mu|\mathbb{\mathrm{b}})$: the direct Boolean analogue of the Otto–Villani HWI inequality, namely the \emph{Boolean HWI inequality} (Thm.~\ref{thm:HWI_sym}) 
$$2\Gamma(\mu|\mathbb{\mathrm{b}})
\leq
2 W_2(\mu,\mathbb{\mathrm{b}})\,\sqrt{\Psi^*(\mu|\mathbb{\mathrm{b}})}
-W_2(\mu,\mathbb{\mathrm{b}})^2\leq \Psi^*(\mu|\mathbb{\mathrm{b}})$$
and the direct Boolean analogue of the Ledoux-Nourdin-Peccati HSI inequality, namely the \emph{Boolean HSI inequality} (Thm.~\ref{thm:HSI}) 
$$ 2\Gamma(\mu|\mathbb{\mathrm{b}})\leq D^*(\mu|\mathbb{\mathrm{b}})^2\cdot \log\left(1+\frac{\Psi^*(\mu|\mathbb{\mathrm{b}})}{D^*(\mu|\mathbb{\mathrm{b}})^2}\right)\leq \Psi^*(\mu|\mathbb{\mathrm{b}}).$$
Unlike the one-dimensional free case,  we observe a difference between the Microstates entropy and the Non-microstates one.

Starting from a not necessarily symmetric measure $\mu$, all the quantities involved but $W_2(\mu,\mathbb{\mathrm{b}})^2$ are invariant under taking the symmetrization $\mu^{\mathrm{s}}(B):=\frac{1}{2}\mu(B\cup (-B))$ of $\mu$. Consequently, replacing $W_2(\mu,\mathbb{\mathrm{b}})^2$ by $W_2(\mu^{\mathrm{s}},\mathbb{\mathrm{b}})^2=\int||x|-1|^2 d\mu(x)$, the eight inequalities above remain true for all probability measures on $\mathbb{R}$, not necessarily symmetric.

$ $

\noindent \textbf{Berry-Esseen inequalities.} The WS inequality provides a direct control of the Wasserstein distance and yields a new quantitative estimate in the Boolean central limit theorem for the Boolean convolution $\uplus$. For any measure $\mu$, we denote by $m_k(\mu)$ (with $k\in \mathbb{Z}$) the $k$-th moment $\int x^k d\mu(x)$ of $\mu$ whenever it exists. Let $\mu$ be a probability measure such that the first four moments exist with $m_1(\mu) = 0$, $m_2(\mu) = 1$ ($\mu$ is centered and normalized),
and $m_4(\mu) < +\infty$.  We define $\mu_n$ to be the dilation of $\mu^{\uplus n}$ by $\frac{1}{\sqrt{n}}$, i.e. $\mu_n(B)=\mu^{\uplus n}(\{x:\frac{1}{\sqrt{n}}x\in B\})$. We prove that, whenever $\mu$ is symmetric, for all $n\geq 1$, we have
$$W_2(\mu_n, \mathbb{\mathrm{b}})\leq D^*(\mu_n|\mathbb{\mathrm{b}})=\frac{1}{\sqrt{n}}\sqrt{m_4(\mu)-1}.$$
This follows directly from the transport control provided by the WS inequality.
The rate of convergence $O(n^{-1/2})$ is the usual one in Berry-Esseen inequalities for the classical central limit theorem~\cite{berry1941accuracy,esseen1945fourier} and for the free one~\cite{chistyakov2008limit,kargin2007berry}. In the Boolean case, to the best of the authors' knowledge, there are only four existing Berry-Esseen type results: the rate of convergence $O(n^{-1/3})$ was obtained for the (weaker) Lévy distance under the same assumptions~\cite{arizmendi2018berry}, the rate of convergence $O(n^{-1/2})$ was obtained for the Lévy distance under the (stronger) assumption of existence of the $6$-th moment~\cite{salazar2022berry}, the rate of convergence $O(n^{-1/2})$ was obtained for an \textit{ad-hoc} distance under the (weaker) assumption of existence of the $3$-th moment~\cite{hamdan2023fixed}, and the rate of convergence $O(n^{-1/2})$ was obtained for a coupling distance for measures with bounded support~\cite{jekel2019operad}. The assumption of a \textit{symmetric} measure is not needed in the four works cited, and since the Stein discrepancy directly controls the Wasserstein-$1$ distance $W_1$ without symmetry assumptions we are able to remove this assumption as well~: 
for all $n\geq 1$, we have
$$W_1(\mu_n, \mathbb{\mathrm{b}})\leq D^*(\mu_n|\mathbb{\mathrm{b}})=\frac{1}{\sqrt{n}}\sqrt{m_4(\mu)-1}.$$
Beyond metric convergence, one may also study information-theoretic convergence. In the classical (and also free) central limit theorem, it is also possible to quantify the convergence of the Fisher information and the convergence of the relative entropy.  The optimal rate is $O(n^{-1})$ under the appropriate conditions~\cite{artstein2004rate,chistyakov2013asymptotic,johnson2004fisher}. However, in the free and classical case, the Stein method allows to reach quite directly the (almost optimal) rate $O(\log(n)/n)$ under very weak conditions~\cite{ledoux2015stein,fathi2017free}. The Boolean Stein's method  developed in this paper allows to get the following quantitative entropic central limit theorems. 
Let $\mu$ be a symmetric probability measure with the existing moments $m_1(\mu) = 0$, $m_2(\mu) = 1$, $m_4(\mu) <+\infty$ and $m_{-2}(\mu) < +\infty$. Then, as $n\to \infty$,

$$ \Gamma(\mu_n|\mathbb{\mathrm{b}})=O\left(\frac{\log(n)}{n}\right)\ \ \ \text{ and }\ \ \ \Psi(\mu_n|\mathbb{\mathrm{b}})=O\left(\frac{1}{\sqrt{n}}\right).$$

Notice that, under the same conditions, $\Gamma^*(\mu_n|\mathbb{\mathrm{b}})$ and $\Psi^*(\mu_n|\mathbb{\mathrm{b}})$  remain constant as $n$ increases, which reveals a structural difference in behavior between the Microstates and Non-microstates quantities.

\subsection*{Organisation of the paper}

In Section~\ref{Sec1:booleanentropy}, we introduce the Boolean entropy $\Gamma(\mu)$ and the Boolean Fisher information $\Psi(\mu)$, establish the Boolean de Bruijn identity along the Boolean heat semigroup, and prove the monotonicity of $\Gamma(\mu)$ and $\Psi(\mu)$ in the Boolean central limit theorem. 
In Section~\ref{Sec2:booleanentropy}, we develop the main information-theoretic properties of Boolean entropy and Fisher information, including the Shannon--Stam inequality, the Blachman--Stam inequality, the Cramér--Rao inequality, the entropy power inequality, and the Voiculescu--Stam inequality. 
In Section~\ref{Sec3:booleanentropy}, we introduce the relative Boolean entropy $\Gamma(\mu|\mathbb{\mathrm{b}})$ and relative Boolean Fisher information $\Psi(\mu|\mathbb{\mathrm{b}})$, and we prove the Boolean Talagrand transport inequality, the Boolean log-Sobolev inequality, and the exponential decay of relative entropy along the Boolean Ornstein--Uhlenbeck process.

In Section~\ref{sec:Schwinger}, we introduce the Boolean Schwinger--Dyson equation which is a motivation for subsequent definitions. In Section~\ref{Sec1:booleanentropy*}, we define the Non-microstates Boolean Fisher information $\Psi^*(\mu)$, the Non-microstates Boolean entropy $\Gamma^*(\mu)$ and establish the corresponding de Bruijn identity. In Section~\ref{Sec2:booleanentropy*}, we prove the main properties of the Non-microstates quantities, including the Blachman--Stam inequality, the Cramér--Rao inequality, the Voiculescu--Stam equality, the entropy power equality, and the constancy in the Central Limit Theorem. In Section~\ref{sec:nmfunctional_inequalities}, we introduce the Non-microstates relative Boolean entropy $\Gamma^*(\mu|\mathbb{\mathrm{b}})$ and Non-microstates relative Boolean Fisher information $\Psi^*(\mu|\mathbb{\mathrm{b}})$, we prove the Non-microstate log-Sobolev  inequality, establish the comparison between the Microstates and Non-microstates quantities and prove   HWI inequality.

In Section~\ref{sec:Steinmethod}, we adapt Stein’s method to the Rademacher target $\mathbb{\mathrm{b}}$, and derive a Berry--Esseen bound for $W_1$.
In Section~\ref{sec:Stein_discrepancy}, we introduce the Boolean Stein discrepancy $D^*(\mu|\mathbb{\mathrm{b}})$ and relate it to transport, proving in particular the WS inequality and the explicit Berry--Esseen control of $D^*(\mu_n|\mathbb{\mathrm{b}})$ along the Boolean CLT.
In Section~\ref{sec:EntropicCLT}, we establish the HSI inequality and deduce quantitative entropic convergence in the Boolean CLT and the WSH inequality.

\subsection*{Notations} Let us denote by 
$\mathcal{M}(\R)$ the set of probability measures on $\mathbb{R}$ and by $\mathcal{M}^{sym}(\R)$ the subset of symmetric probability measures on $\mathbb{R}$. The Cauchy transform of $\mu \in \mathcal{M}(\mathbb{R})$ is defined by
$$
G_{\mu}(z)=\int \frac{1}{z-x}\,d\mu(x),\quad z \in \mathbb{H}_+ := \{ z \in \mathbb{C} : \Im z > 0 \}.
$$
Writing $z = x + i\epsilon$ with $\epsilon > 0$, we have the decomposition $G_\mu(x+i\varepsilon) = \pi( Q_\epsilon \ast \mu(x) - i P_\epsilon\ast \mu(x)),$
where
$$
P_\varepsilon(t) = \frac{1}{\pi}\frac{\epsilon}{t^2+\epsilon^2} ,
\quad
Q_\epsilon(t) = \frac{1}{\pi}\frac{t}{t^2+\epsilon^2}
$$
are respectively the Poisson kernel and the \emph{conjugate} Poisson kernel. Recall that $P_\epsilon\ast \mu$ weakly converges to $\mu$ as $\epsilon\to 0^+$. Moreover, for any $f \in L^1_{\mathrm{loc}}(\mathbb{R})$, we have $P_\epsilon \ast f(x) \longrightarrow f(x)$ for every Lebesgue point of $f$, and the convergence is uniform if $f\in C_0(\R)$. The self-energy (or reciprocal Cauchy transform, also called the $K$-transform) of $\mu$ is given by $K_\mu(z):=z-1/G_{\mu}(z)$  for $z\in\mathbb{H}_+$. For $\mu,\nu\in \mathcal{M}(\R)$, the Boolean convolution  introduced by Speicher and Woroudi \cite{speicher1996boolean} is the unique measure $\mu\uplus \nu\in \mathcal{M}(\R)$ such that
\begin{equation*}
	K_{\mu\uplus \nu}(z)=K_{\mu}(z)+K_{\nu}(z),\ \forall z\in\mathbb{H}_+.
\end{equation*}
For $\mu\in \mathcal{M}(\R)$, we denote by $m_k(\mu)$ (with $k\in \mathbb{Z}$) the $k$-th moment $\int x^k d\mu(x)$ of $\mu$ whenever it exists. For $k\geq 1$, the Wasserstein-$k$ distance (or Kantorovich–Rubinstein metric) between $\mu,\nu\in \mathcal{M}(\R)$ is defined by
   \begin{equation*}
		W_k(\mu,\nu):=\inf_{\pi} \left(\iint |x-y|^k\,d\pi(x,y)\right)^{\frac{1}{k}} ,
	\end{equation*}
   where the infimum is over all couplings of $(\mu,\nu)$. We will not use the Lévy distance $d_{\text{Lévy}}$ in this article, but if one wants to transfer our result to the Lévy distance, let us mention that
   $$\sqrt{d_{\text{Lévy}}(\mu,\nu)} \leq W_1(\mu,\nu)\leq W_2(\mu,\nu)\ \ \text{ and }\ \ 
 d_{\text{Lévy}}(\mu,\nu)\leq W_2(\mu,\nu).$$
 If $X$ and $Y$ are random variables with respective law $\mu$ and $\nu$, we set
 $W_1(X,Y):=W_1(\mu,\nu)$, and the Kantorovich duality yield
 $$W_1(X,Y)=\sup_{\varphi\ 1-\text{Lipschitz}}|\mathbb{E}[\varphi(X)]-\mathbb{E}[\varphi(Y)]|.$$
We will consider Boolean independent variables. Boolean independence is a non-unital
phenomenon, and cannot in general be realized in a $W^*$-probability space. In this article, a \textit{non-commutative probability space} $(\mathcal{M},\tau)$ is a von Neumann algebra $\mathcal{M}$ with a nondegenerate normal state $\tau$ which is not necessarily faithful. Elements of $\mathcal{M}$ are called \textit{(non-commutative random) variables}. For any self-adjoint $A=A^*\in \mathcal{M}$, the law $\mathcal{L}(A)$ of $A$ is the unique measure $\mu\in \mathcal{M}(\R)$ such that, for all $k\in \mathbb{N}$, we have
$$\tau(A^k)=m_k(\mu).$$
The Cauchy transform $G_{\mu}$ will be often denoted by $G_A$.

Two self-adjoint variables $X,Y$ are \textit{Boolean independent} if and only if, for all finite sequences of integers $k_1,k_2, \ldots \geq 1$, we have
$$\tau(X^{k_1}Y^{k_2}X^{k_3}\cdots)=\tau(X^{k_1})\tau(Y^{k_2})\tau(X^{k_3})\cdots $$
and
$$\tau(Y^{k_1}X^{k_2}Y^{k_3}\cdots)=\tau(Y^{k_1})\tau(X^{k_2})\tau(Y^{k_3})\cdots,$$
and in this case, the law of the sum is given by the Boolean convolution: $$\mathcal{L}(X+Y)=\mathcal{L}(X)\uplus\mathcal{L}(Y).$$
In the symmetric case, i.e. whenever $\mathcal{L}(X),\mathcal{L}(Y)\in \mathcal{M}^{sym}(\R)$, all odd moments of $X$ and $Y$ vanish. In particular, $\mathcal{L}(X)\uplus\mathcal{L}(Y)$ is also symmetric. Moreover, we get
\begin{equation}\label{eq:square}
    \mathcal{L}((X+Y)^2)=\mathcal{L}(X^2+Y^2)
\end{equation}
which will be useful several times in this article. More generally, for all $\mu, \nu\in \mathcal{M}^{sym}(\R)$, if we denote by $\mu^{(2)}, \nu^{(2)}$ and $(\mu\uplus\nu)^{(2)}$ the push-forward of $\mu, \nu$ and $\mu\uplus\nu$ by the map $x\mapsto x^2$, we have
$$(\mu\uplus\nu)^{(2)}=\mu^{(2)}\uplus \nu^{(2)} $$
as well-documented in~\cite{mlotkowski2011symmetrization}.

For a probability measure $\mu$ on $\mathbb{R}$ with finite fourth moment, we may also
define the first four Boolean cumulants $r_1(\mu)$, $r_2(\mu)$, $r_3(\mu)$, and $r_4(\mu)$ of $\mu$ by
the equations (see~\cite{arizmendi2018berry,speicher1996boolean})
\begin{align*}
m_1(\mu) &= r_1(\mu), \\
m_2(\mu) &= r_1(\mu)^2 + r_2(\mu), \\
m_3(\mu) &= r_1(\mu)^3 + 2 r_1(\mu) r_2(\mu) + r_3(\mu), \\
m_4(\mu) &= r_1(\mu)^4 + 3 r_1(\mu)^2 r_2(\mu) + r_2(\mu)^2 + 2 r_3(\mu) r_1(\mu) + r_4(\mu).
\end{align*}
Similarly to the classical cumulants, the Boolean cumulants defined above
satisfy for $\mu, \nu \in \mathcal{M}(\mathbb{R})$ and $i \in \mathbb{N}$ that
\begin{equation}
r_i(\mu \uplus \nu) = r_i(\mu) + r_i(\nu)\notag
\end{equation}
and if $\nu$ is the dilation of $\mu$ by $a$, we have
\[
r_i(\nu) = a^i r_i(\mu).\]
Let $\mu$ be a probability measure such that the first four moments exist with $m_1(\mu) = 0$, $m_2(\mu) = 1$,
and $m_4(\mu) < +\infty$.  We define $\mu_n$ to be the dilation of $\mu^{\uplus n}$ by $\frac{1}{\sqrt{n}}$, i.e. $\mu_n(B)=\mu^{\uplus n}(\{x:\frac{1}{\sqrt{n}}x\in B\})$. Using the two previous rules, we get
\begin{align}
m_1(\mu_n)=r_1(\mu_n) &=m_1(\mu)= 0, \notag\\
m_2(\mu_n)=r_2(\mu_n) &=m_2(\mu)= 1,\notag\\
m_4(\mu_n)-1=r_4(\mu_n) &= \frac{1}{n}r_4(\mu)=\frac{1}{n}(m_4(\mu)-1).\label{eq:fourthcumulant}
\end{align}

$ $

\noindent \textbf{Acknowledgement.} We warmly thank Octavio Arizmendi and Djalil Chafaï for insightful discussions related to the topics of this paper.

\setcounter{tocdepth}{2}

\section{The Microstates approach}\label{Sec:booleanentropy}

\subsection{Boolean entropy and Fisher information}\label{Sec1:booleanentropy}
	In this section, we introduce a notion of Boolean Fisher information and show that it interacts with Boolean entropy via an integral formula that mirrors the classical de Bruijn identity. This provides a differential characterization of Boolean entropy along a Boolean analogue of a heat flow. Let us first recall the definition of Boolean entropy.
\begin{definition}
let $X$ be a self-adjoint operator of a non-commutative probability space $(\mathcal{M},\tau)$ with law $\mu\in\mathcal{M}(\R)$. The \textit{Boolean entropy} $\Gamma(X)$ of $X$ is defined as
\begin{equation}\label{def:entropy}
		\Gamma(X):=\int \log |x|\,d\mu(x)\in [-\infty,+\infty).
	\end{equation}
    More generally, for any $\mu \in\mathcal{M}(\R)$, we define the \textit{Boolean entropy} $\Gamma(\mu)$ of $\mu$ as
\begin{equation*}
		\Gamma(\mu):=\int \log |x|\,d\mu(x) = \frac{1}{2}\int \log x\,d\mu^{(2)}(x)\in [-\infty,+\infty)
	\end{equation*} provided that $\int_1^{+\infty}\log x\,d\mu^{(2)}(x)<+\infty$. Recall that $\mu^{(2)}\in \mathcal{M}(\R_+)$ is the push forward measure of $\mu$ by the map $x\mapsto x^2$
\end{definition}

Throughout this section, unless explicitly stated otherwise, all probability measures are assumed to be symmetric. In the Boolean case, the heat semigroup is given by the Boolean convolution with a scaled Rademacher variable: starting with a self-adjoint operator $X$ of a non-commutative probability space $(\mathcal{M},\tau)$ whose law is symmetric, we consider the variable $X+\sqrt{t}B$ where $B\in \mathcal{M}$ is Boolean independent from $X$ and distributed according to the Rademacher law $\mathrm{b}= \frac{1}{2}\delta_{-1}+\frac{1}{2}\delta_1.$ Our first result, Theorem~\ref{thm:de bruijn}, establishes the fundamental link between the Boolean entropy and the Boolean Fisher information, in the form of a Boolean de Bruijn identity. It expresses the increment of Boolean entropy along the Boolean convolution semigroup $(\mathcal{L}(X+\sqrt{t}B))_{t\geq 0}$ generated by the Rademacher variable in terms of a quantity that we introduce now as the Boolean Fisher information.

\begin{definition}
Let $X$ be a self-adjoint operator of a non-commutative probability space $(\mathcal{M},\tau)$ with law $\mu\in\mathcal{M}^{sym}(\R)$. The \textit{Boolean Fisher information} $\Psi(X)$ of $X$ is defined as
\begin{equation}\label{def:fisher}
	\Psi(X):=\iint \frac{\log x^2-\log y^2}{x^2-y^2} \,d\mu(x)d\mu(y)\in (0,+\infty]
\end{equation}
understood with the continuous extension on the diagonal.
More generally, for any $\mu \in\mathcal{M}^{sym}(\R)$, we define the \textit{Boolean Fisher information} $\Psi(\mu)$ of $\mu$ as
\begin{equation*}
    \Psi(\mu):=\iint \frac{\log x^2-\log y^2}{x^2-y^2} \,d\mu(x)d\mu(y) = \iint \frac{\log x-\log y}{x-y} \,d\mu^{(2)}(x)d\mu^{(2)}(y)\in (0,+\infty].
\end{equation*}
\end{definition}
    Classically, Fisher information arises as the derivative of entropy along the Brownian motion, a relation known as the de Bruijn identity. Voiculescu~\cite{voiculescu1993analogues} extended this framework to free probability, introducing the notions of free entropy and free Fisher information. In the Boolean setting, an analogous de Bruijn identity can also be established.
	\begin{theorem}\label{thm:de bruijn}
    Let $X$ be a self-adjoint operator of a non-commutative probability space $(\mathcal{M},\tau)$, with law $\mu\in\mathcal{M}^{sym}(\R)$,  and $B\in \mathcal{M}$ be Boolean independent from $X$ and distributed according to~$\mathrm{b}$. Fix $t\geq 0$ such that $\Gamma(X+\sqrt{s}B)<\infty,\ \Psi(X+\sqrt{s}B)<+\infty$ for any $s\in[0,t].$ Then we have
	\begin{equation}\label{eq:de bruijn}
		\Gamma(X+\sqrt{t}B) - \Gamma(X) = \int_0^t \frac{1}{2}\Psi(X + \sqrt{s}B )\,ds.
	\end{equation}
    In particular, $\Gamma(X+\sqrt{t}B)$ is increasing with respect to $t$.
	\end{theorem}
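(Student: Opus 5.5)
The plan is to pass to squares and exploit the simple structure of the Boolean heat flow on $\mathbb{R}_+$. Write $\nu:=\mu^{(2)}$ and $\nu_t:=\mathcal{L}((X+\sqrt{t}B)^2)$. By \eqref{eq:square}, $\nu_t=\mathcal{L}(X^2+tB^2)$, and since $B^2=1$ this is $\nu_t=\nu\uplus\delta_t$. Because $K_{\delta_t}(z)=t$, we get $1/G_{\nu_t}(z)=1/G_\nu(z)-t$, that is,
\[
G_{\nu_t}(z)=\frac{G_\nu(z)}{1-tG_\nu(z)},\qquad \partial_t G_{\nu_t}(z)=G_{\nu_t}(z)^2 .
\]
Since $\Gamma(X+\sqrt{t}B)=\frac12\int\log x\,d\nu_t(x)$ and $\Psi(X+\sqrt{t}B)=\iint\frac{\log x-\log y}{x-y}\,d\nu_t\,d\nu_t$, the whole statement reduces to a first-variation formula for the family $(\nu_t)$.

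\textbf{Step 1 (smooth test functions).} For $f$ bounded and smooth with bounded derivative, I will prove
\[
\int f\,d\nu_t-\int f\,d\nu=\int_0^t\iint\frac{f(x)-f(y)}{x-y}\,d\nu_s(x)\,d\nu_s(y)\,ds .
\]
\begin{itemize}
\item Use the Stieltjes inversion $\int f\,d\nu_s=\lim_{\varepsilon\to0}-\frac1\pi\Im\int f(x)G_{\nu_s}(x+i\varepsilon)\,dx$ (Poisson kernel convergence, as in the Notations).
\item Integrate the ODE in $s$ to get $G_{\nu_t}-G_\nu=\int_0^t G_{\nu_s}^2\,ds$.
\item Use the partial-fraction identity
\[
G_{\nu_s}(z)^2=\iint\frac{1}{x-y}\Bigl(\frac1{z-x}-\frac1{z-y}\Bigr)\,d\nu_s(x)\,d\nu_s(y).
\]
\item Applying $-\frac1\pi\Im\int f(\cdot)\,dx$ then gives
\[
\iint\frac{P_\varepsilon*f(x)-P_\varepsilon*f(y)}{x-y}\,d\nu_s\,d\nu_s .
\]
This is bounded by $\|f'\|_\infty$ uniformly in $s$ and $\varepsilon$, so dominated convergence lets $\varepsilon\to0$ and exchanges the limit with $\int_0^t ds$.
\end{itemize}

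\textbf{Step 2 (passing to $f=\frac12\log$).} This is the main obstacle, because $\log$ is singular at $0$ and unbounded at $\infty$.
\begin{itemize}
\item I would first apply Step 1 to $f_{\delta,M}(x)=\frac12\log(x+\delta)$, suitably flattened beyond a level $M$ so that it is bounded with bounded derivative.
\item Let $M\to\infty$ using the hypothesis $\Gamma(X+\sqrt{s}B)<\infty$ (so the $\log^+$ parts are integrable).
\item The difference quotient $\frac{\log(x+\delta)-\log(y+\delta)}{x-y}$ is nonnegative and increases as $\delta\downarrow0$. Hence monotone convergence applies both inside the double integral and in $\int_0^t ds$.
\item The hypothesis $\Psi(X+\sqrt{s}B)<\infty$ ensures the limit is finite.
\item On the left, $\int\log(x+\delta)\,d\nu_t\downarrow\int\log x\,d\nu_t$ by monotone convergence. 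For $s>0$ one expects $\nu_s$ not to charge $0$ heavily, which is consistent with finiteness of $\Gamma$; I would check that finiteness of $\Gamma(X)$ makes both sides finite.
\end{itemize}
This yields \eqref{eq:de bruijn}, the factor $\frac12$ coming from $\frac12\log$.

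\textbf{Step 3 (monotonicity).} The integrand $\frac{\log x-\log y}{x-y}$ is strictly positive, so $\Psi>0$. By \eqref{eq:de bruijn}, $t\mapsto\Gamma(X+\sqrt tB)$ is therefore increasing.
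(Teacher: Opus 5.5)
Your proposal is correct and follows essentially the same route as the paper: pass to $\nu_t=\nu\uplus\delta_t$ with $\partial_t G_{\nu_t}=G_{\nu_t}^2$, use the partial-fraction identity for $G^2$ to obtain the Poisson-smoothed difference quotient, let $\epsilon\to0$, and remove the regularization of $\log$ by monotone convergence in $\delta$. The differences are only technical: the paper regularizes directly with the harmonic function $\log|x+i\delta|$ instead of first proving a first-variation formula for bounded $C^1$ test functions, and your truncation at level $M$ is superfluous because all $\nu_s$, $s\in[0,t]$, live in a fixed compact set since the operators are bounded (note also that $B^2=1$ holds only in law, as $\tau$ need not be faithful, but that is all you use).
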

    It is worth noting that the Fisher information $\Psi$ solely depends on the push forward probability measure $\mu^{(2)}\in\mathcal{M}(\R_+)$ by $x\mapsto x^2$. Therefore, without altering the expression of $\Psi$, the Fisher information can be defined on $\probmr$. However, the de Bruijn identity generally fails under this naïve extension. Indeed, if the law of $X$ is not symmetric, we have another explicit expression for the Fisher information satisfying the above de Bruijn relation, given by
	\begin{equation*}
		\Psi(X)=\Psi(\mu):=\iint \frac{\frac{\log x^2}{x}-\frac{\log y^2}{y}}{x-y}\,d\mu(x)d\mu(y)\in [-\infty,+\infty],\quad \text{if}\ X\sim \mu\in \probmr.
	\end{equation*}
	We emphasize that unlike the classical and free Fisher information, this quantity is not necessarily nonnegative and the translation invariance, scaling property (i.e.  $\Psi(\lambda X+m)=\lambda^{-2}\Psi(X)$ for any $\lambda, m\in \R\setminus\{0\}$) are no longer satisfied. Nevertheless, in the case where $\mu$ is symmetric, it can be shown to reduce exactly to the Fisher information defined in \eqref{def:fisher}. Moreover, the symmetric framework provides substantially better analytical properties, under which many results from the classical theory, as well as their free analogues, can be recovered. For this reason, we work from now on in the symmetric framework unless explicitly stated otherwise.
	\begin{proof}
Since 
$X$ and $B$ are self-adjoint elements of a von Neumann algebra, they are bounded operators. Hence the spectral measure of $X$ and of $X+\sqrt{t}B$ are compactly supported. Thanks to \eqref{eq:square}, we know that $\mathcal{L}((X+\sqrt{t}B)^2)=\mathcal{L}(X^2+t B^2)$, and we can compute
\begin{align*}
    G_{(X+\sqrt{t}B)^2}(z) &=G_{X^2+tB^2}(z)\\
    &=\frac{1}{z-K_{X^2+tB^2}(z)}\\
    &=\frac{1}{z-K_{X^2}-K_{tB^2}(z)}\\
    &=\frac{1}{1/G_{X^2}(z)-K_{tB^2}(z)}.
\end{align*}
Now,  since $\mathcal{L}(t B^2)=\delta_t$,  we have
$$ K_{tB^2}(z)=z-\frac{1}{G_{tB^2}(z)}
    =z-\frac{1}{(z-t)^{-1}}
    =t,$$
which yields
\begin{equation}G_{(X+\sqrt{t}B)^2}(z)=\frac{1}{1/G_{X^2}(z)-t}=\frac{G_{X^2}(z)}{1-tG_{X^2}(z)}.	\label{eq:Cauchytimet}\end{equation}
In particular, we have
\begin{equation*}
	\frac{\partial }{\partial t}G_{(X+\sqrt{t}B)^2}(z) = (G_{(X+\sqrt{t}B)^2 }(z))^2.
\end{equation*}
To abbreviate the notation, we denote by $\mu(t,\cdot)=\mathcal{L}(X+\sqrt{t}B)$ and $G(t,z)=G_{(X+\sqrt{t}B)^2}(z)$ so that $G(t,z)$ is the Cauchy transform of $\mu^{(2)}(t)$. Put $u(t,z)=\pi^{-1}\Re G(t,z),\ v(t,z)=-\pi^{-1}\Im G(t,z)$, By the above relation, for $z=x+i\epsilon$ we have
\begin{equation*}
	\frac{\partial }{\partial t}v(t,z) = -\frac{1}{\pi}\Im (G(t,z))^2 = 2\pi u(t,z)v(t,z).
\end{equation*}
Denote by $f_\delta(x)=\log |x+i\delta|$ for any $\delta>0$ and $$\Gamma_\delta(\mu)=\int_{\R_+} f_\delta(x)\,d\mu(x),\quad \Psi_\delta(\mu)= \iint_{\R_+^2} \frac{f_\delta(x)-f_\delta(y)}{x-y}\,d\mu(x)d\mu(y).$$
For any $0\le s\le t$ and fixed $\epsilon>0$, since $\mu^{(2)}(s)$ is compactly supported, we have $|\frac{\partial}{\partial s}v(s,x+i\epsilon)|\leq |(G(s,x+i\epsilon))^2|\leq \frac{C\epsilon}{1+|x|^2}$. Thus,
\begin{align*}
	\frac{d}{ds}\Gamma_{\delta}(v(s,\cdot+i\epsilon)) &= \int f_\delta(x)\cdot \frac{\partial}{\partial s}v(s,x+i\epsilon)\,dx\\
    &= -\frac{1}{\pi}\Im \int f_\delta(x)\cdot (G(s,x+i\epsilon))^2\,dx\\
	&= -\frac{1}{\pi}\Im \int f_\delta(x)\int \frac{1}{x+i\epsilon-y}\,d\mu^{(2)}(s,y)\int \frac{1}{x+i\epsilon-y'}\,d\mu^{(2)}(s,y')\,dx\\
	&= -\frac{1}{\pi}\Im \iint \frac{1}{y-y'}\left[\int \left(\frac{f_\delta(x)}{x+i\epsilon-y} - \frac{f_\delta(x)}{x+i\epsilon-y'} \right)\,dx\right]\,d\mu^{(2)}(s,y)d\mu^{(2)}(s,y').\\
    &=\iint \frac{1}{y-y'}( P_\epsilon\ast f_\delta (y) - P_\epsilon\ast f_\delta (y'))\,d\mu^{(2)}(s,y)d\mu^{(2)}(s,y').
\end{align*}
Moreover, note that $$ \Gamma_\delta(v(t,\cdot+i\epsilon)) = \int \log |x+i\delta|\cdot v(t,x+i\epsilon)\, dx = \int \log |x+i(\delta+\epsilon)|\,d\mu(t,x) = \Gamma_{\delta+\epsilon}(\mu(t)),$$ hence, we have
\begin{equation}
    \Gamma_{\delta+\epsilon}(\mu^{(2)}(t))-\Gamma_{\delta+\epsilon}(\mu^{(2)}) = \int_0^t \iint \frac{P_\epsilon\ast f_\delta (y) - P_\epsilon\ast f_\delta (y')}{y-y'}\,d\mu^{(2)}(s,y)d\mu^{(2)}(s,y')\,ds,
\end{equation}
Let $\epsilon\to 0^+,$ the integrand on the right hand side converges to $\Psi_\delta(\mu(t))$ due to the fact that $P_\epsilon\ast f_\delta\longrightarrow f_\delta$ uniformly on the support of $\mu^{(2)}(s)$. To conclude, we get $\Gamma_\delta(\mu^{(2)}(t)) - \Gamma_\delta(\mu^{(2)}) = \int_0^t \Psi_\delta(\mu^{(2)}(s))\,ds$. Finally let $\delta\to 0^+$, and the monotone convergence theorem yields \eqref{eq:de bruijn}, where the factor $\frac{1}{2}$ comes from the definition \ref{def:entropy}.
\end{proof}

From the definitions of $\Gamma(X)$ and $\Psi(X)$, we have the following scaling property.
\begin{proposition}
    Let $X$ be a self-adjoint operator of a non-commutative probability space $(\mathcal{M},\tau)$ with law $\mu\in\mathcal{M}^{sym}(\R)$, and let $\lambda\in \mathbb{R}\setminus\{0\}$.  
    We have $$\Gamma(\lambda X) = \Gamma(X) + \log |\lambda|;\ \Psi(\lambda X) =\lambda^{-2} \Psi(X).$$
    The scaling property still holds if we extend the definition of $\Psi$ to $\probmr$ without altering the expression. However, the translation invariance of $\Psi$ fails.
\end{proposition}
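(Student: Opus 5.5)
The plan is a direct change of variables in the defining integrals, since the law of $\lambda X$ is the push-forward of $\mu$ by $x\mapsto \lambda x$. For the entropy this gives
\[
\Gamma(\lambda X)=\int \log|\lambda x|\,d\mu(x)=\log|\lambda|+\int\log|x|\,d\mu(x)=\Gamma(X)+\log|\lambda| .
\]
This also covers the case $\Gamma(X)=-\infty$, because adding a finite constant preserves $-\infty$. The integrability condition $\int_1^{\infty}\log x\,d\mu^{(2)}<\infty$ is preserved too, since $\log(\lambda^2x)$ differs from $\log x$ by a constant.

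For the Fisher information, substituting $x\mapsto\lambda x$, $y\mapsto\lambda y$ in the double integral gives the integrand
\[
\frac{\log(\lambda^2x^2)-\log(\lambda^2y^2)}{\lambda^2x^2-\lambda^2y^2}=\lambda^{-2}\,\frac{\log x^2-\log y^2}{x^2-y^2},
\]
because the $\log\lambda^2$ terms cancel in the numerator. The continuous extension on the diagonal, $1/x^2$, scales the same way. This extension is needed because the definition integrates the diagonal too, so the atoms of $\mu$ matter. Points $x=0$ give value $+\infty$ on both sides consistently, and nonnegativity of the integrand makes the identity valid in $[0,+\infty]$. Symmetry of the law of $\lambda X$ is clear, so $\lambda X$ lies in the same class and the identity $\Psi(\lambda X)=\lambda^{-2}\Psi(X)$ follows.

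For the nonsymmetric extension $\iint \frac{\log x^2/x-\log y^2/y}{x-y}\,d\mu\,d\mu$, the same substitution gives
\[
\frac{\frac{\log\lambda^2+\log x^2}{\lambda x}-\frac{\log\lambda^2+\log y^2}{\lambda y}}{\lambda(x-y)}
=\lambda^{-2}\,\frac{\frac{\log x^2}{x}-\frac{\log y^2}{y}}{x-y}+\lambda^{-2}\log\lambda^2\,\frac{\frac1x-\frac1y}{x-y}.
\]
The extra term integrates to $-\lambda^{-2}\log\lambda^2\,\bigl(\int x^{-1}d\mu\bigr)^2$. This vanishes in the symmetric case, where $\int x^{-1}\,d\mu=0$ as a principal value, but not in general. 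So scaling can genuinely fail for that formula. This matches the remark before the proof of Theorem~\ref{thm:de bruijn}, and I would mention it to reconcile it with the statement.

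The statement's phrase ``extend $\Psi$ to $\probmr$ without altering the expression'' refers to the symmetric formula. That formula depends only on $\mu^{(2)}$, so the computation above applies verbatim to it.

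To show that translation invariance fails, I would give one explicit example: $\mu=\mathrm{b}$, translated by $m$. Here $\Psi(\mathrm{b})=\iint \frac{\log x^2-\log y^2}{x^2-y^2}\,d\mathrm{b}\,d\mathrm{b}=1$, since all points have $x^2=1$. After translating by $m\neq0$, with $a=(1+m)^2$ and $b=(1-m)^2$,
\[
\Psi=\tfrac14\Bigl(\tfrac1a+\tfrac1b\Bigr)+\tfrac12\,\frac{\log a-\log b}{a-b}.
\]
This differs from $1$, for instance for small $m$ where the second-order terms do not cancel, or for $m=1$ where it is $+\infty$.

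The main obstacle is only keeping the extended-value conventions straight ($\pm\infty$ and atoms at $0$); everything else is direct computation.
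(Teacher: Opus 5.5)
Your proposal is correct and matches the paper's approach: the paper gives no separate proof and just says both identities follow from the definitions via the push-forward $x\mapsto\lambda x$, which is exactly your computation. The paper asserts without example that translation invariance fails; your translated-Rademacher computation supplies that example and is correct, since expanding gives $\Psi = 1+\frac{5}{3}m^2+O(m^4)$ for small $m\neq 0$ and $\Psi=+\infty$ at $m=1$.
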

    Now let us consider the variables $X_t= e^{-t}X+\sqrt{1-e^{-2t}}B$, which has the same distribution as the variables in the Boolean Ornstein-Uhlenbeck process. By the scaling property of $\Gamma$, we know that
	\begin{equation*}
		\Gamma(X_t)=\Gamma\left(X+\sqrt{e^{2t}-1}B\right)-t.
	\end{equation*}
	Thus, with the de Bruijn identity \eqref{eq:de bruijn} and the scaling property of $\Psi$, we have
	\begin{equation}\label{eq:fisher ou}
		\frac{d}{dt}\Gamma(X_t)=e^{2t}\Psi\left(X+\sqrt{e^{2t}-1}B\right)-1= \Psi(X_t)-1.
	\end{equation}

	\begin{corollary}\label{cor:de bruijn_0^infty}
		Suppose that $\Gamma(X+\sqrt{s}B)<\infty$ and $ \Psi(X+\sqrt{s}B)<+\infty$ for all $s\in[0,+\infty)$. Then we have
		\begin{equation}\label{eq:de bruijn_0^infty}
			\Gamma(X)=\frac{1}{2} \int_0^\infty \left( \frac{1}{s+1}-\Psi(X+\sqrt{s}B) \right)\,ds
		\end{equation}
	\end{corollary}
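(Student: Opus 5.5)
The plan is to apply the de Bruijn identity (Theorem~\ref{thm:de bruijn}) on $[0,T]$ and then let $T\to\infty$, using the scaling property to identify the limit of $\Gamma(X+\sqrt{T}B)$. First I rewrite the identity at time $T$ as
\begin{equation*}
\Gamma(X)=\Gamma(X+\sqrt{T}B)-\frac12\int_0^T\Psi(X+\sqrt{s}B)\,ds
=\Gamma(X+\sqrt{T}B)-\frac12\log(1+T)+\frac12\int_0^T\Big(\frac{1}{s+1}-\Psi(X+\sqrt{s}B)\Big)ds .
\end{equation*}
It then suffices to show that $\Gamma(X+\sqrt{T}B)-\frac12\log(1+T)\to 0$ as $T\to\infty$. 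The integral on the right then converges, as an improper integral, to the right-hand side of \eqref{eq:de bruijn_0^infty}.

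For that limit I use the scaling property: $\Gamma(X+\sqrt{T}B)=\Gamma\big((1+T)^{-1/2}(X+\sqrt{T}B)\big)+\frac12\log(1+T)$. So I need $\Gamma(Y_T)\to 0$, where $Y_T=(1+T)^{-1/2}X+\sqrt{T/(1+T)}\,B$. By \eqref{eq:square}, $Y_T^2$ has the law of $\frac{1}{1+T}X^2+\frac{T}{1+T}B^2$ with $X^2,B^2$ Boolean independent. Since $B^2=1$ in law, \eqref{eq:Cauchytimet} gives the Cauchy transform explicitly. Writing $a=\frac{1}{1+T}$ and $c=\frac{T}{1+T}$, and setting $g(z)=aG_{X^2}(z)$ (the Cauchy transform of $aX^2$ evaluated appropriately), we get $G_{Y_T^2}(z)=\dfrac{G_{aX^2}(z)}{1-cG_{aX^2}(z)}$. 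Because $X$ is bounded, the law of $aX^2$ converges to $\delta_0$ with support shrinking to $0$. Hence $G_{Y_T^2}\to 1/(z-1)$ locally uniformly off $[0,2]$, so $\mathcal{L}(Y_T^2)\to\delta_1$ weakly, with supports uniformly bounded (the operator norm of $Y_T$ is at most $\|X\|+1$).

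The main obstacle is the logarithmic singularity at $0$: weak convergence to $\delta_1$ only yields $\limsup\Gamma(Y_T)\le 0$, by upper semicontinuity of $\int\log|x|$ on uniformly bounded supports. It does not give the lower bound. For that, I would describe $\mathcal{L}(Y_T^2)$ concretely, either by solving $1/G_{aX^2}(z)=c+1/G_{Y_T^2}(z)$ or by computing the mass near $0$. The measure has an atom or mass near $0$ of size controlled by the mass of $X^2$ near $0$ scaled by the $K$-transform. My first attempt would be to show directly that the mass of $\mathcal{L}(Y_T^2)$ in $[0,\eta]$ is $O(a)$, uniformly for small $\eta$, and that $\int_{[0,\eta]}\log x\,d\mathcal{L}(Y_T^2)$ is bounded by a multiple of $a$ times something finite.

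Should that route be messy, the fallback is to use the hypothesis: finiteness of $\Psi$ and $\Gamma$ along the whole flow, together with monotonicity of $\Gamma(X+\sqrt{s}B)$, gives enough integrability. It would then suffice to show $\liminf_T \Gamma(Y_T)\ge 0$ via the bound $\Psi(\nu)\ge 1/m_2(\nu)$ (a Cramér–Rao-type estimate obtained by Jensen/convexity of the log-mean). With $m_2(X+\sqrt sB)=m_2(X)+s$, this makes the integrand bounded above by $\frac{1}{s+1}-\frac{1}{m_2(X)+s}=O(s^{-2})$. That bound guarantees the improper integral converges (possibly to $-\infty$ from the lower side only). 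Combined with the representation above, it forces $\lim\Gamma(Y_T)$ to exist, and the weak convergence to $\delta_1$ then identifies the limit as $0$.
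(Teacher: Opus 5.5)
Your reduction is the same as the paper's. De Bruijn on $[0,T]$ plus scaling is exactly the Ornstein--Uhlenbeck reparametrization $s=e^{2t}-1$ used there: $Y_T$ is $X_t$ with $e^{-2t}=1/(1+T)$. Everything then rests on $\Gamma(Y_T)\to\Gamma(B)=0$, which the paper simply declares clear.

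You are right that weak convergence of $\rho_T:=\mathcal{L}(Y_T^2)$ to $\delta_1$ gives only $\limsup_T\Gamma(Y_T)\le 0$. However, neither of your routes delivers the lower bound.

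\emph{First route.} It is only announced, and the mass of $\rho_T$ in $[0,\eta]$ is not the relevant quantity. A mass $\varepsilon$ placed at $e^{-1/\varepsilon}$ contributes $-\tfrac12$ to $\Gamma$ however small $\varepsilon$ is. What must be controlled is how close to $0$ that mass sits.

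\emph{Fallback.} It fails at its last step. Cramér--Rao shows that the positive part of $h(s)=\frac{1}{s+1}-\Psi(X+\sqrt{s}B)$ is integrable. Hence $\lim_T\Gamma(Y_T)=\Gamma(X)-\frac12\lim_T\int_0^T h$ exists in $(-\infty,0]$. Quantitatively, you only get $\liminf_T\Gamma(Y_T)\ge\Gamma(X)-\frac12\int_0^\infty h^+$. Concluding that weak convergence ``identifies the limit as $0$'' is the very semicontinuity problem you had already flagged: existence of the limit does not turn upper semicontinuity into continuity.

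What is missing is a negative-moment bound.
By Jensen (concavity of $\log$ applied to $1/y$), $\Gamma(Y_T)=\frac12\int\log y\,d\rho_T(y)\ge-\frac12\log\int y^{-1}\,d\rho_T(y)$.
Since $\rho_T$ lives on $[0,\infty)$, monotone convergence gives $\int y^{-1}\,d\rho_T=-\lim_{z\to0^-}G_{\rho_T}(z)$.
The relation \eqref{eq:Cauchytimet} extends analytically to $z<0$, where $G_{X^2}(z)<0$ and so $1-TG_{X^2}(z)\ge 1$.
Combining this with scaling, exactly as in the proof of Theorem~\ref{entfisher*}, gives $\int y^{-1}\,d\rho_T=\frac{(1+T)m}{1+Tm}$ with $m=\int x^{-2}\,d\mu(x)\in(0,+\infty]$. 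This is read as $\frac{1+T}{T}$ when $m=+\infty$, and in either case it tends to $1$ as $T\to\infty$.
Hence $\liminf_T\Gamma(Y_T)\ge0$, so $\Gamma(Y_T)\to0$. This completes your argument and also justifies the paper's ``clearly''.
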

	\begin{proof}
		By the previous computation, and clearly $\lim_{t\to 0}\Gamma(X_t)=\Gamma(X),\ \lim_{t\to+\infty}\Gamma(X_t)=\Gamma(B)$, thus we have
		\begin{align*}
			\Gamma(B)-\Gamma(X) &= \int_0^{+\infty} \left[e^{2t}\Psi\left(X+\sqrt{e^{2t}-1}B\right)-1 \right]\,dt\\
			&= \int_0^{+\infty} \left[ (s+1)\Psi(X+\sqrt{s}B)-1 \right]\frac{1}{2(s+1)}\,ds,
		\end{align*}
		where the second equality is derived from change of variable $s\mapsto \sqrt{e^{2t}-1}$. Therefore, with the fact that $\Gamma(B)=0$, we complete the proof.
	\end{proof}

    The monotonicity of classical entropy originates in the work of Shannon (see \cite{shannon1948}, \cite{stam1959}, \cite{lieb1978}) and was settled in \cite{artstein2004}. Adapting a similar strategy, Shlyakhtenko established the free counterpart (see \cite{shlyakhtenko2007}). This result was later revisited by Dadoun and Youssef (see \cite{dadoun2021}) through the maximal correlation method. We emphasize that the proofs of all these results are through the monotonicity of Fisher information, and the key is the $L^2$-structure of the Fisher information (i.e. the conjugate variable). In \cite{pan2025entropy}, we established the Boolean analogue of this monotonicity phenomenon. However, the argument does not involve the Fisher information, but instead relies purely on the analytical structure of the Boolean convolution. For the sake of completeness, we now turn to the monotonicity of the Fisher information itself, but as the quantity $\Psi$ does not yields a nice $L^2$-structure, the way we prove the monotonicity shall be different from the classical and free case.
    \begin{theorem}[Monotonicity of the Fisher information]\label{thm:monofisher}
        Let $\{X_i\}_{i\in \N}\subset (\mathcal{M},\tau)$ be a sequence of Boolean independent and identically distributed self-adjoint random variables with law $\mu\in\probmsymr$.  Define
        \begin{equation*}
            S_n:=\frac{X_1+\cdots+X_n}{\sqrt{n}}.
        \end{equation*}
        Then, for any integers $n\ge m\ge 1$, we have
        \[
        \Psi(S_n)\le \Psi(S_m),
        \]
        with equality if and only if
        \[
        X_1\sim \tfrac{1}{2}\delta_{-a}+\tfrac{1}{2}\delta_a
        \]
        for some $a\neq 0$.
    \end{theorem}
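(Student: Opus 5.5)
The plan is to reduce everything to the push-forward $\nu:=\mu^{(2)}\in\mathcal{M}(\R_+)$ and to rewrite $\Psi$ as an integral of the squared Cauchy transform on the negative half-line. The starting point is the elementary identity, valid for $x,y>0$,
$$\frac{\log x-\log y}{x-y}=\int_0^\infty\frac{ds}{(x+s)(y+s)}.$$
By Tonelli it gives
$$\Psi(\mu)=\int_0^\infty G_\nu(-s)^2\,ds .$$
We then set
$$\varphi_\nu(w):=-\frac{1}{G_\nu(-w)}=\Bigl(\int\frac{d\nu(x)}{w+x}\Bigr)^{-1},\qquad w>0,$$
so that $\Psi(\mu)=\int_0^\infty\varphi_\nu(s)^{-2}\,ds$.

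Next we compute $\varphi$ for the law of $S_n^2$. By \eqref{eq:square} and Boolean independence, $\mathcal{L}(S_n^2)=\nu_n$, the dilation of $\nu^{\uplus n}$ by $1/n$. From $K_{\nu^{\uplus n}}=nK_\nu$ and the dilation rule $K_{D_c\rho}(z)=cK_\rho(z/c)$ we get $K_{\nu_n}(z)=K_\nu(nz)$. Writing $\psi(w):=\varphi_\nu(w)-w=K_\nu(-w)$, this becomes
$$\varphi_{\nu_n}(s)=s+\psi(ns).$$
Hence
$$\Psi(S_n)=\int_0^\infty\frac{ds}{(s+\psi(ns))^2}.$$
The substitution $u=ns$ and then $u=v/\theta$ with $\theta=1/n$ give
$$\Psi(S_n)=\int_0^\infty\frac{dv}{\bigl(v+\psi(nv)\bigr)^2}.$$
(Sanity check: for $\nu=\delta_{a^2}$ we have $\psi\equiv a^2$, and $\Psi(S_n)$ does not depend on $n$.)

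So monotonicity reduces to showing that $\psi$ is nonnegative and nondecreasing on $(0,\infty)$. Then for $n\ge m$ the integrand for $n$ is pointwise at most the one for $m$.

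\begin{itemize}
\item \emph{Nonnegativity.} Since $\nu$ lives on $\R_+$, we have $\int (w+x)^{-1}d\nu\le 1/w$, so $\varphi_\nu(w)\ge w$.
\item \emph{Monotonicity via concavity.} $\varphi_\nu$ is the harmonic mean of the affine positive functions $w\mapsto w+x$, hence concave. This follows, for instance, from Cauchy--Schwarz in the form $\bigl(\int f'\bigr)^2\le \int f\int f''/2$-type inequalities on $f(w)=\int(w+x)^{-1}d\nu$, i.e.\ checking that $(1/f)''\le 0$. Alternatively, one can use the Nevanlinna representation $K_\nu(z)=m_1(\nu)+\int\frac{d\rho(t)}{z-t}$ with $\rho$ supported in $\R_+$. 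Thus $\psi$ is concave, and it is bounded above by $m_2(\mu)$ by Jensen ($\varphi_\nu(w)\le w+m_1(\nu)$) when the second moment is finite. More generally, $\psi$ is concave and $\ge0$ on $(0,\infty)$. A concave function on $(0,\infty)$ that is bounded below cannot decrease anywhere without tending to $-\infty$, so $\psi$ is nondecreasing.
\end{itemize}

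For the equality case, suppose $\Psi(S_n)=\Psi(S_m)<\infty$ with $n>m$. Then $\psi(nv)=\psi(mv)$ for a.e.\ $v$, hence for all $v$ by continuity. A nondecreasing function that agrees at $mv$ and $nv$ is constant on $[mv,nv]$. These intervals cover $(0,\infty)$, so $\psi\equiv c$ is constant. Then $G_\nu(-w)=-1/(w+c)$, and analytic continuation gives $\nu=\delta_c$. We need $c>0$, since $c=0$ would give $\Psi=\infty$. Therefore $\mu=\tfrac12\delta_{-a}+\tfrac12\delta_a$ with $a=\sqrt c$. The converse was checked above.

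The main obstacle I expect is the bookkeeping at the boundary: justifying the integral representation and the Cauchy-transform identities when $\nu$ has an atom at $0$ or $\Psi=+\infty$ (where inequalities are trivial), and making the concavity of $\varphi_\nu$ rigorous. For the latter I would first try the direct second-derivative computation, which reduces to $2f'^2\le f f''$, i.e.
$$2\Bigl(\int (w+x)^{-2}d\nu\Bigr)^2\le 2\int (w+x)^{-1}d\nu\int (w+x)^{-3}d\nu,$$
which is Cauchy--Schwarz.
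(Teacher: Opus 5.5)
Your proof is correct, and it takes a genuinely different and shorter route than the paper's.

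\textbf{The paper's route.} The paper first uses a two-step swapping argument. It reduces the claim to showing that the one-variable functional $\int \frac{\log x}{x-1}\,d\mu^{(2)}_t(x)$ is nonincreasing along the continuous extension $G_{\mu^{(2)}_t}(z)=1/(z-K_{X_1^2}(tz))$. It then differentiates in $t$, using Poisson-kernel regularisation, truncations $g_K$ and monotone-convergence limits, and obtains
$\frac{d}{dt}\int\frac{\log x}{x-1}\,d\mu^{(2)}_t=\frac1t\iint\bigl(\frac{L(x)-L(y)}{x-y}-L'(x)\bigr)\,d\mu^{(2)}_t\,d\mu^{(2)}_t$ with $L(x)=x\log x/(x-1)$. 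It concludes from the convexity of $L'$ (that is, $L'''\ge 0$) and Jensen.

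\textbf{Your route.} You use the representation $\Psi=\int_0^\infty G_\nu(-s)^2\,ds$, which the paper only invokes later for Blachman--Stam. You combine it with the exact formula $-1/G_{\nu_n}(-s)=s+K_\nu(-ns)$. You then show that $w\mapsto K_\nu(-w)$ is nondecreasing on $(0,\infty)$, using concavity of the harmonic mean (via Cauchy--Schwarz) together with $K_\nu(-w)\ge 0$.

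\textbf{What your route buys.} You get pointwise domination of the integrands. There is no differentiation in $t$ and no exchange of limits, and the infinite case is automatic by Tonelli. The argument applies verbatim to the real-parameter family above, so it gives directly the paper's inequality $\Psi(\mu^{(2)}_t)\le\Psi(\mu^{(2)}_s)$ for $t\ge s\ge 1$, which is needed later. Your equality argument is also more transparent than the paper's appeal to strict convexity: $\psi(x)=\psi(\tfrac nm x)$ together with monotonicity forces $\psi$ to be constant, hence $\nu=\delta_c$.

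\textbf{Two small points.}
\begin{itemize}
\item State explicitly why $K_{\nu_n}(z)=K_\nu(nz)$ extends from $\mathbb{H}_+$ to $(-\infty,0)$: $G_\nu$ is analytic and nonvanishing on the connected set $\mathbb{C}\setminus[0,\infty)$, so the identity theorem applies.
\item Your substitution $u=ns$, $u=v/\theta$ returns the same formula and can be dropped.
\end{itemize}

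\textbf{The equality case.} Restricting to $\Psi(S_m)<\infty$ is necessary, not merely convenient, and you should say so rather than leave the case $+\infty=+\infty$ unmentioned. Concavity of $\psi$ with $\psi(0^+)\ge 0$ gives $\psi(ns)\le\frac nm\psi(ms)$. Hence $\Psi(S_n)\ge (m/n)^2\Psi(S_m)$, so whether $\Psi(S_n)$ is finite does not depend on $n$.

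For example, take $\mu=\frac12\delta_0+\frac14\delta_{-1}+\frac14\delta_1$. Then $K_{\mu^{(2)}}(z)=\frac{z}{2z-1}$, and one computes $\mu_n^{(2)}=\frac{1}{n+1}\delta_0+\frac{n}{n+1}\delta_{(n+1)/(2n)}$. So $\Psi(S_n)=+\infty$ for every $n$, although $\mu$ is not a scaled Rademacher law. The ``only if'' clause of the statement therefore holds only under $\Psi(S_m)<\infty$, which is exactly the case your argument treats.
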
    
    \begin{proof}Since the
variables are self-adjoint elements of a von Neumann algebra, they are bounded operators and their spectral measures are compactly supported. 
        We begin with the monotonicity of $\Psi$. Thanks to \eqref{eq:square}, we have 
        \[
        S_n^2 \stackrel{d}{=} \frac{X_1^2+\cdots+X_n^2}{n}.
        \]
        Recall that $\mu_n$ is the dilation of $\mu^{\uplus n}$ by $\frac{1}{\sqrt{n}}$, thus we denote by $\mu^{(2)}_n$ the law of $S_n^2$. Via the change of variable $x\mapsto x^2$, one can reformulate the Fisher information as follows:
        \begin{equation*}
	        \Psi(S_n) = \iint \frac{\log x-\log y}{x-y}\,d\mu^{(2)}_n(x)d\mu^{(2)}_n(y) = \int \frac{1}{x}\left(\int \frac{\log (y/x)}{y/x-1}\,d\mu^{(2)}_n(y)\right)\,d\mu^{(2)}_n(x).
        \end{equation*}
        It therefore suffices to show that for any $X_1\sim\mu\in\probmsymr$ and any $n\ge m$,
        \begin{equation}\label{ineq:monofisher integer}
	          \int \frac{\log x}{x-1}\,d\mu^{(2)}_n(x) \le \int \frac{\log x}{x-1}\,d\mu^{(2)}_m(x).
        \end{equation}
        Indeed, the above inequality (if true for any symmetric law, and in particular for $X_1/\sqrt{x}$ with fixed $x>0$) implies that we have
        $$          \int \frac{\log y/x}{y/x-1}\,d\mu^{(2)}_n(y) \le \int \frac{\log y/x}{y/x-1}\,d\mu^{(2)}_m(y), $$
and allows to write   
        \begin{align*}
            &\Psi(S_n)=\iint \frac{\log x-\log y}{x-y}\,d\mu^{(2)}_n(x)d\mu^{(2)}_n(y) =\int \frac{1}{x}\left(\int \frac{\log (y/x)}{y/x-1}\,d\mu^{(2)}_n(y)\right)\,d\mu^{(2)}_n(x)\\
            &\le \int \frac{1}{x}\left(\int \frac{\log (y/x)}{y/x-1}\,d\mu^{(2)}_m(y)\right)\,d\mu^{(2)}_n(x)=\int \frac{1}{y}\left(\int \frac{\log (x/y)}{x/y-1}\,d\mu^{(2)}_n(x)\right)\,d\mu^{(2)}_m(y)\\
            &\leq \int \frac{1}{y}\left(\int \frac{\log (x/y)}{x/y-1}\,d\mu^{(2)}_m(x)\right)\,d\mu^{(2)}_m(y)
            = \iint \frac{\log x-\log y}{x-y}\,d\mu^{(2)}_m(x)d\mu^{(2)}_m(y) = \Psi(S_m).
        \end{align*}
        
        To establish \eqref{ineq:monofisher integer}, we adopt the same approach as in the proof of the monotonicity of Boolean entropy (Proposition~4.2 in \cite{pan2025entropy}). In view of the definition of $\mu^{(2)}_n$, recall that for any $\alpha>0$, it can be checked that $G_{\alpha X}(z) = \alpha^{-1}G_X(\alpha^{-1}z)$ and $K_{\alpha X}(z)=\alpha K_X(\alpha^{-1}z)$, then by the additivity of the self energy $K$ under Boolean convolution, the Cauchy transform of $\mu^{(2)}_n$ is given by
        \begin{equation*}
            G_{\mu^{(2)}_n}(z)=\frac{1}{z-K_{X_1^2}(nz)}.
        \end{equation*}
        Observe that the discrete family $(\mu^{(2)}_n)_{n\in\mathbb{N}} \subset \mathcal{M}(\R_+)$ admits a natural extension to a continuous family $(\mu^{(2)}_t)_{t\ge 1} \subset \mathcal{M}(\R_+)$, 
defined by

        \begin{equation}\label{def:coninuous extension}
            G_{\mu^{(2)}_t}(z)=\frac{1}{z-K_{X_1^2}(tz)},
        \end{equation}
        and we aim to show a stronger version of the monotonicity:
        \begin{equation}\label{ineq:monofisher real}
            \Psi(\mu^{(2)}_t)\le \Psi(\mu^{(2)}_s),\ \forall t\ge s\ge 1.
        \end{equation}
        In fact, it follows directly from the definition that $\mu^{(2)}_t$ 
        satisfies the multiplicative semigroup property: $(\mu^{(2)}_t)_s = \mu^{(2)}_{ts}$. Consequently, if $\Psi(\mu^{(2)}_t)=+\infty$ for all $t \ge 1$, the inequality is trivial. Otherwise, suppose that there exists $t_0 \in [1,+\infty)$ such that $\Psi(\mu^{(2)}_{t_0}) < +\infty$, while $\Psi(\mu^{(2)}_{s}) = +\infty$ for all $s \in [1,t_0)$. By the semigroup property, we may shift the parameter and assume without loss of generality that $t_0 = 1$, that is, $\Psi(X) < +\infty$. Moreover, to establish the monotonicity for all $t \ge 1$, it suffices to verify the negativity of the derivative with respect to $t$ for an arbitrary $X_1 \sim \mu \in \probmsymr$,
        \begin{equation*}
            \frac{d}{d t} \int \frac{\log x}{x-1}\,d\mu^{(2)}_t(x)\le 0.
        \end{equation*}
        To proceed, let $z=x+i\epsilon$ and write $G_{\mu^{(2)}_t}(z)=\pi\bigl(u_t(z)-iv_t(z)\bigr)$. Taking partial derivative with respect to $t$ on both sides of $\eqref{def:coninuous extension}$ at $t=1$, we obtain
        \begin{equation*}
            \frac{\partial}{\partial t}\Big|_{t=1}G_{\mu^{(2)}_t}(z) = z(G_{\mu^{(2)}_1}(z))^2+zG_{\mu^{(2)}_1}'(z),
        \end{equation*}
        which gives
        \begin{equation*}
            \frac{\partial}{\partial t}\Big|_{t=1}v_t(z) = x[2\pi u_1(z)v_1(z)+v_1'(z)] -\epsilon[u'_1(z)+\pi(u_1^2(z)-v_1^2(z))].
        \end{equation*}
        Let $L(x)=(x\log x)/(x-1)$, it can be checked that $L$ is monotone increasing with $\lim_{x\to 0^+}L(x)=0$ and $\lim_{x\to +\infty}L(x)=+\infty$. Moreover, the singularity of $L$ at $x=1$ can be removed so that $L\in C^1(\R_+)$. In particular, $L$ is bounded in any compact set. For sufficiently large $K>0$, we denote $g_K(x)=\operatorname{sign}(\log x) \cdot K\wedge|\log x|/(x-1)$ and $L_K(x)=xg_K(x)$. Note that $|\frac{\partial}{\partial t}|_{t=1}v_t(z)|\le |z(G_{\mu_1^{(2)}}(z))^2|+|zG'_{\mu_1^{(2)}}(z)|\le \frac{c_\epsilon}{1+|x|}$ because $\mu^{(2)}_1$ is compactly supported, hence we have
        \begin{align*}
            \frac{d}{d t}\Big|_{t=1} \int g_K(x)v_t(x+i\epsilon)\,dx
            &= \int g_K(x) \frac{\partial}{\partial t}\Big|_{t=1}v_t(x+i\epsilon)\,dx \\
            &= \int L_K(x)[2\pi u_1(x+i\epsilon)v_1(x+i\epsilon)+v_1'(x+i\epsilon)]\,dx +o(\epsilon) \\
            &= \iint\left(\frac{P_\epsilon\ast L_K(x)-P_\epsilon\ast L_K(y)}{x-y}\right)\,d\mu^{(2)}_1(x)d\mu^{(2)}_1(y)\\
            &\quad - \int L'_K(x)v_1(x+i\epsilon)\,dx + o(\epsilon).\\
            &:= J_K(v_1(\cdot+i\epsilon))
        \end{align*}
        Therefore, by the multiplicative semigroup property of $\{\mu^{(2)}_t\}_{t\ge 1}$, we have
        \begin{equation*}
            \frac{d}{d t} \int g_K(x)v_t(x+i\epsilon)\,dx = \frac{1}{t} J_K(v_t(\cdot+i\epsilon)),
        \end{equation*}
        and consequently
        \begin{equation*}
            \int g_K(x)v_t(x+i\epsilon)\,dx - \int g_K(x)v_0(x+i\epsilon)\,dx = \int_0^t \frac{1}{s}J_K(v_s(\cdot+i\epsilon))\,ds.
        \end{equation*}
        Note that as $\epsilon\to 0^+$, the weakly convergence of $v_s(\cdot+i\epsilon)$ to $\mu_s$ and the uniform convergence of $P_\epsilon\ast L_K$ to $L_K$ on compact sets yields
        \begin{equation*}
            \int g_K(x)\,d\mu_t(x) - \int g_K(x)\,d\mu_0(x) = \int_0^t \frac{1}{s}\iint\left(\frac{L_K(x)- L_K(y)}{x-y} - L'_K(x)\right)\,d\mu^{(2)}_s(x)d\mu^{(2)}_s(y)\,ds.
        \end{equation*}
        Finally, let $K\to +\infty$ along with the monotone convergence theorem, we conclude that
        \begin{equation*}
            \int \frac{\log x}{x-1}\,d\mu_t(x) - \int \frac{\log x}{x-1}\,d\mu_0(x) = \int_0^t \frac{1}{s}\iint\left(\frac{L(x)- L(y)}{x-y} - L'(x)\right)\,d\mu^{(2)}_s(x)d\mu^{(2)}_s(y)\,ds
        \end{equation*}
        so that
        \begin{equation}
            \frac{d}{d t} \int \frac{\log x}{x-1}\,d\mu^{(2)}_t(x) = \frac{1}{t}\iint\left(\frac{L(x)- L(y)}{x-y} - L'(x)\right)d\mu^{(2)}_t(x)d\mu^{(2)}_t(y).
        \end{equation}
        Since
        \begin{equation*}
            \frac{L(x)-L(y)}{x-y} =\int_0^1 L'(\theta x+(1-\theta)y)\,d\theta.
        \end{equation*}
        The proof is immediate if $L'$ is convex on $(0,\infty)$. Hence, it remains to verify the convexity of $L'$. A direct computation yields
        \begin{equation*}
            L'''(x)=\frac{2\bigl[(x-1)-\log x\bigr]}{x(x-1)^4}\ge 0.
        \end{equation*}
        Since $L'(x)$ admits a continuous extension at $x=1$, we conclude that $L'$ is convex on $(0,+\infty)$. Moreover, the equality holds if and only if $\mu^{(2)}_t$ is a Dirac mass $\delta_a$ for some $a\in(0,+\infty)$. In view of the definition of $\mu^{(2)}_t$, it is clear that if $\mu^{(2)}$ is not a one point Dirac mass, then $\mu^{(2)}_t$ can not be a one point Dirac mass for all $t$. Therefore, $\Psi(S_n)=\Psi(S_m)$ for some $n\ne m\in \N$ if and only if $\mu$ is a scaled Rademacher distribution. This completes the proof of the monotonicity of the Fisher information $\Psi$.
    \end{proof}
    The monotonicity of the entropy $\Gamma$ then follows from Corollary~\ref{cor:de bruijn_0^infty} together with the divisibility of the Rademacher distribution with respect to the Boolean convolution. Therefore, in particular, the entropy $\Gamma$ is non-decreasing along the Boolean central limit theorem. This result was already established in Proposition 4.2 of our first paper \cite{pan2025entropy}, where we do not require $\mu$ to be symmetric. Moreover, it is worth pointing out that, although we only treated the case when $\mu$ is centered and has variance $1$ in \cite{pan2025entropy}, the argument remains valid without these assumptions. Indeed, the centering and variance assumptions are used solely to ensure that $\Gamma(\mu^{(2)}_n)\longrightarrow \Gamma(\mathbb{\mathrm{b}})$. As a consequence, the monotonicity of $\Gamma$ holds in full generality. Thus we state the following theorem and omit the proof. 
    \begin{theorem}[Monotonicity of entropy]
        For a sequence of self-adjoint random variables $\{X_i\}_{i\in \N}$ that are Boolean independent and identically distributed, we have for any integers $n\ge m\ge 1$,
        \begin{equation}
            \Gamma(S_n)\ge \Gamma(S_m).
        \end{equation}
    \end{theorem}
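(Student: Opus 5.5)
The plan is to follow the route indicated just before the statement: deduce monotonicity of $\Gamma$ from Theorem~\ref{thm:monofisher} via the integral representation of Corollary~\ref{cor:de bruijn_0^infty}. The divisibility of $\mathrm{b}$ under $\uplus$ lets us realize the heat flow consistently along the CLT.

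\textbf{Symmetric case.} Take $B_1,\dots,B_n$ Boolean independent copies of $\mathrm{b}$, Boolean independent from the $X_i$. Then
\[
\frac{B_1+\cdots+B_n}{\sqrt n}\sim \mathrm{b}^{\uplus n}\circ D_{1/\sqrt n},
\]
and its $K$-transform is $K_{\mathrm b}$. Indeed $K_{\mathrm b}(z)=1/z$, and scaling gives $n\cdot\frac{1}{\sqrt n}\cdot\frac{1}{\sqrt n\, z/\sqrt n}\cdot\ldots=1/z$, i.e. $\mathrm{b}$ is stable. Hence
\[
S_n+\sqrt{s}B\stackrel{d}{=}\frac{1}{\sqrt n}\sum_{i=1}^n\bigl(X_i+\sqrt{s}B_i\bigr).
\]
The variables $Y_i:=X_i+\sqrt s B_i$ are again Boolean i.i.d. and symmetric. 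This uses associativity and commutativity of $\uplus$, i.e. additivity of $K$. Theorem~\ref{thm:monofisher} applied to the $Y_i$ then gives
\[
\Psi(S_n+\sqrt sB)\le\Psi(S_m+\sqrt sB)\quad\text{for every } s\ge0.
\]
Plugging this into \eqref{eq:de bruijn_0^infty} yields $\Gamma(S_n)\ge\Gamma(S_m)$. The integrand $\frac{1}{s+1}-\Psi$ is larger for $S_n$, and both integrals make sense: $\Psi(S+\sqrt sB)\le \frac{1}{s}\cdot$const by the scaling property, or one uses the finite-$t$ de Bruijn identity \eqref{eq:de bruijn} and lets $t\to\infty$.

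\textbf{Hypotheses of Corollary~\ref{cor:de bruijn_0^infty}.} Three things must hold.
\begin{itemize}
\item $\Gamma<\infty$: clear, since the operators are bounded.
\item $\Psi(X+\sqrt sB)<\infty$ for $s>0$: by \eqref{eq:Cauchytimet}, $\mu^{(2)}(s)$ has $G=G/(1-sG)$. Its mass near $0$ is controlled because $1/G_{X^2}$ has imaginary part bounded below away from the atom structure. The cleanest way is to observe that $\Psi(\nu)\le \int x^{-1}d\nu\cdot C$ and that $\int x^{-1}d\mu^{(2)}(s)=-G_{\mu^{(2)}(s)}(0)=\frac{-G(0)}{1+sG(0)}$ is finite for $s>0$, with $G(0)\in[-\infty,0)$.
\item $s=0$ endpoint: if $\Psi(S_m)$ or $\Gamma(S_m)$ is infinite, the inequality is either trivial ($\Gamma(S_m)=-\infty$) or handled by applying the identity on $[\eta,\infty)$ and letting $\eta\to0$ by monotone convergence of $\Gamma_\delta$.
\end{itemize}

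\textbf{Non-symmetric case.} The argument above needs symmetry. For general $\mu$, I would instead reproduce the analytic argument of Theorem~\ref{thm:monofisher} directly on $\Gamma$. Use the continuous family
\[
G_{\mu_t}(z)=\frac{1}{z-\sqrt t\,K_{X_1}(\sqrt t z)}
\]
with its multiplicative semigroup property. Differentiating $\int\log|x|\,d\mu_t$ in $t$ through Poisson regularization $f_\delta=\log|\cdot+i\delta|$ expresses the derivative as $\frac1t$ times a double integral of a kernel of the form
\[
\frac{L(x)-L(y)}{x-y}-L'(x),\qquad L(x)=\tfrac12 x\cdot\tfrac{d}{dx}\log|x|\cdot x=\tfrac12.
\]
For the logarithm this becomes a Cauchy–Schwarz/positivity statement, as in \cite{pan2025entropy}, Prop.~4.2. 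The main obstacle is justifying the exchange of limits $\epsilon,\delta\to0$ and the sign of the resulting quadratic form without symmetry. Since this is exactly the content of \cite{pan2025entropy}, I would cite it there. The only new point is that the normalization assumptions there serve only to identify the limit $\Gamma(\mathrm b)$, which is not needed for comparing $n\ge m$.
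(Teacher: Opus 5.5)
Your proposal follows the paper's route: the symmetric case comes from Corollary~\ref{cor:de bruijn_0^infty}, the $\uplus$-stability of $\mathrm{b}$ and Theorem~\ref{thm:monofisher}, and your $\eta$-regularization supplies endpoint details the paper leaves implicit; the general case is handled by appeal to Proposition~4.2 of \cite{pan2025entropy}, with the same observation that the normalization there only serves to identify the limit. Two slips need fixing: from \eqref{eq:Cauchytimet} one gets $\int x^{-1}\,d\mu^{(2)}(s)=\frac{-G_{X^2}(0)}{1-sG_{X^2}(0)}=\frac{\Psi^*(X)}{1+s\Psi^*(X)}\le\frac{1}{s}$, whereas your denominator $1+sG(0)$ has the wrong sign and could vanish; and the displayed $L(x)=\tfrac12$ in your non-symmetric sketch is meaningless, since a constant $L$ makes the kernel vanish identically, so that case rests entirely on the citation, exactly as in the paper.
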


\subsection{Information-theoretic properties}\label{Sec2:booleanentropy}

	The Cramér–Rao inequality and Stam inequality (\cite{stam1959}) are fundamental in classical information theory. It turns out that these results are consistent with Voiculescu's free entropy and free Fisher information (\cite{voiculescu1998analogues}). Motivated by these parallels, we aim to establish corresponding inequalities in the Boolean setting. Before that, we shall point out some properties of the Boolean entropy and Boolean Fisher information, some of these properties will be used in subsequent sections.
	\begin{proposition}\label{prop:infor property}
	 	The Boolean entropy $\Gamma$ and Fisher information $\Psi$ satisfy the following properties  (where we assume that all appearing variables are self-adjoint in some non-commutative probability space and have symmetric laws):
	 	\begin{enumerate}
	 		\item We have the bound for $\Psi$ as follows:
	 		\begin{equation}\label{ineq:fisher bdd}
	 			\tau\left(\frac{2}{X^2+\tilde{X}^2}\right)\le \Psi(X)\le \tau\left( \frac{1}{X^2} \right),
	 		\end{equation}
	 		where $\tilde{X}$ is a classically independent copy of $X$. If $X$ is not invertible, the above formulas should be understood as $\tau\left(\frac{2}{X^2+\tilde{X}^2}\right)=\int 2/(x^2+y^2) d\mu(x)d\mu(y)$ and $\tau\left( \frac{1}{X^2} \right):=\int x^{-2} d\mu(x)$ with $\mu=\mathcal{L}(X)$.
	 		\item $\Gamma$ is upper semicontinuous with respect to the weak topology of probability measures with compact support, and $\Psi$ is lower semicontinuous. In particular, let $\{X_k\}_{k\in \N}$ be a sequence of random variables such that $X_k\to X$ in the sense of operator norm, then
            \begin{equation*}
                \Gamma(X)\ge \limsup_{k\to +\infty}\Gamma(X_k);\ \Psi(X)\le \liminf_{k\to +\infty}\Psi(X_k).
            \end{equation*}
	 		\item If $X$ and $Y$ are Boolean independent, then we have the Boolean analogue of Shannon-Stam inequality and Blachman-Stam inequality: that is, for any $\theta\in [0,1]$,
	 		\begin{align}
	 			&\Gamma(\sqrt{\theta}X + \sqrt{1-\theta}Y) \ge \theta\Gamma(X) + (1-\theta)\Gamma(Y)\label{ineq:SSI};\\
	 			&\Psi(\sqrt{\theta}X + \sqrt{1-\theta}Y)\le \theta\Psi(X) + (1-\theta)\Psi(Y)\label{ineq:BSI}.
	 		\end{align}
	 		
	 	\end{enumerate}
	\end{proposition}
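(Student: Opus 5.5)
The plan is to treat the three items separately. Items (1) and (2) reduce to elementary facts about the kernel $k(a,b)=\frac{\log a-\log b}{a-b}$ on $(0,\infty)^2$. Item (3) is the real content.

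\emph{Item (1).} Write $\Psi(X)=\iint k(a,b)\,d\mu^{(2)}(a)d\mu^{(2)}(b)$. Note that $1/k(a,b)$ is the logarithmic mean of $a,b$. The classical chain geometric mean $\le$ logarithmic mean $\le$ arithmetic mean, i.e.
$$\sqrt{ab}\le \frac{a-b}{\log a-\log b}\le \frac{a+b}{2},$$
gives the pointwise bounds $\frac{2}{a+b}\le k(a,b)\le \frac{1}{\sqrt{ab}}$.
\begin{itemize}
\item Integrating the lower bound against $\mu^{(2)}\otimes\mu^{(2)}$ gives exactly $\tau\bigl(\frac{2}{X^2+\tilde X^2}\bigr)$.
\item Integrating the upper bound gives $\bigl(\int |x|^{-1}d\mu\bigr)^2$, which is at most $\int x^{-2}d\mu$ by Cauchy--Schwarz.
\end{itemize}
Both sides may be infinite, since all integrands are nonnegative. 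Strict positivity of $\Psi$ also follows from the lower bound.

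\emph{Item (2).} Norm convergence $X_k\to X$ implies that the laws are supported in a common compact set $[-R,R]$ and converge weakly. The squared laws and their products then also converge weakly.
\begin{itemize}
\item For $\Gamma$, write $\log|x|=\inf_K \max(\log|x|,-K)$. Each truncation is continuous and bounded on $[-R,R]$, so $\mu\mapsto\int\max(\log|x|,-K)\,d\mu$ is continuous. An infimum of continuous functionals is upper semicontinuous.
\item For $\Psi$, the kernel $(x,y)\mapsto k(x^2,y^2)$ is continuous with values in $(0,+\infty]$, blowing up only near the origin. Write it as the increasing limit of $\min(k,K)$, which is continuous and bounded. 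Then $\Psi$ is a supremum of weakly continuous functionals of $\mu\otimes\mu$, hence lower semicontinuous.
\end{itemize}

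\emph{Item (3).} I would prove the Blachman--Stam inequality \eqref{ineq:BSI} first and deduce Shannon--Stam \eqref{ineq:SSI} from it through the de Bruijn identity, as in the classical theory.

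For the deduction, realize Boolean independent $X,Y,B_1,B_2$ on a Boolean product, with $B_1,B_2$ distributed according to $\mathrm b$. Set
$$Z=\sqrt\theta X+\sqrt{1-\theta}Y,\qquad B=\sqrt\theta B_1+\sqrt{1-\theta}B_2.$$
Then $Z+\sqrt sB=\sqrt\theta(X+\sqrt sB_1)+\sqrt{1-\theta}(Y+\sqrt sB_2)$. By \eqref{eq:square}, $B^2$ has law $\delta_{\theta+(1-\theta)}=\delta_1$, so $B\sim\mathrm b$ and $B$ is Boolean independent from $Z$. Applying \eqref{ineq:BSI} at every $s$ and inserting into Corollary~\ref{cor:de bruijn_0^infty}, the terms $\frac1{s+1}$ split as $\theta\frac1{s+1}+(1-\theta)\frac1{s+1}$, which yields \eqref{ineq:SSI}. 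The finiteness hypotheses of the corollary are handled in two steps:
\begin{itemize}
\item First apply the argument to $X+\sqrt{\eta}B'$ and $Y+\sqrt{\eta}B''$. These have $\Psi<\infty$, since their squared laws are bounded away from $0$ by \eqref{eq:Cauchytimet}.
\item Then let $\eta\to0$. The regularized variables converge to $X,Y$ in norm, so the semicontinuity of item (2) passes the inequality to the limit. The continuity of $\Gamma$ along this particular flow comes from the monotone convergence already used in Theorem~\ref{thm:de bruijn}.
\end{itemize}

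For \eqref{ineq:BSI} itself, I would follow the strategy of Theorem~\ref{thm:monofisher}, which already settles the case $X\overset{d}{=}Y$, $\theta=\frac12$. Put $A=X^2$ and $C=Y^2$. The law $\nu_\theta$ of $Z^2$ has self-energy
$$K_{\nu_\theta}(z)=\theta K_{A}(z/\theta)+(1-\theta)K_{C}(z/(1-\theta)).$$
As in that proof, it suffices to establish the one-variable inequality
$$\int \frac{\log x}{x-1}\,d\nu_\theta\le \theta\int\frac{\log x}{x-1}\,d\mu_A+(1-\theta)\int\frac{\log x}{x-1}\,d\mu_C$$
for all rescalings of $A$ and $C$. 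The two-variable statement then follows by the same two-step sandwich, integrating in one variable at a time.

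For the one-variable inequality, I would differentiate in $\theta$, or more flexibly along the two-parameter family $K=\alpha K_A(z/\alpha)+\beta K_C(z/\beta)$. The Poisson-regularization computation of Theorem~\ref{thm:monofisher} should express the derivatives as double integrals of $\frac{L(x)-L(y)}{x-y}-L'(x)$ against the relevant laws, where $L(x)=\frac{x\log x}{x-1}$. The convexity of $L'$, i.e.\ $L'''\ge0$, then gives the sign. I expect the main obstacle here: with two distinct laws, the derivative produces cross terms between $\mu_A$ and $\mu_C$, and the Jensen step must be arranged so that these cross terms have the right sign. If this fails, I would fall back on the upper bound in item (1) and check the inequality for two-point laws, where the integrals can be computed explicitly.
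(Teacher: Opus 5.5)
Items (1) and (2) are fine. Your bound $k(a,b)\le (ab)^{-1/2}$ followed by Cauchy--Schwarz is a valid, slightly sharper substitute for the paper's Jensen argument on $\int_0^1\frac{dt}{tx^2+(1-t)y^2}$. Deducing \eqref{ineq:SSI} from \eqref{ineq:BSI} through Corollary~\ref{cor:de bruijn_0^infty} and the divisibility of $\mathrm{b}$ is exactly what the paper does.

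The gap is \eqref{ineq:BSI} itself, which your proposal does not prove, for two reasons.

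\emph{The one-variable inequality is left open.} This is $\int\phi\,d\nu_\theta\le\theta\int\phi\,d\mu_A+(1-\theta)\int\phi\,d\mu_C$ with $\phi(x)=\frac{\log x}{x-1}$. You name the cross terms between $\mu_A$ and $\mu_C$ as the obstacle but give no way around them. Checking two-point laws cannot give the general case. The right-hand side is a convex combination of two different functionals, so the sign of a single derivative, as in Theorem~\ref{thm:monofisher}, does not give such a chord-type bound. One needs a genuine decoupling of the two laws, which your plan does not supply.

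\emph{The ``same two-step sandwich'' does not close, even granting that inequality.} In Theorem~\ref{thm:monofisher} both steps compare the same pair of laws. Here, applying the inequality first in $y$ and then in $x$ only gives
\[
\Psi(\nu_\theta)\le\theta^2\Psi(\mu_A)+(1-\theta)^2\Psi(\mu_C)+2\theta(1-\theta)\iint k\,d\mu_A\,d\mu_C .
\]
You still need $2\iint k\,d\mu_A\,d\mu_C\le\Psi(\mu_A)+\Psi(\mu_C)$, i.e.\ positive semi-definiteness of $k$, which you never address.

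\emph{The missing idea.} Both difficulties are removed by the paper's representation $k(x,y)=\int_0^\infty\frac{dt}{(x+t)(y+t)}$. It gives $\Psi(\mu^{(2)})=\int_0^\infty G_{\mu^{(2)}}(-t)^2\,dt$ for $\mu^{(2)}$ on $\R_+$. Writing $1/G_{\nu_\theta}(z)=z-K_{\nu_\theta}(z)$ and using your formula for $K_{\nu_\theta}$, one gets, for $t>0$,
\[
\frac{1}{G_{\nu_\theta}(-t)}=\frac{\theta}{G_{\mu_{A,\theta}}(-t)}+\frac{1-\theta}{G_{\mu_{C,1-\theta}}(-t)}.
\]
Here $G_{\mu_{A,\theta}}(z)=\bigl(z-K_A(z/\theta)\bigr)^{-1}$ is the continuous extension \eqref{def:coninuous extension} at parameter $1/\theta\ge1$, and similarly for $C$. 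Apply pointwise in $t$ the harmonic--arithmetic mean inequality and $(\theta a+(1-\theta)b)^2\le\theta a^2+(1-\theta)b^2$. This gives $\Psi(\nu_\theta)\le\theta\Psi(\mu_{A,\theta})+(1-\theta)\Psi(\mu_{C,1-\theta})$, and the single-law monotonicity \eqref{ineq:monofisher real} finishes. So the two laws are decoupled before any differentiation, and the only derivative computation needed is the one already done in Theorem~\ref{thm:monofisher}.

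The same argument, integrated against $\frac{dt}{1+t}$ instead of squared, proves your one-variable inequality, since $\phi(x)=k(x,1)$. It also gives your missing cross-term bound, as $\int_0^\infty\bigl(G_{\mu_A}(-t)-G_{\mu_C}(-t)\bigr)^2dt\ge0$.

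\emph{A minor point.} In your regularization step, the squared laws of $X+\sqrt\eta B'$ need not be bounded away from $0$. For instance, if $X^2$ is uniform on $[0,1]$, the law of $(X+\sqrt\eta B')^2$ charges every neighbourhood of $0$. The correct reason for $\Psi<\infty$ is $\Psi\le\Psi^*$ from item (1), together with $\Psi^*(X+\sqrt\eta B')=\Psi^*(X)/(1+\eta\Psi^*(X))\le1/\eta$, obtained by letting $z\to0^-$ in \eqref{eq:Cauchytimet}.
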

	\begin{proof}
 To show the \emph{lower bound} in \eqref{ineq:fisher bdd}, it suffices to prove the following inequality
		\begin{equation}\label{ineq:lower bound}
			\frac{\log t-\log s}{t-s}\geq \frac{2}{t+s}.
		\end{equation}
	Without loss of generality, we may assume that $t\geq s$ so that we can rewrite the inequality into
        \begin{equation*}
			\left( \frac{t}{s}+1 \right)\cdot \log \left( \frac{t}{s} \right)- 2\left( \frac{t}{s}-1 \right)\geq 0.
		\end{equation*}
		Here we denote by $F(\zeta):=(\zeta+1)\log \zeta-2(\zeta-1)\geq 0$ for $\zeta\geq 1.$ Observe that $F(1)=0$ and $F'(\zeta)=\log \zeta+ \frac{1}{\zeta}-1\geq 0,$ we deduce that $F(\zeta)\geq 0$ when $\zeta\geq 1$ as wanted.
        
 For the \emph{upper bound}, we note that
		\begin{equation}\label{eq:fisher_0^1}
			\Psi(X) = \iint \frac{\log x^2-\log y^2}{x^2-y^2}\,d\mu(x)d\mu(y) = \iint \int_0^1 \frac{1}{tx^2 + (1-t)y^2}\,dt\,d\mu(x)d\mu(y),
		\end{equation}
        where $\mu$ is the law of $X$. 
		The convexity of $1/x$ and Jensen's inequality yields the bound.

		To prove $(2)$, assume that $\{(\mu_k)_{k\ge 1},\mu\}$ are compactly supported probability measures such that $\mu_k\to \mu$ in the weak topology. Note that $\Gamma(\mu) = \lim_{\epsilon\to 0^+}\Gamma_\epsilon(\mu)$, where
		\begin{equation*}
			\Gamma_\epsilon(\mu) = \int \log(\vert x \vert +\epsilon)\,d\mu(x).
		\end{equation*}
		Therefore, for any $\epsilon>0$, we have
		\begin{equation*}
			\limsup_{k\to +\infty}\Gamma(\mu_k) \le \limsup_{k\to +\infty} \Gamma_\epsilon(\mu_k) = \Gamma_\epsilon(\mu),
		\end{equation*}
		then let $\epsilon\to 0^+$ on the right hand side yields the upper semicontinuity. Similarly, we can show that $\Psi$ is lower semicontinuous.
        
		For $(3)$, due to the equation \eqref{eq:de bruijn_0^infty} and divisibility of Rademacher distribution under Boolean convolution, it suffices to show the Blachman-Stam inequality \eqref{ineq:BSI}.

    Let us denote $ \mu^{(2)}, \nu^{(2)} $ as the laws of operators $ X^2, Y^2 $, respectively, and let $ m^{(2)}_{\theta} $ represent the law of the operator $ \theta X^2 + (1-\theta) Y^2 $. For the sake of simplifying the notation, we still write $\Psi({\mu^{(2)}}):= \Psi(X)=\iint \frac{\log x -\log y}{x-y}\,d\mu^{(2)}(x)d\mu^{(2)}(y)$ (the same for $\nu^{(2)}$ and $m^{(2)}_\theta$). Thanks to \eqref{eq:square}, the Blachman-Stam inequality becomes:
    \begin{equation*}
        \Psi(m^{(2)}_{\theta})\leq \theta \Psi(\mu^{(2)}) + (1-\theta)\Psi(\nu^{(2)}).
    \end{equation*}
    If $\Psi(X)=+\infty$ or $\Psi(Y)=+\infty$, we automatically have the inequality. Hence, we may assume that $\Psi(X)$ and $\Psi(Y)$ are finite. Moreover, there is nothing to prove when $\theta=0$ or $\theta=1$. To proceed, for $\theta\in (0,1)$, we begin by analyzing the Cauchy transform of $ m^{(2)}_\theta $. By the scaling of the Cauchy transform and the additivity of the self energy $K$ under Boolean convolution, we have,
    \begin{align}\label{eq:antilinearrelation}
G_{m^{(2)}_{\theta}}(z)
&=\frac{1}{z-K_{m^{(2)}_{\theta}}(z)}\\
&=\frac{1}{z-\theta K_{\mu^{(2)}}(\theta^{-1}z)-(1-\theta)K_{\nu^{(2)}}((1-\theta)^{-1}z)}\nonumber\\
&=\frac{1}{\theta\bigl(z-K_{\mu^{(2)}}(\theta^{-1}z)\bigr)+(1-\theta)\bigl(z-K_{\nu^{(2)}}((1-\theta)^{-1}z)\bigr)}\nonumber\\
&=\frac{1}{\theta\,\frac{1}{G_{\mu^{(2)}_{\theta}}(z)}+(1-\theta)\,\frac{1}{G_{\nu^{(2)}_{1-\theta}}(z)}},
\end{align}
    where the Cauchy transform of $\mu^{(2)}_\theta,\nu^{(2)}_{1-\theta}$ are defined via
		\begin{equation*}
			G_{\mu^{(2)}_\theta}(z):= \frac{1}{ z- K_{\mu^{(2)}}(\theta^{-1}z) }; \quad G_{\nu^{(2)}_{1-\theta}}(z):= \frac{1}{ z- K_{\nu^{(2)}}((1-\theta)^{-1}z) }.
		\end{equation*}
		In view of the expression $\eqref{def:coninuous extension}$, thus $\mu^{(2)}_{\theta}$ coincides with the continuous extension of the law of $S_n^2$ evaluated at $n=\theta^{-1}$ (this is well defined since $\theta\in (0,1)$), and the same constructions applies to $\nu^{(2)}_{1-\theta}$. Furthermore, recall that $\mu^{(2)}_\theta,\nu^{(2)}_{1-\theta}$ are supported on the positive real line. Now, the main observation is the following: assume that $x,y>0$, then
        \begin{equation*}
            \frac{\log x - \log y}{x-y} = \int_{0}^{+\infty} \frac{1}{(x+t)(y+t)}\,dt,
        \end{equation*}
        hence, by Fubini's theorem, for any probability measure $\mu$ supported on the positive real line, we have
        \begin{equation*}
            \iint \frac{\log x - \log y}{x-y} \,d\mu(x)d\mu(y) = \int_{0}^{+\infty} \left( \int \frac{1}{x+t}\,d\mu(x) \right)^2\,dt = \int _0^{+\infty} \left(G_{\mu}(-t)\right)^2\,dt.
        \end{equation*}
        Since $\mu$ is supported on $\R_+$, the Cauchy transform $G_{\mu}(-t)$ is well defined for $t>0$ and we can extend the equality \eqref{eq:antilinearrelation} to $z\in \R_-$. Therefore, using the Jensen's inequality twice, we get
        \begin{align*}
            \Psi(m^{(2)}_{\theta}) = \int_0^{+\infty} (G_{m^{(2)}_{\theta}}(-t))^2\,dt &= \int_0^{+\infty}\left( \frac{1}{\theta\frac{1}{G_{\mu^{(2)}_{\theta}}(-t)}+(1-\theta)\frac{1}{G_{\nu^{(2)}_{1-\theta}}(-t)}} \right)^2\,dt\\
            &\leq  \int_0^{+\infty} \left( \theta G_{\mu^{(2)}_\theta}(-t) + (1-\theta) G_{\nu^{(2)}_{1-\theta}}(-t) \right)^2\,dt\\
            &\leq  \int_0^{+\infty} \left[ \theta (G_{\mu^{(2)}_\theta}(-t))^2 + (1-\theta) (G_{\nu^{(2)}_{1-\theta}}(-t))^2\right]\,dt\\
            &= \theta\Psi(\mu^{(2)}_{\theta}) + (1-\theta)\Psi(\nu^{(2)}_{1-\theta})\\
            &\le \theta \Psi(\mu^{(2)}) + (1-\theta)\Psi(\nu^{(2)}).
        \end{align*}
        In the last inequality, we use the monotonicity of the Fisher information $\Psi$, see \eqref{ineq:monofisher real}.
	\end{proof}
    As a direct consequence of this proposition, the Boolean counterparts of the information inequalities can be easily deduced.
	\begin{corollary}[Cramér-Rao inequality]
    Let $X\in(\mathcal{M},\tau)$ be a self-adjoint random variable with a symmetric law. We have
	\begin{equation}
		\Psi(X)\ge \frac{1}{\tau(X^2)}.
	\end{equation}
	\end{corollary}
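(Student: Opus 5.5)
The plan is to deduce the Cramér--Rao inequality directly from the lower bound in \eqref{ineq:fisher bdd} of Proposition~\ref{prop:infor property}, followed by Jensen's inequality. Write $\mu=\mathcal{L}(X)$ and let $\tilde X$ be a classically independent copy of $X$. Proposition~\ref{prop:infor property}(1) gives
\begin{equation*}
\Psi(X)\ \ge\ \iint \frac{2}{x^2+y^2}\,d\mu(x)\,d\mu(y).
\end{equation*}
The map $u\mapsto 2/u$ is convex on $(0,+\infty)$. Applying Jensen's inequality to the random variable $U=x^2+y^2$ under the product measure $\mu\otimes\mu$ then yields
\begin{equation*}
\iint \frac{2}{x^2+y^2}\,d\mu(x)\,d\mu(y)\ \ge\ \frac{2}{\iint (x^2+y^2)\,d\mu(x)\,d\mu(y)} = \frac{2}{2\,\tau(X^2)}=\frac{1}{\tau(X^2)}.
\end{equation*}
Here we use that $\tau(X^2)=m_2(\mu)$.

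The only point requiring care is the degenerate situation where $U$ can vanish, i.e. $\mu$ has an atom at $0$. In that case the integrand equals $+\infty$ on a set of positive $\mu\otimes\mu$-measure. Then $\Psi(X)=+\infty$ by the convention in the definition of $\Psi$ (the integrand $\frac{\log x^2-\log y^2}{x^2-y^2}$ is $+\infty$ at $x=y=0$), so the inequality holds trivially.

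If $\tau(X^2)=0$, then $X=0$ in law, $\Psi(X)=+\infty$, and both sides are $+\infty$. Otherwise Jensen's inequality applies with values in $(0,+\infty]$.

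As an alternative route one could use Corollary~\ref{cor:de bruijn_0^infty} together with the maximization of $\Gamma$. However, the direct route above avoids any integrability hypotheses, so it is the one I would follow. I do not expect a genuine obstacle: the only step needing attention is checking the edge cases just described.
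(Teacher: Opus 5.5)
Your proposal is correct and follows essentially the paper's route. The paper also combines the lower bound $\Psi(X)\ge\tau\bigl(2/(X^2+\tilde X^2)\bigr)$ from \eqref{ineq:fisher bdd} with H\"older's inequality $\E[1/U]\,\E[U]\ge 1$, which is equivalent to your Jensen step for $u\mapsto 1/u$; your treatment of the degenerate cases (an atom at $0$, or $X=0$ in law) is a harmless extra check.
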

	\begin{proof}
		It suffices to show that $\Psi(X)\tau(X^2)\ge 1$. By the lower bound of $\Psi$ in \eqref{ineq:fisher bdd}, along with H\"{o}lder inequality, we have
		\begin{equation*}
			\Psi(X)\tau(X^2)\ge \tau\left(\frac{2}{X^2+\tilde{X}^2}\right)\cdot \tau\left( \frac{X^2+\tilde{X}^2}{2} \right)\ge 1.
		\end{equation*}
	\end{proof}

    \begin{corollary}[Boolean Stam inequality]\label{cor:booleanstam}
	Let $X,Y$ be self-adjoint random variables of a non-commutative probability space $(\mathcal{M},\tau)$ with symmetric laws and assume that $X,Y$ are Boolean independent. We have
	\begin{equation}\label{ineq:booleanstam}
		\frac{1}{\Psi(X+Y)}\ge \frac{1}{\Psi(X)} + \frac{1}{\Psi(Y)}.
	\end{equation}
    In particular, if $\Psi(X+Y)=+\infty$, then $\Psi(X)=\Psi(Y)=+\infty$.
	\end{corollary}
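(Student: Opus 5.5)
The plan is to derive the Stam inequality from the Blachman--Stam inequality \eqref{ineq:BSI} together with the scaling property $\Psi(\lambda X)=\lambda^{-2}\Psi(X)$, exactly as in the classical and free settings.

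First, the degenerate cases. If $\Psi(X)=+\infty$, then $1/\Psi(X)=0$, so the inequality reduces to $1/\Psi(X+Y)\ge 1/\Psi(Y)$. This is the case $\theta\to 0$ of the general argument below, and can be handled directly by the same computation with $\Psi(X)$ replaced by large truncations. The cleaner route is to treat the case where both $\Psi(X)$ and $\Psi(Y)$ are finite, and read the infinite cases as limits with the convention $1/\infty=0$. Note that $\Psi>0$ always.

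Main step. Assume $a:=\Psi(X)$ and $b:=\Psi(Y)$ are finite, and fix $\theta\in(0,1)$. Set $X'=X/\sqrt{\theta}$ and $Y'=Y/\sqrt{1-\theta}$. These remain Boolean independent with symmetric laws, and $X+Y=\sqrt{\theta}X'+\sqrt{1-\theta}Y'$. Applying \eqref{ineq:BSI} and then scaling gives
\[
\Psi(X+Y)\le \theta\Psi(X')+(1-\theta)\Psi(Y')=\theta^2 a+(1-\theta)^2 b.
\]
Now choose $\theta=b/(a+b)$, which minimizes the right-hand side. Then $\theta^2a+(1-\theta)^2b=\frac{ab^2+ba^2}{(a+b)^2}=\frac{ab}{a+b}$, so
\[
\frac{1}{\Psi(X+Y)}\ge \frac{a+b}{ab}=\frac1a+\frac1b,
\]
which is \eqref{ineq:booleanstam}.

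For the infinite cases, suppose $\Psi(X)=+\infty$ and $\Psi(Y)<\infty$. Then \eqref{ineq:booleanstam} reads $\Psi(X+Y)\le\Psi(Y)$. I would get this by letting $\theta\to0$ in the bound above, but the term $\theta^2\Psi(X)$ is then infinite, so a better route is needed. Using the monotone semigroup argument behind \eqref{ineq:monofisher real}, the chain of Jensen inequalities in the proof of Proposition~\ref{prop:infor property} gives
\[
\Psi(m_\theta^{(2)})\le \int\bigl(\theta G_{\mu_\theta}+(1-\theta)G_{\nu_{1-\theta}}\bigr)^2\,dt,
\]
which is finite as soon as one term is, with weights adjusted. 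I expect this degenerate case to be the main obstacle. My first attempt would be to use the harmonic-mean form directly: since $G_{m}(-t)=\bigl(\theta/G_{\mu_\theta}+(1-\theta)/G_{\nu_{1-\theta}}\bigr)^{-1}\le G_{\nu_{1-\theta}}(-t)/(1-\theta)$, squaring, integrating and rescaling should yield $\Psi(X+Y)\le\Psi(Y)$. The final assertion follows by contraposition: if $\Psi(X+Y)=+\infty$, the left side of \eqref{ineq:booleanstam} is $0$, forcing $1/\Psi(X)=1/\Psi(Y)=0$.
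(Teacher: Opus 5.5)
Your finite case is the paper's argument: Blachman--Stam \eqref{ineq:BSI} plus scaling, optimized at $\theta=\Psi(Y)/(\Psi(X)+\Psi(Y))$. Your contrapositive for the last assertion is also fine.

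You diverge from the paper in the degenerate case $\Psi(X)=+\infty>\Psi(Y)$. The paper approximates the law with infinite Fisher information by compactly supported symmetric measures with finite $\Psi$. It then uses weak continuity of $\uplus$ and lower semicontinuity of $\Psi$, and falls back on the finite case. You instead compare Cauchy transforms pointwise on the negative half-line. That idea is sound, but your sketch needs three fixes.

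\begin{enumerate}
\item The post-Jensen bound is not ``finite as soon as one term is''. On $(-\infty,0)$ both transforms are negative, so $\bigl(\theta G_{\mu^{(2)}_\theta}(-t)+(1-\theta)G_{\nu^{(2)}_{1-\theta}}(-t)\bigr)^2\ge\theta^2G_{\mu^{(2)}_\theta}(-t)^2$. That bound cannot be controlled by $Y$ alone; only the harmonic mean before Jensen is dominated by a single term.
\item Your displayed inequality is reversed as written, since $G(-t)<0$. It holds for absolute values, which is all you use after squaring.
\item ``Rescaling'' does not finish the job, because $\nu^{(2)}_{1-\theta}$ is the continuous Boolean-CLT extension, not a dilation of $\nu^{(2)}$. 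With $\tilde X=X/\sqrt{\theta}$ and $\tilde Y=Y/\sqrt{1-\theta}$ you get $\Psi(X+Y)\le(1-\theta)^{-2}\Psi\bigl(\tilde\nu^{(2)}_{1-\theta}\bigr)$. You need \eqref{ineq:monofisher real} to bound this by $(1-\theta)^{-2}\Psi(\tilde Y)=\Psi(Y)/(1-\theta)$, and then you must let $\theta\to 0$.
\end{enumerate}

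A cleaner version drops $\theta$ altogether. For $A\ge 0$ set $g_A(t):=\int (t+x)^{-1}\,d\mathcal{L}(A)(x)=-G_A(-t)$, which lies in $(0,1/t]$. The identity $1/G_{X^2+Y^2}(z)=1/G_{X^2}(z)+1/G_{Y^2}(z)-z$, extended to $z=-t<0$, gives
\[
\frac{1}{g_{X^2+Y^2}(t)}=\frac{1}{g_{Y^2}(t)}+\Bigl(\frac{1}{g_{X^2}(t)}-t\Bigr)\ge\frac{1}{g_{Y^2}(t)}.
\]
Hence, by \eqref{eq:square},
\[
\Psi(X+Y)=\int_0^\infty g_{X^2+Y^2}(t)^2\,dt\le\int_0^\infty g_{Y^2}(t)^2\,dt=\Psi(Y).
\]
This proves $\Psi(X+Y)\le\min\{\Psi(X),\Psi(Y)\}$ for all Boolean independent symmetric $X,Y$ in one line. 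It needs no approximation, no semicontinuity and no appeal to the monotonicity theorem. It is therefore a more elementary and self-contained treatment of the degenerate case than the paper's.
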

	\begin{proof}
		First we assume that $\Psi(X),\Psi(Y)$ are finite, and by the Blachman-Stam inequality \eqref{ineq:BSI} and the scaling property of Fisher information $\Psi$, for any $\theta\in [0,1]$, we have
		\begin{align*}
			\Psi(X+Y) &= \Psi\left(\sqrt{\theta}\frac{X}{\sqrt{\theta}} + \sqrt{1-\theta}\frac{Y}{\sqrt{1-\theta}} \right)\\
			&\le \theta \Psi\left( \frac{X}{\sqrt{\theta}} \right) + (1-\theta)\Psi\left( \frac{Y}{\sqrt{1-\theta}} \right)\\
			&= \theta^2\Psi(X) + (1-\theta)^2\Psi(Y).
		\end{align*}
        Let $F(\theta)= \theta^2\Psi(X) + (1-\theta)^2\Psi(Y) $, since the above inequality is valid for all $\theta\in [0,1]$, we deduce that 
		\begin{equation*}
			\Psi(X+Y)\le \inf_{\theta\in [0,1]}F(\theta).
		\end{equation*}
		Set $F'(\theta) = 0$ gives a unique solution $\theta_0 = \Psi(Y)/(\Psi(X)+\Psi(Y))\in [0,1] $ and it is easy to see that this is a minimum point. Finally, one can check that
		\begin{equation*}
			F(\theta_0) = \frac{\Psi(X)\Psi(Y)}{\Psi(X)+\Psi(Y)},
		\end{equation*}
        which gives the desired inequality.
        
		If $\Psi(X)=\Psi(Y)=+\infty$, the equality is obvious. Else if either $\Psi(X)=+\infty$ or $\Psi(Y)=+\infty$, we may assume without loss of generality that $\Psi(X)<+\infty$ and $\Psi(Y)=+\infty$, then \eqref{ineq:booleanstam} reduces to $\Psi(X+Y)\le \Psi(X)$. Let $\mu,\nu\in \probmsymr$ be the law of $X,Y$ respectively, note that there exists a sequence of compactly supported symmetric probability measures $\{\nu_n\}_{n\in \N}$ such that $\Psi(\nu_n)<+\infty$ and $\nu_n$ weakly converge to $\nu$. Since Boolean convolution preserve the weak convergence i.e. $\mu\uplus \nu_n\longrightarrow \mu\uplus \nu$ (because the Cauchy transform converges). Thus, by the lower semicontinuity of $\Psi$, we have
        \begin{equation}
            \Psi(X+Y)=\Psi(\mu\uplus\nu)\le \liminf_{n\to+\infty}\Psi(\mu\uplus\nu_n)\le \Psi(\mu)=\Psi(X),
        \end{equation}
		the last inequality holds because $\Psi(\nu_n)<+\infty$ and goes back to the finite case with $\theta=1$, hence we complete the proof.
	\end{proof}
    
	\begin{corollary}[Entropy power inequality]\label{cor:ent power ineq}
	Let $X,Y$ be self-adjoint random variables of a non-commutative probability space $(\mathcal{M},\tau)$ with symmetric laws and assume that $X,Y$ are Boolean independent. We have
	\begin{equation}\label{ineq:ent power ineq}
	 	\exp(2\Gamma(X+Y))\ge \exp(2\Gamma(X)) + \exp(2\Gamma(Y)).
	\end{equation}
    In particular, if $\Gamma(X+Y)=-\infty$, then $\Gamma(X)=\Gamma(Y)=-\infty$.
	\end{corollary}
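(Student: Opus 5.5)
Since $\Gamma(\lambda X)=\Gamma(X)+\log|\lambda|$, we have $\exp(2\Gamma(\lambda X))=\lambda^2\exp(2\Gamma(X))$, so the entropy power $N(X):=\exp(2\Gamma(X))$ is $2$-homogeneous. The plan is to deduce \eqref{ineq:ent power ineq} from the Shannon--Stam inequality \eqref{ineq:SSI}, in the same way the Boolean Stam inequality was deduced from \eqref{ineq:BSI}.

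First assume $\Gamma(X),\Gamma(Y)>-\infty$, so that $N(X),N(Y)\in(0,+\infty)$. Choose
$$\theta=\frac{N(X)}{N(X)+N(Y)}\in(0,1),$$
and set $U=X/\sqrt{\theta}$ and $V=Y/\sqrt{1-\theta}$. These remain Boolean independent with symmetric laws, and $X+Y=\sqrt{\theta}U+\sqrt{1-\theta}V$. By scaling,
$$\Gamma(U)=\Gamma(X)-\tfrac12\log\theta=\tfrac12\log\bigl(N(X)+N(Y)\bigr),$$
and similarly $\Gamma(V)=\tfrac12\log\bigl(N(X)+N(Y)\bigr)$. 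Applying \eqref{ineq:SSI} to $U,V$ then gives
$$\Gamma(X+Y)\ge \theta\Gamma(U)+(1-\theta)\Gamma(V)=\tfrac12\log\bigl(N(X)+N(Y)\bigr),$$
and exponentiating yields the claim.

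The degenerate cases need separate handling. If both entropies equal $-\infty$, the right-hand side is $0$ and there is nothing to prove. Otherwise, say $\Gamma(X)>-\infty=\Gamma(Y)$; then we need $\Gamma(X+Y)\ge\Gamma(X)$. This follows from \eqref{ineq:SSI} with $U=X/\sqrt\theta$, $V=Y/\sqrt{1-\theta}$ only if we interpret $(1-\theta)\Gamma(V)=-\infty$, which is useless, so I would argue instead as follows.

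\textbf{Option 1.} Use the proof of \eqref{ineq:SSI} itself, i.e.\ the integral representation \eqref{eq:de bruijn_0^infty} combined with the Stam inequality of Corollary~\ref{cor:booleanstam}. The latter gives $\Psi(X+Y+\sqrt{s}B)\le\Psi(X+\sqrt{s}B)$, because $B$ can be split Boolean-independently and \eqref{ineq:booleanstam} implies $\Psi(X'+Y)\le\Psi(X')$. Integrating as in Corollary~\ref{cor:de bruijn_0^infty} gives $\Gamma(X+Y)\ge\Gamma(X)$, provided the hypotheses of that corollary hold.

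\textbf{Option 2.} Approximate $Y$ by $Y_\varepsilon$ with finite entropy (for instance, Boolean convolving with $\sqrt{\varepsilon}B$, which increases entropy by Theorem~\ref{thm:de bruijn}), apply the finite case to get $N(X+Y_\varepsilon)\ge N(X)$, and pass to the limit using the upper semicontinuity from Proposition~\ref{prop:infor property}(2). Note, however, that upper semicontinuity goes the wrong way for a lower bound on $\Gamma(X+Y)$. I would therefore prefer the route $\Gamma(X+Y)\ge\Gamma(X)$ via \eqref{eq:de bruijn_0^infty} and Fisher monotonicity.

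The final statement follows from \eqref{ineq:ent power ineq} by contraposition: $N(X+Y)=0$ forces $N(X)=N(Y)=0$. The main obstacle is this infinite-entropy case, and in particular verifying the technical hypotheses of \eqref{eq:de bruijn_0^infty}, namely finiteness of $\Gamma$ and $\Psi$ along the semigroup. For finiteness of $\Psi(X+\sqrt{s}B)$ with $s>0$, I would check directly from \eqref{eq:Cauchytimet} that convolving with $\sqrt{s}B$ pushes mass away from $0$.
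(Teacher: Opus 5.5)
Your main case $\Gamma(X),\Gamma(Y)>-\infty$ is exactly the paper's argument: Shannon--Stam \eqref{ineq:SSI} plus scaling, with $\theta=e^{2\Gamma(X)}/(e^{2\Gamma(X)}+e^{2\Gamma(Y)})$. The contrapositive for the last sentence is also fine. The gap is in the degenerate case $\Gamma(X)>-\infty=\Gamma(Y)$.

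Your preferred Option~1 needs \eqref{eq:de bruijn_0^infty} for both $X$ and $X+Y$. However, the hypotheses of Corollary~\ref{cor:de bruijn_0^infty} include finiteness of $\Psi$ at $s=0$, and this can genuinely fail here. For $X$ uniform on $[-1,1]$ one has $\Gamma(X)=-1$ but $\Psi(X)\ge\iint\frac{2}{x^2+y^2}\,d\mu(x)d\mu(y)=+\infty$. Your check via \eqref{eq:Cauchytimet} only yields $\Psi(Z+\sqrt{s}B)\le\Psi^*(Z+\sqrt{s}B)\le 1/s$ for $s>0$, so it never reaches the endpoint. To close it you would have to prove $\Gamma(X+Y+\sqrt{s_0}B)\ge\Gamma(X+\sqrt{s_0}B)$ for $s_0>0$ and then let $s_0\to0$. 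On the left-hand side this limit requires exactly $\Gamma(X+Y)\ge\limsup_{s_0\to0}\Gamma(X+Y+\sqrt{s_0}B)$, or else a re-proof of the de Bruijn identity without the $s=0$ assumption.

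That limit statement is the upper semicontinuity you discarded, and your reason for discarding it is wrong. Proposition~\ref{prop:infor property}(2) gives $\Gamma(\lim_k\mu_k)\ge\limsup_k\Gamma(\mu_k)$, which is precisely a lower bound on the entropy of the limit. It plays the same role as the lower semicontinuity of $\Psi$ that produced the upper bound on $\Psi(X+Y)$ in the proof of Corollary~\ref{cor:booleanstam}. Your Option~2 is therefore a complete argument, and it is what the paper means by ``following the same argument as the preceding proof for Stam inequality''. Concretely:
\begin{enumerate}
\item Set $Y_\varepsilon=Y+\sqrt{\varepsilon}B$. Applying \eqref{eq:Cauchytimet} on the negative real axis gives $\Psi^*(Y_\varepsilon)\le 1/\varepsilon$.
\item By Jensen, $\Gamma(Y_\varepsilon)\ge-\frac12\log\Psi^*(Y_\varepsilon)\ge\frac12\log\varepsilon>-\infty$.
\item The finite case then gives $\Gamma(X+Y_\varepsilon)\ge\Gamma(X)$.
\item The laws of $X+Y_\varepsilon$ have uniformly bounded support and converge weakly to that of $X+Y$, since the Cauchy transforms of their squares are $G/(1-\varepsilon G)\to G$.
\item Hence $\Gamma(X+Y)\ge\limsup_{\varepsilon\to0}\Gamma(X+Y_\varepsilon)\ge\Gamma(X)$.
\end{enumerate}
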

	\begin{proof}
	    If $\Gamma(X)=\Gamma(Y)=-\infty$, then the inequality is trivial, hence we may assume without loss of generality that $\Gamma(X)>-\infty$ and $\Gamma(Y)=-\infty$. Following the same argument as the preceding proof for Stam inequality, it can also be checked that $\exp{2\Gamma(X+Y)}\ge \exp{2\Gamma(X)}$. Now if $\Gamma(X),\Gamma(Y)$ are finite, by the Shannon-Stam inequality \eqref{ineq:SSI}, and the scaling property of entropy $\Gamma,$ for any $\theta\in [0,1]$, we have
        \begin{align*}
			\Gamma(X+Y) &= \Gamma\left(\sqrt{\theta}\frac{X}{\sqrt{\theta}} + \sqrt{1-\theta}\frac{Y}{\sqrt{1-\theta}} \right)\\
			&\ge \theta \Gamma\left( \frac{X}{\sqrt{\theta}} \right) + (1-\theta)\Gamma\left( \frac{Y}{\sqrt{1-\theta}} \right)\\
			&= \theta \left(\Gamma(X) - \frac{1}{2}\log \theta\right) + (1-\theta)\left(\Gamma(Y) - \frac{1}{2}\log(1-\theta)\right).
		\end{align*}
        Now taking $\exp2(\cdot)$ on both sides gives
        \begin{equation*}
            \exp(2\Gamma(X+Y))\ge \left( \frac{\exp(2\Gamma(X))}{\theta}\right)^{\theta}\cdot \left( \frac{\exp(2\Gamma(Y))}{1-\theta}\right)^{1-\theta}.
        \end{equation*}
        Notice that the choice of  $$\theta=\frac{e^{2\Gamma(X)}}{e^{2\Gamma(X)} + e^{2\Gamma(Y)}}$$ yields the result.
	\end{proof}
    As observed from the proof, the entropy power inequality \eqref{ineq:ent power ineq} (Stam inequality \eqref{ineq:booleanstam}, respectively) is equivalent to Shannon-Stam inequality \eqref{ineq:SSI} (Blachman-Stam inequality \eqref{ineq:BSI}, respectively).

\subsection{Functional inequalities}\label{Sec3:booleanentropy}
\begin{definition}
let $X$ be a self-adjoint operator of a non-commutative probability space $(\mathcal{M},\tau)$ with law $\mu\in\probmr$. The \textit{Boolean entropy $\Gamma(X|\mathbb{\mathrm{b}})$ of $X$ relative to $\mathbb{\mathrm{b}}$}  is defined as
\begin{equation}\label{def:rela entropy}
		\Gamma(X|\mathbb{\mathrm{b}}):=\frac{1}{2}\int x^2 \,d\mu(x)  -\int \log |x|\,d\mu(x)-\frac{1}{2}\in [0,+\infty].
	\end{equation}
    More generally, for any $\mu \in\probmr$, we define the \textit{Boolean entropy $\Gamma(\mu|\mathbb{\mathrm{b}})$ of $\mu$ relative to $\mathbb{\mathrm{b}}$}   as
\begin{equation*}
		\Gamma(\mu|\mathbb{\mathrm{b}}):=\frac{1}{2}\int x^2 \,d\mu(x)  -\int \log |x|\,d\mu(x)-\frac{1}{2}\in [0,+\infty].
	\end{equation*}
    The positivity of $\Gamma(\mu|\mathbb{\mathrm{b}})$ is immediate from the definition.
\end{definition}
    Transportation cost inequalities play an important role in optimal transport, which estimate the square of the 2-Wasserstein distance $W_2$ and relative entropy between a pair of probability measures. The classical one was first obtained by Talagrand \cite{talagrand1996transportation} in 1996. Later on, the free analogue in one dimension was introduced by Biane and Voiculescu \cite{bianevoi2001free}, and it was extended to multivariate case by Hiai and Ueda \cite{hiai2006free}. In the following, we present the Boolean version of such inequalities in one dimension.
	\begin{theorem}[Boolean Talagrand Transport Inequality]\label{thm:TTI}
		For any $\mu\in\mathcal{M}^{sym}(\R)$, we have
		\begin{equation}\label{ineq:TTI}
		    W_2^2(\mu,\mathbb{\mathrm{b}})\leq 2\Gamma(\mu|\mathbb{\mathrm{b}}).
	    \end{equation}
	\end{theorem}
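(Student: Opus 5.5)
We reduce everything to an explicit one-dimensional computation, since the target $\mathbb{\mathrm{b}}$ is supported on $\{-1,1\}$ and $\mu$ is symmetric.

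\textbf{Step 1: bound $W_2$ by an explicit coupling.} For symmetric $\mu$, couple $X\sim\mu$ with $Y=\operatorname{sign}(X)$. Atoms of $\mu$ at $0$ are sent to $\pm1$ with probability $1/2$ each. By symmetry of $\mu$, $Y$ has law $\mathbb{\mathrm{b}}$, and $|X-Y|=\bigl||X|-1\bigr|$. Hence
$$W_2^2(\mu,\mathbb{\mathrm{b}})\le \int \bigl(|x|-1\bigr)^2\,d\mu(x).$$
In fact this is an equality, because any coupling satisfies $|x-y|\ge ||x|-1|$ when $|y|=1$. We only need the upper bound.

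\textbf{Step 2: pointwise comparison with the relative entropy integrand.} By definition,
$$2\Gamma(\mu|\mathbb{\mathrm{b}})=\int \bigl(x^2-2\log|x|-1\bigr)\,d\mu(x).$$
It therefore suffices to prove, for every $r=|x|\ge0$,
$$(r-1)^2\le r^2-2\log r-1,$$
which is equivalent to $\log r\le r-1$. This is the elementary concavity inequality for the logarithm, and at $r=0$ the right-hand side equals $+\infty$. Integrating against $\mu$ gives $W_2^2\le 2\Gamma(\mu|\mathbb{\mathrm{b}})$.

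\textbf{Step 3: integrability.} If $\int x^2\,d\mu=+\infty$ or $\int\log|x|\,d\mu=-\infty$, then $\Gamma(\mu|\mathbb{\mathrm{b}})=+\infty$ (the integrand is bounded below by $0$) and there is nothing to prove. Otherwise both integrals are finite and the pointwise inequality integrates directly.

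The only delicate point is the coupling step with an atom at $0$, and that is handled by the fair splitting described in Step 1. The remaining work is the elementary inequality $\log r\le r-1$.
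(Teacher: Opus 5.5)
Your proof is correct and follows the paper's argument. The sign coupling gives $W_2^2(\mu,\mathrm{b})\le\int(|x|-1)^2\,d\mu(x)$, where the paper asserts equality via optimal mass splitting, and the pointwise inequality $\log r\le r-1$ (the paper's $-2|x|\le -2\log|x|-2$) then integrates to $2\Gamma(\mu|\mathrm{b})$; your handling of a possible atom at $0$ and of the infinite cases is simply a bit more explicit than the paper's.
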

	\begin{proof}
    Since the target measure has two atoms, optimal couplings reduce to mass splitting: any joint law $\pi$ on $\mathbb{R}^2$ with marginal $\mu$ and $\mathbb{\mathrm{b}}$ can be decomposed into $\mu_+\otimes \delta_1 + \mu_-\otimes \delta_{-1},$ where $\mu_-,\mu_+$ are positive measures such that $\mu_-+\mu_+=\mu$ and $\mu_-(\mathbb{R})=\mu_+(\mathbb{R})=\frac{1}{2}$. In particular, the optimal coupling is when $\mu_+$ is supported on $[0,+\infty)$ and $\mu_-$ is supported on $(-\infty,0]$, then
	     \begin{equation*}
		    W_2^2(\mu,\mathbb{\mathrm{b}}) =\int_{\R_+} |x-1|^2\,d\mu_+(x) + \int_{\R_-} |x+1|^2\,d\mu_-(x) = \int (|x|-1)^2\,d\mu(x).
	    \end{equation*}
        Thus, using the inequality that $-2|x| \leq -2\log |x| -2$ gives
	     \begin{align*}
		    W_2^2(\mu,\mathbb{\mathrm{b}})&= \int (|x|-1)^2\,d\mu(x)\\
		    &\leq \int (x^2 - 2\log |x| -1)\,d\mu(x) = 2\Gamma(\mu|\mathbb{\mathrm{b}}).
	    \end{align*}
	\end{proof}
    Let us come back to the Ornstein-Uhlenbeck process. Starting from a self-adjoint operator $X$ of a non-commutative probability space $(\mathcal{M},\tau)$ whose law is symmetric, we consider the variable $X_t:=e^{-t}X + \sqrt{1-e^{-2t}}B$, where $B\in \mathcal{M}$ is Boolean independent from $X$ and distributed according to the Rademacher law $\mathrm{b}.$ We denote by $\mu_{X_t}$ the law of $X_t$. We have
    $$\tau(X_t^2)=e^{-2t}\tau(X^2)+1-e^{-2t}.$$
We define our relative Boolean Fisher information via taking derivative with respect to $t$ of $-\Gamma(X_t|B)$. Using the equation \eqref{eq:fisher ou} we get
	\begin{align}\label{eq:rela fisher}
		-\frac{d}{dt}\Gamma(X_t|\mathbb{\mathrm{b}})\nonumber
		&=\frac{d}{dt}\left(\Gamma(X_t)-\frac{1}{2}\tau\left(X_t^2\right)\right)\\
        &=\iint \frac{\log x^2-\log y^2}{x^2-y^2}\,d\mu_{X_t}(x)d\mu_{X_t}(y) -1 + e^{-2t}\int x^2 \,d\mu_{X_0}(x) - e^{-2t}\nonumber \\
		&= \Psi(X_t) + \tau(X_t^2) - 2
	\end{align}
    which motivates the following definition.
    \begin{definition}
let $X$ be a self-adjoint operator of a non-commutative probability space $(\mathcal{M},\tau)$ with law $\mu\in\mathcal{M}^{sym}(\R)$. The \textit{Boolean Fisher information} $\Psi(X| \mathbb{\mathrm{b}})$ of $X$ relative to $B$ is defined as
\begin{equation}\label{def:rela fisher}
		\Psi(X|\mathbb{\mathrm{b}}):=\iint \frac{\log x^2-\log y^2}{x^2-y^2}\,d\mu(x)d\mu(y)+\int x^2 \,d\mu(x)  -2\in [0,+\infty].
	\end{equation}
    More generally, for any $\mu \in\mathcal{M}^{sym}(\R)$, we define the \textit{Boolean Fisher information $\Psi(\mu|\mathbb{\mathrm{b}})$ of $\mu$ relative to $\mathbb{\mathrm{b}}$}   as
\begin{equation*}
		\Psi(\mu|\mathbb{\mathrm{b}}):=\iint \frac{\log x^2-\log y^2}{x^2-y^2}\,d\mu(x)d\mu(y)+\int x^2 \,d\mu(x)  -2\in [0,+\infty].
	\end{equation*}
\end{definition}
	A priori, it is not obvious that this expression should be nonnegative. Nevertheless, the following result shows that Boolean Fisher information retains the fundamental positivity property known from classical and free settings.

	\begin{proposition}
		The relative Boolean Fisher information $\Psi(\mu| \mathbb{\mathrm{b}})$ is non-negative for all $\mu\in \mathcal{M}^{sym}(\R)$ and $\Psi(\mu| \mathbb{\mathrm{b}})=0$ if and only if $\mu=\mathbb{\mathrm{b}}$.	
	\end{proposition}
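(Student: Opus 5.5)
The plan is to reduce the positivity to the Boolean Cramér–Rao inequality proved above, and then to obtain the equality case from the equality cases of the elementary inequalities used in its proof. Let $\mu\in\mathcal{M}^{sym}(\R)$ and set $m_2=\int x^2\,d\mu(x)$.

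First I dispose of the degenerate cases.
\begin{itemize}
\item If $m_2=+\infty$, then $\Psi(\mu|\mathbb{\mathrm{b}})=+\infty$, because $\Psi(\mu)>0$.
\item If $m_2=0$, then $\mu=\delta_0$. In that case the integrand $\frac{\log x^2-\log y^2}{x^2-y^2}$ is $+\infty$ on the diagonal, so $\Psi(\mu)=+\infty$.
\end{itemize}
So I may assume $0<m_2<\infty$.

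The Cramér–Rao inequality gives $\Psi(\mu)\ge 1/m_2$. Its proof only uses the pointwise inequality \eqref{ineq:lower bound} and Hölder's inequality, so it holds for any symmetric $\mu$, not only compactly supported ones. Hence
\[
\Psi(\mu|\mathbb{\mathrm{b}})\ \ge\ \frac{1}{m_2}+m_2-2=\Big(\sqrt{m_2}-\frac{1}{\sqrt{m_2}}\Big)^2\ \ge 0 .
\]

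For the equality case, assume $\Psi(\mu|\mathbb{\mathrm{b}})=0$. Both inequalities in the display above must then be equalities.
\begin{itemize}
\item Equality in the square forces $m_2=1$.
\item Equality in Cramér–Rao forces equality in the bound $\Psi(\mu)\ge \iint \frac{2}{x^2+y^2}\,d\mu\,d\mu$. This comes from the pointwise inequality \eqref{ineq:lower bound} applied with $t=x^2$, $s=y^2$. That pointwise inequality is strict whenever $t\neq s$: the function $F(\zeta)=(\zeta+1)\log\zeta-2(\zeta-1)$ satisfies $F(1)=0$ and $F'(\zeta)=\log\zeta+\frac1\zeta-1>0$ for $\zeta>1$.
\end{itemize}
Consequently $x^2=y^2$ for $\mu\otimes\mu$-almost every $(x,y)$, which means $\mu^{(2)}$ is a Dirac mass $\delta_a$. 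Since $m_2=1$, we get $a=1$. By symmetry of $\mu$, this gives $\mu=\tfrac12\delta_{-1}+\tfrac12\delta_1=\mathbb{\mathrm{b}}$.

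Conversely, for $\mu=\mathbb{\mathrm{b}}$ we have $\mu^{(2)}=\delta_1$. The diagonal value of $\frac{\log x-\log y}{x-y}$ at $x=y=1$ is $1$, so $\Psi(\mathbb{\mathrm{b}})=1$ and $\Psi(\mathbb{\mathrm{b}}|\mathbb{\mathrm{b}})=1+1-2=0$.

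The only step needing care is the strictness argument in the equality case, namely that a pointwise inequality which is strict off the diagonal forces the product measure to concentrate on $\{x^2=y^2\}$. This works because the integrals involved are finite once $\Psi(\mu)<\infty$: the difference of the two integrands is then a nonnegative integrable function with zero integral, hence it vanishes $\mu\otimes\mu$-almost everywhere.
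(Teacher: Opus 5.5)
Your proof is correct. It uses the same key ingredient as the paper, the pointwise bound \eqref{ineq:lower bound}, but combines it with the second-moment term differently.

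The paper writes $\Psi(\mu|\mathbb{\mathrm{b}})$ as a single double integral with integrand $\frac{\log x^2-\log y^2}{x^2-y^2}+\frac12x^2+\frac12y^2-2$. It then uses $\frac{2}{w}+\frac{w}{2}\ge 2$ pointwise, with $w=x^2+y^2$, so the integrand itself is nonnegative. You integrate first instead: you invoke Cram\'er--Rao, which is Cauchy--Schwarz pairing $\frac{2}{x^2+y^2}$ with $\frac{x^2+y^2}{2}$, and then the scalar inequality $\frac{1}{m_2}+m_2\ge 2$.

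Your route gives the slightly more quantitative bound $\Psi(\mu|\mathbb{\mathrm{b}})\ge\bigl(\sqrt{m_2}-1/\sqrt{m_2}\bigr)^2$. You also check several points the paper passes over: that Cram\'er--Rao extends beyond compactly supported laws, the degenerate cases $m_2\in\{0,+\infty\}$, and the equality case. The paper only declares the equality case clear; your integrability argument, which needs $\Psi(\mu)<\infty$ to subtract the two integrals, is exactly what is required.

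The paper's pointwise route makes the equality case more direct. A nonnegative integrand with zero integral vanishes $\mu\otimes\mu$-a.e., which forces both $x^2=y^2$ and $x^2+y^2=2$, hence $x^2=1$ $\mu$-a.e. There is no need to first extract $m_2=1$ and then pass through equality in Cram\'er--Rao.
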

	\begin{proof}
		From the explicit expression above,
		\begin{equation}
			\Psi(\mu|\mathbb{\mathrm{b}})= \iint \left(\frac{\log x^2-\log y^2}{x^2-y^2} + \frac{1}{2}x^2 + \frac{1}{2}y^2 - 2\right)\,d\mu(x)d\mu(y).
		\end{equation}
		To prove that $\Psi(\mu|\mathbb{\mathrm{b}})\geq 0$ for all symmetric probability measures $\mu,$ we intend to show that the integrant is almost everywhere non-negative. By the inequality \eqref{ineq:lower bound}, together with the inequality $a^2+b^2\geq 2ab$ yield
		\begin{equation*}
			\Psi(\mu|\mathbb{\mathrm{b}})\geq \iint \left( \frac{2}{x^2+y^2} + \frac{1}{2}(x^2+y^2) - 2 \right)\,d\mu(x)d\mu(y) \geq 0.
		\end{equation*}
		This completes the first part of the proof. Based on the previous argument, it is clear that the equality holds if and only if $\mu=\mathbb{\mathrm{b}}.$
	\end{proof}

	The log-Sobolev inequality (LSI) is a fundamental tool in analysis and probability to study convergence to equilibrium, concentration phenomenon, namely the relative entropy is bounded by the relative Fisher information. We aim to establish the Boolean counterpart, which mirrors the celebrated classical log-Sobolev inequality of Gross \cite{gross1975logarithmic}, and the free analogue was first discovered by Voiculescu \cite{voiculescu1998analogues} and recovered by Biane and Speicher \cite{biane2001free}.
	\begin{theorem}[LSI]\label{thm:LSI}
		For any $\mu\in\mathcal{M}^{sym}(\R)$ with finite variance, we have
		\begin{equation}\label{ineq:LSI}
		   \Gamma(\mu|\mathbb{\mathrm{b}})\leq \frac{1}{2}\Psi(\mu| \mathbb{\mathrm{b}}).
	    \end{equation}
	\end{theorem}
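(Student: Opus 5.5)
The plan is to reduce the inequality to a pointwise inequality on $\R_+^2$ and integrate it against $\mu^{(2)}\otimes\mu^{(2)}$. By the definitions \eqref{def:rela entropy} and \eqref{def:rela fisher}, the second-moment terms cancel in \eqref{ineq:LSI}: after multiplying by $2$, the claim reads $m_2(\mu)-2\Gamma(\mu)-1\le \Psi(\mu)+m_2(\mu)-2$. Since $m_2(\mu)<\infty$, this is equivalent to
\[
\Psi(\mu)\ \ge\ 1-2\Gamma(\mu)=1-\int \log x\,d\mu^{(2)}(x).
\]

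\textbf{Step 1 (reduce to a simpler lower bound).} I will use the lower bound in \eqref{ineq:fisher bdd} of Proposition~\ref{prop:infor property}, which comes from the pointwise inequality \eqref{ineq:lower bound}. With $\nu=\mu^{(2)}$ it gives
\[
\Psi(\mu)\ge \iint \frac{2}{x+y}\,d\nu(x)d\nu(y).
\]
I will also symmetrize the right-hand side of the target as
\[
1-\int\log x\,d\nu=\iint\Bigl(1-\tfrac12\log x-\tfrac12\log y\Bigr)\,d\nu(x)\,d\nu(y).
\]
So it suffices to prove, for all $x,y>0$,
\[
\frac{2}{x+y}\ \ge\ 1-\frac{\log x+\log y}{2}.
\]

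\textbf{Step 2 (the pointwise inequality).} Set $s=(x+y)/2$. Concavity of $\log$ gives $\log s\ge \frac12(\log x+\log y)$. The elementary bound $\log s\ge 1-1/s$ gives $1/s\ge 1-\log s$. Chaining these,
\[
\frac{2}{x+y}=\frac1s\ \ge\ 1-\log s\ \ge\ 1-\frac{\log x+\log y}{2}.
\]
Equality forces both $x=y$ and $s=1$, that is, $x=y=1$. This shows in addition that equality in \eqref{ineq:LSI} can only hold when $\mu^{(2)}=\delta_1$, i.e. $\mu=\mathbb{\mathrm{b}}$.

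\textbf{Step 3 (integration and infinite values).} This is the only delicate point, and I expect it to be mild. Let $h(x,y)=\frac{2}{x+y}+\frac{\log x+\log y}{2}-1$, which is nonnegative by Step 2, so $\iint h\,d\nu\,d\nu\ge0$ is well defined in $[0,+\infty]$. I will split this integral as the sum of $\iint \frac{2}{x+y}\,d\nu\,d\nu$, $\int\log x\,d\nu$ and $-1$. This splitting is legitimate because finite variance makes the positive part of $\log x$ integrable, so no $\infty-\infty$ can occur.

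If $\Gamma(\mu)=-\infty$, the nonnegativity of $\iint h\,d\nu\,d\nu$ then forces $\iint \frac{2}{x+y}\,d\nu\,d\nu=+\infty$, hence $\Psi(\mu)=+\infty$, and \eqref{ineq:LSI} holds trivially. Otherwise all the terms are finite and the inequality follows directly. Combined with Step 1, this yields \eqref{ineq:LSI}.
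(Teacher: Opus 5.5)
Your proof is correct, and it takes a different and more elementary route than the paper.

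\textbf{The paper's route.} It first optimizes over dilations $\lambda\mapsto F(\lambda X)$ to turn the claim into the scale-invariant inequality $\E[\log X]+\log\E\bigl[\frac{\log X-\log\tilde X}{X-\tilde X}\bigr]\ge 0$. It then uses the same lower bound $\frac{2}{x+y}$ to reduce to $\E[\log X]+\log\E\bigl[\frac{2}{X+\tilde X}\bigr]\ge 0$. This is proved for empirical measures through Lemma~\ref{lem:discrete}, a Lagrange-multiplier argument that is only sketched; the step ``by symmetry all $x_i$ must be equal'' is not really justified. Finally it extends to general $\mu^{(2)}$ by weak density and truncation.

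\textbf{Your route.} You skip the dilation step and prove the non-scale-invariant form $\E\bigl[\frac{2}{X+\tilde X}\bigr]+\E[\log X]-1\ge 0$ directly. It follows from the two-variable pointwise bound $\frac{2}{x+y}\ge 1-\log\frac{x+y}{2}\ge 1-\frac{\log x+\log y}{2}$. As a result no Lagrange-multiplier lemma or approximation argument is needed, and the integrability bookkeeping is explicit.

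\textbf{Nothing is lost.} Apply your pointwise bound to $(\lambda x,\lambda y)$, integrate, and choose $\lambda=\E\bigl[\frac{2}{X+\tilde X}\bigr]$. This gives back the paper's scale-invariant inequality, and hence Lemma~\ref{lem:discrete} itself. Similarly, the remark's form $e^{2\Gamma(X)}\Psi(X)\ge 1$ follows by applying your LSI to all dilations of $\mu$ and optimizing.

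\textbf{Points to tidy.}
\begin{enumerate}
\item Your $h$ is nonnegative only on $(0,\infty)^2$; at points $(0,y)$ with $y>0$ it equals $-\infty$. So dispose of the case $\mu(\{0\})>0$ separately. There $\nu\otimes\nu(\{(0,0)\})>0$, so $\iint\frac{2}{x+y}\,d\nu\,d\nu=+\infty$ and $\Psi(\mu)=+\infty$ trivially.
\item When $\Gamma(\mu)>-\infty$, the term $\iint\frac{2}{x+y}\,d\nu\,d\nu$ can still be infinite. This is harmless, but it contradicts your sentence that ``all the terms are finite''.
\item Your equality characterization ($\mu=\mathbb{\mathrm{b}}$) holds only when both sides are finite. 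Both sides can equal $+\infty$, for instance when $\mu$ has an atom at $0$.
\end{enumerate}
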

	\begin{proof}
    If $\Psi(\mu|\mathbb{\mathrm{b}})=+\infty$, we automatically have the inequality. Now assume that $\Psi(\mu|\mathbb{\mathrm{b}})<+\infty$, and since $\mu$ has finite variance, in view of the expression of $\Gamma(\mu|\mathbb{\mathrm{b}})$ and $\Psi(\mu|\mathbb{\mathrm{b}})$, we can reduce the inequality to the following:
	\begin{equation*}
		\int \log|x|\,d\mu(x)+\frac{1}{2}\left( \iint \frac{\log x^2-\log y^2}{x^2-y^2}\,d\mu(x)d\mu(y)-1 \right)\geq 0.
	\end{equation*}
		As the first step, observe that the map $x\mapsto x^2$ gives a bijection between symmetric probability measures on $\R$ and probability measures on $\R_+$. Therefore, the above is equivalent to showing that for any $\mu^{(2)}\in\mathcal{M}(\R_+)$,
		\begin{equation*}
			\int \log x\,d\mu^{(2)}(x)+\iint \frac{\log x-\log y}{x-y}\,d\mu^{(2)}(x)d\mu^{(2)}(y)-1 \geq 0.
		\end{equation*}
		We may rewrite the left-hand side as the following functional:
		\begin{equation*}
			F(X):=\E[\log X]+\E\left[\frac{\log X -\log \tilde{X}}{X-\tilde{X}}\right]-1,
		\end{equation*}
		where $X,\tilde{X}$ are classical i.i.d random variables with law $\mu^{(2)}$. For any $\lambda>0$, consider the dilation function
		\begin{equation*}
			f_{X}(\lambda):=F(\lambda X)=\log \lambda+ \E[\log X]+ \frac{1}{\lambda} \E\left[\frac{\log X-\log \tilde{X}}{X-\tilde{X}}\right]-1.
		\end{equation*}
		To ensure $f_X(\lambda)\ge 0$ for all $X>0$, it suffices to check that the global minimum of $f_X(\cdot)$ is nonnegative. Differentiating with respect to $\lambda$ and setting the derivative equal to zero gives  $\lambda= \E\left[\frac{\log X-\log \tilde{X}}{X-\tilde{X}}\right]$, which is the unique minimizer of $f_X$. Evaluating $f_X(\cdot)$ at this point, thus it reduces to proving that for any positive random variable $X\sim \mu^{(2)}$ and its classical i.i.d copy $\tilde{X}$,
        \begin{equation}\label{ineq:equi LSI}
		 	\E[\log X]+\log\left\{ \E\left[ \frac{\log X-\log \tilde{X}}{X-\tilde{X}} \right] \right\}\geq 0.
		\end{equation}
		Using the lower bound from \eqref{ineq:fisher bdd}, it is enough to show that
	    \begin{equation*}
			\E[\log X]+\log\left\{ \E\left[ \frac{2}{X+\tilde{X}} \right] \right\}\geq 0,
		\end{equation*}
	    To proceed, assume $\mu^{(2)} $ is discrete of the form $\mu^{(2)} =\frac{1}{n}\sum_{i=1}^n\delta_{x_i},$ where $x_i>0$ fixed for all $i=1,\ldots, n.$ We aim to show that the above inequality holds for such discrete $\mu^{(2)}$. Plugging it into the left hand side of the above inequality gives
	    \begin{align*}
			LHS &= \frac{1}{n}\log (x_1\cdots x_n) + \log\left(\frac{1}{n^2} \sum_{i=1}^n  \frac{1}{x_i} + \frac{2}{n^2} \sum_{1\leq i<j\leq n}  \frac{2}{x_i+x_j} \right)\\
			&=\log\left\{ \sqrt[n]{x_1\cdots x_n}\left( \frac{1}{n^2} \sum_{i=1}^n  \frac{1}{x_i} + \frac{2}{n^2} \sum_{1\leq i<j\leq n}  \frac{2}{x_i+x_j}  \right) \right\}.
		\end{align*}
	    Hence it remains to show that for all $x_i>0,\ i=1,\ldots,n,$
	    \begin{equation*}
			\sqrt[n]{x_1\cdots x_n}\left( \frac{1}{n^2} \sum_{i=1}^n  \frac{1}{x_i} + \frac{2}{n^2} \sum_{1\leq i<j\leq n}   \frac{2}{x_i+x_j}  \right) \geq 1.
		\end{equation*}
	    By homogeneity, we may assume $x_1x_2\cdots x_n=1$. Under this assumption, the inequality follows from the lemma stated below, which we shall prove later.
	    \begin{lemma}\label{lem:discrete}
			Fix $x_i>0,\ i=1,\ldots,n,$ such that $x_1x_2\cdots x_n=1.$ Then for any integer $n\geq 1,$ we have
			\begin{equation*}
			   \frac{1}{n^2} \sum_{i=1}^n  \frac{1}{x_i} + \frac{2}{n^2} \sum_{1\leq i<j\leq n}  \frac{2}{x_i+x_j} \geq 1,
		\end{equation*}
	    and the equality holds if and only if $x_1=x_2=\cdots =x_n=1.$
		\end{lemma}
		With the above lemma, we have shown that the inequality \eqref{ineq:equi LSI} is valid for all discrete probability measures in the set $\mathcal{D}:=\{\frac{1}{n}\sum_{i=1}^n\delta_{x_i}\ \vert\ x_i>0,\ \forall i=1,\ldots,n,\ n\geq 1\}.$  Since $\mathcal{D}\subset \mathcal{M}(\R_+)$ is dense with respect to the weak topology, using the dominated convergence theorem, we deduce that the inequality holds for all probability measures with support $[\epsilon, \epsilon^{-1}]$ for any $0<\epsilon\leq 1.$ Furthermore, for a general probability measure $\nu $ supported on $\R_+,$ we define
		\begin{equation*}
			\nu_{\epsilon}(A):=\frac{1}{\nu([\epsilon,\epsilon^{-1}])}\nu(A\cap [\epsilon,\epsilon^{-1}])\ \text{for any}\ A\in\mathcal{B}(\R_+)
		\end{equation*}
	   as the truncated measure of $\nu $. Clearly, we have $\nu ([\epsilon,\epsilon^{-1}]) \to 1$ as $\epsilon\to 0.$ In addition, the fact that the function $\log (\cdot)$ is monotone on $\R_+$ implies that $(\log x-\log y)/(x-y)$ is non-negative. Thanks to the monotone convergence theorem, we deduce that
	   \begin{align*}
	   	  \E[\log X\cdot \mathbf{1}_{[\epsilon,\epsilon^{-1}]}]&\overset{\epsilon\to 0}{\longrightarrow} \E[\log X],\\
	   	  \E\left[\frac{\log X-\log \tilde{X}}{X-\tilde{X}}\cdot \mathbf{1}_{[\epsilon,\epsilon^{-1}]^2}\right]&\overset{\epsilon\to 0}{\longrightarrow} \E\left[ \frac{\log X-\log \tilde{X}}{X-\tilde{X}} \right].
	   \end{align*}
	   Therefore, these yield
	   \begin{align*}
	   	  \int \log x\,d\nu_{\epsilon}(x)&\overset{\epsilon\to 0}{\longrightarrow} \int \log x\,d\nu(x),\\
	   	  \iint \frac{\log x-\log y}{x-y}\,d\nu_{\epsilon}(x)d\nu_{\epsilon}(y)&\overset{\epsilon\to 0}{\longrightarrow} \iint \frac{\log x-\log y}{x-y}\,d\nu(x)d\nu(y),
	   \end{align*}
	    which completes the proof.
	\end{proof}
	\begin{proof}(Sketch of proof of Lemma~\ref{lem:discrete})
		This is a minimization problem with constraints. Therefore, we just apply Lagrange multiplier method and we define the following function in the domain $\R_+^n\times \R:$
		\begin{equation*}
			L(x_1,\ldots,x_n,\lambda):= \sum_{i=1}^n  \frac{1}{x_i} + 2\cdot \sum_{1\leq i<j\leq n}  \frac{2}{x_i+x_j} + \lambda(x_1x_2\cdots x_n-1).
		\end{equation*}
		Setting all partial derivatives equal to zero yields the system:
		\begin{align*}
			&\frac{\partial L}{\partial x_i}= -\frac{1}{x_i^2} - 2\cdot \sum_{j\ne i}\frac{2}{(x_i+x_j)^2} + \lambda \prod_{j\ne i}x_j =0\ i=1,\ldots, n;\\
			&\frac{\partial L}{\partial \lambda}=x_1x_2\cdots x_n - 1 = 0.
		\end{align*}
		By symmetry, it is clear that all $x_i$ must be equal. Imposing the constraint $ x_1 x_2 \cdots x_n = 1 $, we obtain the unique solution $ x_1 = x_2 = \cdots = x_n = 1 $. It is straightforward that this critical point is indeed a minimum.
	\end{proof}
	\begin{remark}
		In fact, if $\mu$ is compactly supported and let $X\in(\mathcal{M},\tau)$ such that $X\sim \mu$, the inequality \eqref{ineq:equi LSI} appearing in the proof is equivalent to $\Gamma(X) \geq -\frac{1}{2} \log \left(\Psi(X)\right)$,
		which can be inferred from the Entropy power inequality \eqref{ineq:ent power ineq}.
        Indeed, if we choose $Y=B\sim \mathbb{\mathrm{b}}$ in $(\mathcal{M},\tau)$ that is Boolean independent of $X$, then by the scaling property of entropy $\Gamma$, we have
	\begin{equation*}
		\exp(2\Gamma(X+\sqrt{t}B))\ge \exp(2\Gamma(X)) + t\exp(2\Gamma(B)) = \exp(2\Gamma(X))  + t.
	\end{equation*}
	Hence, one can deduce that
	\begin{equation*}
		\frac{d}{dt}\Big|_{t=0} \exp(2\Gamma(X+\sqrt{t}B)) \ge 1.
	\end{equation*}
	By the de Bruijn equality \eqref{eq:de bruijn}, we know that
	\begin{equation*}
		\frac{d}{dt}\Big|_{t=0} \exp(2\Gamma(X+\sqrt{t}B)) = \exp(2\Gamma(X))\Psi(X).
	\end{equation*}
	So we have just proved that
	\begin{equation}
	    \exp(2\Gamma(X))\Psi(X)\ge 1.
	\end{equation}
        In particular, we deduce that $\Psi(X)<+\infty \Rightarrow \Gamma(X)>-\infty$. In any case, we find the proof of Theorem~\ref{thm:LSI} both elegant and straightforward, and therefore we choose to retain it.
	\end{remark}
	As in the classical and free case, we have also the following equivalent statement of LSI.
	\begin{proposition}
            Let $X\in(\mathcal{M},\tau)$ be a self-adjoint random variable with symmetric law, we have the exponential decay of $\Gamma(\cdot|B)$ along the Ornstein-Uhlenbeck process $X_t$:
            \begin{equation}\label{ineq:exp decay}
                \Gamma(X_t| \mathbb{\mathrm{b}})\le e^{-2t} \Gamma(X| \mathbb{\mathrm{b}}).
			\end{equation}
	\end{proposition}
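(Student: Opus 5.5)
The proof is a Grönwall argument along the Boolean Ornstein--Uhlenbeck flow, combining the dissipation identity \eqref{eq:rela fisher} with the log-Sobolev inequality of Theorem~\ref{thm:LSI}. Set $h(t):=\Gamma(X_t|\mathbb{\mathrm{b}})$ with $X_t=e^{-t}X+\sqrt{1-e^{-2t}}B$. Computation \eqref{eq:rela fisher} gives
\begin{equation*}
h'(t)=-\Psi(X_t|\mathbb{\mathrm{b}}).
\end{equation*}
Applying Theorem~\ref{thm:LSI} to $\mu_{X_t}$ gives $\Psi(X_t|\mathbb{\mathrm{b}})\ge 2h(t)$, hence $h'(t)\le -2h(t)$. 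Then $\frac{d}{dt}\bigl(e^{2t}h(t)\bigr)=e^{2t}(h'+2h)\le 0$, so $e^{2t}h(t)\le h(0)$, which is \eqref{ineq:exp decay}.

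The real work is justifying the differential identity, since \eqref{eq:rela fisher} was derived formally from \eqref{eq:fisher ou}, which rests on the de Bruijn identity of Theorem~\ref{thm:de bruijn}. That theorem requires $\Gamma$ and $\Psi$ to be finite along the flow. I would handle this as follows.

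First, if $\Gamma(X|\mathbb{\mathrm{b}})=+\infty$ there is nothing to prove. Otherwise $X$ is bounded, so $\tau(X_t^2)$ is finite and explicit.

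Second, for every $t>0$ the law of $X_t^2$ has Cauchy transform given by \eqref{eq:Cauchytimet} after scaling. Since $G_{X^2}(z)\to 0$ as $z\to 0$ from the left only when there is no atom issue, I would instead argue directly: for $s>0$, the measure $\mu^{(2)}(s)$ has no mass near $0$. Indeed, from $G=G_{X^2}/(1-sG_{X^2})$, on $(-\infty,0)$ we have $G_{X^2}<0$, so $|G(s,-u)|\le 1/s$. Therefore
\begin{equation*}
\Psi(X+\sqrt{s}B)=\int_0^\infty G(s,-u)^2\,du
\end{equation*}
is finite: use the bound $1/s^2$ for small $u$ and the compact-support decay $\sim 1/u^2$ for large $u$.

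Third, $\Gamma(X+\sqrt{s}B)>-\infty$ then follows from the remark $\exp(2\Gamma)\Psi\ge 1$.

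With these facts, Theorem~\ref{thm:de bruijn} applies on any interval $[t_0,t]$ with $t_0>0$. So $h$ is absolutely continuous on $(0,\infty)$ with $h'=-\Psi(X_t|\mathbb{\mathrm{b}})$, and the Grönwall step gives $h(t)\le e^{-2(t-t_0)}h(t_0)$.

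Finally, let $t_0\to 0^+$. The de Bruijn identity on $[0,t]$ (or upper semicontinuity of $\Gamma$ from Proposition~\ref{prop:infor property}, together with continuity of the second moment) gives $\Gamma(X_{t_0})\to\Gamma(X)$, so $h(t_0)\to h(0)$. I would prove this by monotone convergence in the integral form \eqref{eq:de bruijn}: $\Gamma(X+\sqrt{s}B)$ is increasing in $s$, and the $\delta$-regularized version extends down to $s=0$.

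I expect this continuity at $t_0=0$ to be the main obstacle. It is needed when $\Psi(X)=+\infty$, so that the de Bruijn identity cannot be applied directly from $0$.
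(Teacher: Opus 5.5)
Your proposal is correct and is essentially the paper's proof, which consists exactly of the Gr\"onwall step $h'(t)=-\Psi(X_t|\mathbb{\mathrm{b}})\le -2h(t)$, obtained from \eqref{eq:rela fisher} and Theorem~\ref{thm:LSI}; your additional justification (running the flow from $t_0>0$, where $|G(s,-u)|\le 1/s$ makes $\Psi$ and $\Gamma$ finite) is a welcome supplement that the paper omits. One small correction: letting $t_0\to 0^+$ requires $\liminf_{t_0\to 0^+}\Gamma(X_{t_0})\ge \Gamma(X)$, which comes from the monotonicity $\Gamma(X+\sqrt{s}B)\ge\Gamma(X)$ (your $\delta$-regularized argument), not from upper semicontinuity, which only gives the reverse inequality.
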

    Note that when $t=0$, the left and the right hand side of \eqref{ineq:exp decay} are equal. Hence we deduce that 
		\begin{equation*}
			\frac{d}{dt}\Big|_{t=0} \Gamma(X_t| \mathbb{\mathrm{b}})\le \frac{d}{dt}\Big|_{t=0} e^{-2t} \Gamma(X| \mathbb{\mathrm{b}}),
		\end{equation*}
		which just gives back the LSI.
	\begin{proof}
By the de Bruijn identity \eqref{eq:rela fisher}, with the LSI, we have
 \begin{equation*}
			\frac{d}{dt} \Gamma(X_t| \mathbb{\mathrm{b}}) = - \Psi(X_t| \mathbb{\mathrm{b}})\le -2 \Gamma(X_t| \mathbb{\mathrm{b}}).
	\end{equation*}
		Then Gronwall's lemma yields the exponential decay.
	\end{proof}
    We emphasize that the assumption of symmetry in Theorem \ref{thm:LSI} can be removed because the quantity involved only depends on the push forward measure $\mu^{(2)}\in\mathcal{M}(\R_+)$. However, the above equivalences fail under such extension, as the argument relies on the de Bruijn identity. The following result is direct from Theorem \ref{thm:TTI} and Theorem \ref{thm:LSI}.
	\begin{corollary}
	    For any $\mu\in\probmsymr$,we have the bound $W_2^2(\mu,\mathbb{\mathrm{b}})\leq \Psi(\mu| \mathbb{\mathrm{b}}).$
	\end{corollary}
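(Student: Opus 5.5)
This is a direct chaining of the two preceding results. For $\mu\in\probmsymr$, the Boolean Talagrand transport inequality (Theorem~\ref{thm:TTI}) gives $W_2^2(\mu,\mathbb{\mathrm{b}})\le 2\Gamma(\mu|\mathbb{\mathrm{b}})$, and the log-Sobolev inequality (Theorem~\ref{thm:LSI}) gives $2\Gamma(\mu|\mathbb{\mathrm{b}})\le \Psi(\mu|\mathbb{\mathrm{b}})$. Concatenating the two yields the claim. The only care needed is with hypotheses and infinite values, which I would handle in the following order.

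First, if $\Psi(\mu|\mathbb{\mathrm{b}})=+\infty$ there is nothing to prove. So I assume $\Psi(\mu|\mathbb{\mathrm{b}})<+\infty$. Since
\[
\Psi(\mu|\mathbb{\mathrm{b}})=\Psi(\mu)+\int x^2\,d\mu(x)-2
\]
with $\Psi(\mu)>0$, finiteness forces $\int x^2\,d\mu<\infty$. Thus $\mu$ has finite variance, which is exactly the hypothesis of Theorem~\ref{thm:LSI}. Finiteness also gives $\Psi(\mu)<\infty$. By the remark following the LSI, $\Gamma(\mu)>-\infty$, so $\Gamma(\mu|\mathbb{\mathrm{b}})$ is a finite nonnegative number and the chain involves no $\infty-\infty$ issue.

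Second, I apply Theorem~\ref{thm:TTI}, which needs no moment assumption: its proof writes $W_2^2(\mu,\mathbb{\mathrm{b}})=\int(|x|-1)^2\,d\mu$ and bounds the integrand pointwise by $x^2-2\log|x|-1$. Then I apply Theorem~\ref{thm:LSI}, multiplied by $2$. Combining gives
\[
W_2^2(\mu,\mathbb{\mathrm{b}})\le 2\Gamma(\mu|\mathbb{\mathrm{b}})\le \Psi(\mu|\mathbb{\mathrm{b}}).
\]

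I expect no real obstacle; the only step needing attention is the reduction to the finite-variance case just described.
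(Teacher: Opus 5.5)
Your proof is correct and is the paper's argument: the paper presents this corollary as the direct concatenation of Theorem~\ref{thm:TTI} and Theorem~\ref{thm:LSI}. Your reduction to the finite-variance case, which secures the hypothesis of Theorem~\ref{thm:LSI}, is a worthwhile detail the paper leaves implicit; the appeal to the remark for $\Gamma(\mu)>-\infty$ is not needed, since chaining two inequalities between quantities in $[0,+\infty]$ involves no subtraction, and that remark is only argued for compactly supported laws anyway.
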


\section{The Non-microstates approach}
\subsection{The Boolean Schwinger-Dyson equation}\label{sec:Schwinger} Let us recall informally the notion of classical and free Fisher information in order to motivate our upcoming definition. In particular, we will not explore the minimal regularity of the test-functions $f$ occurring for the equations to hold. In the classical case, we have the integration by parts formula
$$\int x\cdot f(x)\,d\gamma(x)=\int f'(x)\,d\gamma(x)$$
satisfied by the standard Gaussian distribution $\gamma$. The Fisher information of any measure $\mu$, if it exists, is given as the norm $\|\xi\|_{L^2(\mu)}^2$ of the \textit{score function} $\xi$, a function which allows to write the following deformation of the integration by part formula:
$$\int \xi(x)\cdot f(x)\,d\mu(x)=\int f'(x)\,d\mu(x).$$
Similarly, in the free case, we have the following Schwinger-Dyson formula
$$\int x\cdot f(x)\,d\sigma(x)=\iint \partial f(x,y)\,d\sigma(x)d\sigma(y)$$
satisfied by the standard semicircular distribution $\sigma$, where the $\partial f$ is the non-commutative derivative $\partial f(x,y)=(f(x)-f(y))/(x-y)$. The free Fisher information of any measure~$\mu$, if it exists, is given as $\|\xi\|_{L^2(\mu)}^2$ for the \textit{conjugate variable}, a function which allows to write the following deformation of the Schwinger-Dyson formula:
$$\int \xi(x)\cdot f(x)\,d\mu(x)=\iint \partial f(x,y)\,d\mu(x)d\mu(y).$$
If we want to follow the same path in the Boolean case, the first step is to find a similar formula satisfied by the Rademacher variable $\mathrm{b}=\frac{1}{2}\delta_{-1}+\frac{1}{2}\delta_{+1}$. Let us define $D^\downarrow f(x):=\frac{f(x)-f(0)}{x}$ for functions $f:\mathbb{R}\to \mathbb{R}$ which are differentiable at $0$. For $\mathrm{b}=\frac{1}{2}\delta_{-1}+\frac{1}{2}\delta_{+1}$, we have
$$\int x\cdot f(x)\,d\mathrm{b}(x)=\int D^\downarrow f(x)\,d\mathrm{b}(x)$$
since $x=1/x$ on the support of $\mathrm{b}$.
The operator $D^\downarrow$ will be useful in Section~\ref{sec:Steinmethod} in order to use Stein's method in the Boolean setting.
For symmetric measures $\mu$, we can say that $x\mapsto 1/x$ plays the role of the score function, or of the conjugate variable,  in the Boolean case. Indeed, if $\mu\in \mathcal{M}^{sym}(\mathbb{R})$, for all measurable functions $f$ which are differentiable at $0$, we have
$$\int \frac{1}{x}\cdot f(x)\,d\mu(x)=\int D^\downarrow f(x)\,d\mu(x)$$
whenever both sides are integrable.

\subsection{Boolean entropy and Fisher information}\label{Sec1:booleanentropy*}
Since the function $x \mapsto 1/x$ plays the role of a Boolean score function
in the Boolean Schwinger–Dyson identity, we take $\|1/x\|^2_{L^2(\mu)}$ as a definition for the Non-microstates Boolean Fisher information.

\begin{definition}
Let $X$ be a self-adjoint operator of a non-commutative probability space $(\mathcal{M},\tau)$ with law $\mu\in\mathcal{M}^{sym}(\R)$. The (Non-microstates) \textit{Boolean Fisher information} $\Psi^*(X)$ of $X$ is defined as
\begin{equation}\label{Fisher*}
		\Psi^*(X):=\int \frac{1}{x^2}\,d\mu(x)\in (0,+\infty].
	\end{equation}
    More generally, for any $\mu \in\mathcal{M}^{sym}(\R)$, we define the (Non-microstates) \textit{Boolean Fisher information} $\Psi^*(\mu)$ of $\mu$ as
\begin{equation*}
		\Psi^*(\mu):=\int \frac{1}{x^2}\,d\mu(x)\in (0,+\infty].
	\end{equation*}
\end{definition}

If we want to define the Non-microstates Boolean entropy, we need to integrate this Fisher information $\Psi^*$ along the Boolean heat semigroup: starting from a self-adjoint operator $X$ of a non-commutative probability space $(\mathcal{M},\tau)$ whose law is symmetric, we consider the variable $X+\sqrt{t}B$ where $B\in \mathcal{M}$ is Boolean independent from $X$ and distributed according to the $\mathrm{b}.$ The derivative of $\Psi^*(X+\sqrt{t}B)$ will produce the following quantity (see Theorem~\ref{entfisher*}).

\begin{definition}
Let $X$ be a self-adjoint operator of a non-commutative probability space $(\mathcal{M},\tau)$ with law $\mu\in\mathcal{M}^{sym}(\R)$. The (Non-microstates) \textit{Boolean entropy} $\Gamma^*(X)$ of $X$ is defined as
\begin{equation}\label{Gamma*}
		\Gamma^*(X):=-\frac{1}{2}\log \int \frac{1}{x^2} \,d\mu(x)\in ]-\infty,+\infty].
	\end{equation}
    More generally, for any $\mu \in\mathcal{M}^{sym}(\R)$, we define the (Non-microstates) \textit{Boolean entropy} $\Gamma^*(\mu)$ of $\mu$ as
\begin{equation*}
		\Gamma^*(\mu):=-\frac{1}{2}\log \int \frac{1}{x^2}\,d\mu(x)\in ]-\infty,+\infty].
	\end{equation*}
\end{definition}
In contrast with the Microstates framework, the Boolean heat flow admits a
closed-form evolution for $\Psi^*$; this makes the following de Bruijn identity a
direct computation rather than a variational statement.
	\begin{theorem}\label{entfisher*}
Let $X$ be a self-adjoint operator of a non-commutative probability space $(\mathcal{M},\tau)$, with law $\mu\in\mathcal{M}^{sym}(\R)$,  and $B\in \mathcal{M}$ be Boolean independent from $X$ and distributed according to~$\mathrm{b}$. Fix $t\geq 0$ such that $\Gamma^*(X+\sqrt{s}B)<\infty,\ \Psi^*(X+\sqrt{s}B)<\infty$ for any $s\in[0,t].$ Then we have
		\begin{equation}\label{debruijn*}
		    \Gamma^*(X+\sqrt{t}B) - \Gamma^*(X) = \int_0^t \frac{1}{2}\Psi^*(X + \sqrt{s}B )\,ds.
	    \end{equation}
	\end{theorem}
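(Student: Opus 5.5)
The plan is to compute $\Psi^*(X+\sqrt{t}B)$ in closed form and then integrate directly. The starting point is the identity \eqref{eq:Cauchytimet} from the proof of Theorem~\ref{thm:de bruijn}:
$$G_{(X+\sqrt{t}B)^2}(z)=\frac{G_{X^2}(z)}{1-tG_{X^2}(z)}.$$
The measures $\mu^{(2)}$ and $\mu^{(2)}(t)$ are supported on $\R_+$ (and are compactly supported, since the operators are bounded). Hence both Cauchy transforms extend analytically to $\mathbb{C}\setminus\R_+$, in particular to $z=-\varepsilon$ with $\varepsilon>0$, and the identity persists there by analytic continuation. This is the same extension to $\R_-$ used in the proof of the Blachman--Stam inequality.

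Next I evaluate at $z=-\varepsilon$ and let $\varepsilon\to0^+$. Since
$$-G_{X^2}(-\varepsilon)=\int\frac{1}{x+\varepsilon}\,d\mu^{(2)}(x)\uparrow\int\frac{1}{x}\,d\mu^{(2)}(x)=\int\frac{1}{x^2}\,d\mu(x)=\Psi^*(X)$$
by monotone convergence, and similarly for $X+\sqrt{t}B$, passing to the limit gives
$$\Psi^*(X+\sqrt{t}B)=\frac{\Psi^*(X)}{1+t\Psi^*(X)},\qquad\text{equivalently}\qquad \frac{1}{\Psi^*(X+\sqrt{t}B)}=\frac{1}{\Psi^*(X)}+t.$$
Here the hypothesis $\Psi^*(X)<\infty$ is used; in any case the formula is consistent, with value $1/t$, if $\Psi^*(X)=\infty$. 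Justifying this limit, including that the denominator $1+t\Psi^*(X)$ stays away from $0$, is the only delicate point. It is harmless, since all quantities involved are positive: $-G_{X^2}(-\varepsilon)>0$, so $1-tG_{X^2}(-\varepsilon)\ge 1$.

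Finally, by definition,
$$\Gamma^*(X+\sqrt{s}B)=\frac12\log\Bigl(\frac{1}{\Psi^*(X)}+s\Bigr),$$
which is smooth in $s\ge0$, with derivative
$$\frac{1}{2}\cdot\frac{1}{1/\Psi^*(X)+s}=\frac12\Psi^*(X+\sqrt{s}B).$$
Integrating over $[0,t]$ by the fundamental theorem of calculus yields \eqref{debruijn*}. Alternatively, one can integrate $\tfrac{1}{2}\bigl(1/\Psi^*(X)+s\bigr)^{-1}$ explicitly and compare with the logarithm.
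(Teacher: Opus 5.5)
Your proposal is correct and follows the paper's proof. Both derive the closed form $\Psi^*(X+\sqrt{t}B)=\Psi^*(X)/(1+t\Psi^*(X))$ from the identity \eqref{eq:Cauchytimet} and then integrate explicitly. Your evaluation at $z=-\varepsilon$ followed by monotone convergence as $\varepsilon\to0^+$ is a rigorous version of the paper's formal step $\Psi^*(X+\sqrt{t}B)=-G_{(X+\sqrt{t}B)^2}(0)$, in the same spirit as the paper's own proof of the Boolean Stam equality.
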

    \begin{proof}
        By \eqref{eq:Cauchytimet}, we know that
        \begin{align*}
            \Psi^*(X+\sqrt{t}B)&=-G_{(X+\sqrt{t}B)^2}(0)\\
            &=-\frac{G_{X^2}(0)}{1-tG_{X^2}(0)}\\
            &=\frac{\Psi^*(X)}{1+t\Psi^*(X)}.
        \end{align*}
        In particular, we can compute
        \begin{align*}
            \int_0^t\frac{1}{2}\Psi^*(X+\sqrt{s}B)\,ds &=\frac{1}{2}\log(1+t\Psi^*(X))-\frac{1}{2}\log(1)\\
            &=-\frac{1}{2}\log\left(\frac{\Psi^*(X)}{1+t\Psi^*(X)}\right)+\frac{1}{2}\log(\Psi^*(X))\\
            &=\Gamma^*(X+\sqrt{t}B) - \Gamma^*(X).
        \end{align*}
    \end{proof}

From the definitions of $\Gamma^*(X)$ and $\Psi^*(X)$, we have the expected scaling properties.
\begin{proposition}
    Let $X$ be a self-adjoint operator of a non-commutative probability space $(\mathcal{M},\tau)$ with law $\mu\in\mathcal{M}^{sym}(\R)$, and let $\lambda\in \mathbb{R}\setminus \{0\}$.
    
    We have $\Gamma^*(X)=-\frac{1}{2}\log(\Psi^*(X));\ \Psi^*(\lambda X) =\lambda^{-2} \Psi^*(X)$ and $ \Gamma^*(\lambda X) = \Gamma^*(X) + \log |\lambda|$.
\end{proposition}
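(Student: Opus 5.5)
The statement is a direct consequence of the definitions \eqref{Fisher*} and \eqref{Gamma*}, so the plan is to verify the three identities in turn, handling the value $+\infty$ separately where needed.

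\emph{First identity.} By \eqref{Gamma*}, $\Gamma^*(X)=-\frac{1}{2}\log\int x^{-2}\,d\mu(x)$, and by \eqref{Fisher*} the integral inside the logarithm is exactly $\Psi^*(X)$. Hence $\Gamma^*(X)=-\frac12\log\Psi^*(X)$ holds by definition. We use the convention $-\frac12\log(+\infty)=-\infty$ when $\Psi^*(X)=+\infty$; this is consistent with the stated range of $\Gamma^*$, which should then be read as $[-\infty,+\infty)$. Since $\Psi^*(X)>0$ always, the logarithm is well defined.

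\emph{Scaling of $\Psi^*$.} Let $\lambda\neq 0$. The law $\mu_\lambda$ of $\lambda X$ is the push-forward of $\mu$ by $x\mapsto \lambda x$, and it is again symmetric. The change of variables formula gives
$$\Psi^*(\lambda X)=\int \frac{1}{y^2}\,d\mu_\lambda(y)=\int\frac{1}{\lambda^2x^2}\,d\mu(x)=\lambda^{-2}\Psi^*(X).$$
This holds in $(0,+\infty]$, including the infinite case, because the integrand is nonnegative.

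\emph{Scaling of $\Gamma^*$.} Combine the first two identities:
$$\Gamma^*(\lambda X)=-\tfrac12\log\bigl(\lambda^{-2}\Psi^*(X)\bigr)=-\tfrac12\log\Psi^*(X)+\log|\lambda|=\Gamma^*(X)+\log|\lambda|.$$
When $\Psi^*(X)=+\infty$, both sides equal $-\infty$.

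No real obstacle is expected. The only point requiring care is the convention for infinite values, since the identities must hold in the extended reals.
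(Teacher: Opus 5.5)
Your proposal is correct and follows the same route as the paper, which gives no separate proof and simply states that these identities follow from the definitions \eqref{Fisher*} and \eqref{Gamma*}; your change of variables for $\Psi^*$, followed by applying $-\frac12\log$, is exactly that verification. Your remark on the range is also right: since $\Psi^*(X)\in(0,+\infty]$, the entropy $\Gamma^*$ takes values in $[-\infty,+\infty)$, so the interval $]-\infty,+\infty]$ written in the paper's definition of $\Gamma^*$ appears to be a typo.
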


Following the computations of the previous section for $\Gamma(X)$ and $\Psi(X)$, we also get the following.

	\begin{corollary}\label{Entfisher*}
		We have
		\begin{equation}
			\Gamma^*(X)=\frac{1}{2} \int_0^\infty \left( \frac{1}{s+1}-\Psi^*(X+\sqrt{s}B) \right)\,ds
		\end{equation}
	\end{corollary}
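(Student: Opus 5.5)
The identity can be checked by direct computation, using the closed-form evolution of $\Psi^*$ along the Boolean heat flow from the proof of Theorem~\ref{entfisher*}:
$$\Psi^*(X+\sqrt{s}B)=\frac{\Psi^*(X)}{1+s\Psi^*(X)} .$$
This comes from \eqref{eq:Cauchytimet} evaluated at $z=0$ (extended to the negative half-line, as in the proof of Proposition~\ref{prop:infor property}). Write $a:=\Psi^*(X)\in(0,+\infty]$.

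\textbf{Case $a<\infty$.} The integrand is
$$\frac{1}{s+1}-\frac{a}{1+sa}.$$
For large $s$ it behaves like $\frac{1}{s}-\frac{1}{s}+O(s^{-2})$, so it is integrable at infinity. It is also integrable at $0$. Its primitive on $[0,T]$ is
$$\log(1+T)-\log(1+aT)=\log\frac{1+T}{1+aT}.$$
As $T\to\infty$ this tends to $\log(1/a)=-\log a$. Multiplying by $\frac12$ gives $-\frac12\log\Psi^*(X)=\Gamma^*(X)$, using the scaling proposition.

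\textbf{Case $a=+\infty$.} Here $G_{X^2}(0^-)=-\infty$, so $\Psi^*(X+\sqrt{s}B)=1/s$ for $s>0$. Then the integrand is
$$\frac{1}{s+1}-\frac1s,$$
whose integral diverges to $-\infty$ at $0$. This agrees with the convention $\Gamma^*(X)=-\tfrac12\log(+\infty)=-\infty$. The justification of this degenerate case is via monotone limits of $G_{X^2}(-\epsilon)$ as $\epsilon\to0^+$.

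Alternatively, one can mimic the proof of Corollary~\ref{cor:de bruijn_0^infty}: use the Ornstein--Uhlenbeck variables $X_t$, Theorem~\ref{entfisher*}, the scaling $\Gamma^*(\lambda X)=\Gamma^*(X)+\log|\lambda|$, and $\Gamma^*(B)=0$. This also needs the limit $\Gamma^*(X_t)\to\Gamma^*(B)$, which follows from the explicit formula above.

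\textbf{Main obstacle.} The only delicate point is justifying the extension of \eqref{eq:Cauchytimet} to $z=0$ with value $-\Psi^*$. Since the measures live on $\R_+$, I would take $z=-\epsilon$ and let $\epsilon\to0^+$, which is justified by monotone convergence.
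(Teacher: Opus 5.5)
Your argument is correct, but your main route is not the one the paper takes. The paper obtains this corollary by repeating the proof of Corollary~\ref{cor:de bruijn_0^infty}, which is essentially your ``alternative'':
\begin{itemize}
\item it uses the Ornstein--Uhlenbeck variables $X_t=e^{-t}X+\sqrt{1-e^{-2t}}B$, with $\frac{d}{dt}\Gamma^*(X_t)=\Psi^*(X_t)-1$ from \eqref{EntOU*};
\item it integrates over $t\in[0,\infty)$ using $\Gamma^*(X_t)\to\Gamma^*(B)=0$;
\item it then changes variables $s=e^{2t}-1$.
\end{itemize}

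You instead substitute the closed form $\Psi^*(X+\sqrt{s}B)=a/(1+sa)$ and integrate directly. This buys several things:
\begin{itemize}
\item It avoids the change of variables and the limit $t\to\infty$, which the paper treats as evident. As you observe, that limit rests on the same closed form anyway.
\item It shows the integrand equals $\frac{1-a}{(1+s)(1+sa)}$, which has constant sign and is $O(s^{-2})$, so the improper integral is unambiguous.
\item It covers the degenerate case $\Psi^*(X)=+\infty$. That case lies outside the finiteness hypotheses under which Theorem~\ref{entfisher*} is stated. Your value $-\infty$ is the right convention there: the definition actually gives $\Gamma^*\in[-\infty,+\infty)$, despite the range printed in the paper.
\end{itemize}

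The paper's route, in exchange, presents the formula as a formal consequence of the de Bruijn structure (de Bruijn identity, scaling, $\Gamma^*(B)=0$, convergence to $\mathrm{b}$). This keeps the parallel with the microstates Corollary~\ref{cor:de bruijn_0^infty}, where no closed form is available.

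Your justification of the value at $z=0$ fills the step the paper writes simply as $\Psi^*=-G(0)$. You extend \eqref{eq:Cauchytimet} to $z=-\epsilon$ and let $\epsilon\to0^+$ by monotone convergence, the same device used in the proof of Proposition~\ref{Corbooleanstam*}. The extension is legitimate because both sides are analytic on $\mathbb{C}\setminus[0,\infty)$ and $1-tG_{X^2}(-\epsilon)\geq 1$.
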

\subsection{Information-theoretic properties}\label{Sec2:booleanentropy*}
In the Non-microstates case, a lot of inequalities of the previous section become equalities.
	\begin{proposition}
	    [Boolean Stam equality]\label{Corbooleanstam*}
	Let $X,Y$ be self-adjoint operators of a non-commutative probability space $(\mathcal{M},\tau)$ with symmetric laws and which are Boolean independent. We have
	\begin{equation}\label{Voistam*}
		\frac{1}{\Psi^*(X+Y)}=\frac{1}{\Psi^*(X)} + \frac{1}{\Psi^*(Y)}.
	\end{equation}
	\end{proposition}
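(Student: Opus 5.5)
The plan is to read off $\Psi^*$ from the Cauchy transform of the squared variable at $z=0$, exactly as in the proof of Theorem~\ref{entfisher*}, and then use additivity of the self-energy. By \eqref{eq:square}, $\mathcal{L}((X+Y)^2)=\mathcal{L}(X^2+Y^2)$, where $X^2$ and $Y^2$ are Boolean independent, so $\mathcal{L}((X+Y)^2)=\mu^{(2)}\uplus\nu^{(2)}$. Here $\mu^{(2)},\nu^{(2)}\in\mathcal{M}(\R_+)$ are the laws of $X^2,Y^2$. For a measure $\rho$ on $\R_+$, the Cauchy transform $G_\rho(z)=\int\frac{1}{z-x}\,d\rho(x)$ extends analytically to $z\in(-\infty,0)$, where it is negative. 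Monotone convergence as $z\to0^-$ gives $-G_\rho(0^-)=\int x^{-1}\,d\rho(x)\in(0,+\infty]$. Applied to $\rho=\mu^{(2)}$, this yields $\Psi^*(X)=-G_{X^2}(0^-)$.

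Next I would use $K_\rho(z)=z-1/G_\rho(z)$. On $(-\infty,0)$ this is well defined, since $G_\rho<0$ there. The identity $K_{\mu^{(2)}\uplus\nu^{(2)}}=K_{\mu^{(2)}}+K_{\nu^{(2)}}$ holds on $\mathbb{H}_+$, and it extends to $(-\infty,0)$ by analytic continuation through the gap, exactly as used for \eqref{eq:antilinearrelation}. Letting $z\to0^-$ gives
\[
\frac{1}{G_{(X+Y)^2}(0^-)}=\frac{1}{G_{X^2}(0^-)}+\frac{1}{G_{Y^2}(0^-)},
\]
with the convention $1/\infty=0$. Substituting $G(0^-)=-\Psi^*$ then gives \eqref{Voistam*} directly.

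The step needing care is the boundary behaviour when some $\Psi^*$ is infinite, or when a measure has an atom at $0$. I would check that $1/G_\rho(z)\to 1/G_\rho(0^-)$ in all cases, with $1/G_\rho(z)\to0$ when $\int x^{-1}d\rho=\infty$. This holds because $G_\rho(z)$ is monotone in $z$ on $(-\infty,0)$, so $-G_\rho(z)\uparrow\int x^{-1}\,d\rho$ as $z\uparrow0$. The limit is therefore well defined in $(0,\infty]$ and the reciprocals converge in $[0,\infty)$.

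Unbounded variables are not an issue here, since all variables are bounded operators in a von Neumann algebra. Even so, the argument only uses Cauchy transforms, which are defined for arbitrary probability measures, so it applies to all $\mu,\nu\in\probmsymr$.
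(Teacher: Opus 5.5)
Your proposal is correct and is essentially the paper's own argument. Both use $\mathcal{L}((X+Y)^2)=\mu^{(2)}\uplus\nu^{(2)}$ and continue the self-energy additivity to the negative real axis, giving $1/G_{X^2+Y^2}(-\varepsilon)=1/G_{X^2}(-\varepsilon)+1/G_{Y^2}(-\varepsilon)+\varepsilon$, and then let $\varepsilon\to0^+$ by monotone convergence. Your explicit handling of infinite values (the convention $1/\infty=0$) and of atoms at $0$ just spells out what the paper's limit step leaves implicit.
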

\begin{proof}
For all $z\in \mathbb{H}_+$, we have
$$
       G_{X^2+Y^2}(z)=\frac{1}{z-K_{X^2+Y^2}(z)}
        =\frac{1}{1/G_{X^2}(z)+1/G_{Y^2}(z)-z}
$$
    which extends to $z$ negative real numbers and yields : for $\varepsilon >0$, we have
    $$\tau\left(\frac{1}{-\varepsilon-X^2-Y^2}\right)=\frac{1}{\frac{1}{\tau\left(\frac{1}{-\varepsilon-X^2}\right)}+\frac{1}{\tau\left(\frac{1}{-\varepsilon-Y^2}\right)}+\varepsilon},$$
    or equivalently
    $$\tau\left(\frac{1}{\varepsilon+X^2+Y^2}\right)=\frac{1}{\frac{1}{\tau\left(\frac{1}{\varepsilon+X^2}\right)}+\frac{1}{\tau\left(\frac{1}{\varepsilon+Y^2}\right)}-\varepsilon}.$$
    We conclude using monotone convergence:
    $$\Psi^*(X+Y)=\lim_{\varepsilon\to 0}\tau\left(\frac{1}{\varepsilon+X^2+Y^2}\right),\ \Psi^*(X)=\lim_{\varepsilon\to 0}\tau\left(\frac{1}{\varepsilon+X^2}\right)\ \text{ and }\ \Psi^*(Y)=\lim_{\varepsilon\to 0}\tau\left(\frac{1}{\varepsilon+Y^2}\right).$$
\end{proof}

Thus, many information-theoretic inequalities from the Microstates approach
collapse here into exact identities, reflecting the linear behaviour of
Non-microstates quantities under Boolean convolution. Because $\Gamma^*(X)=-\frac{1}{2}\log \Psi^*(X)$, we get the following.

    	\begin{corollary}[Entropy power equality]
	Let $X,Y$ be self-adjoint operators of a non-commutative probability space $(\mathcal{M},\tau)$ with symmetric laws and which are Boolean independent. We have
	\begin{equation}
	 	\exp(2\Gamma^*(X+Y))=  \exp(2\Gamma^*(X)) + \exp(2\Gamma^*(Y)),
	\end{equation}
	\end{corollary}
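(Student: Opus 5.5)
The entropy power equality is a direct consequence of the Boolean Stam equality (Proposition~\ref{Corbooleanstam*}) combined with the identity $\Gamma^*(X)=-\frac{1}{2}\log\Psi^*(X)$ from the scaling proposition. Indeed, exponentiating this identity gives
\begin{equation*}
\exp(2\Gamma^*(X))=\frac{1}{\Psi^*(X)},
\end{equation*}
and the same holds for $Y$ and for $X+Y$. The last case is legitimate because $X+Y$ is self-adjoint with symmetric law $\mathcal{L}(X)\uplus\mathcal{L}(Y)$, as recalled in the notations. Substituting these three expressions into \eqref{Voistam*} turns it exactly into the claimed equality.

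The only point needing care is the treatment of degenerate values, since $\Psi^*$ takes values in $(0,+\infty]$ and $\Gamma^*$ in $]-\infty,+\infty]$. I would adopt the conventions $1/(+\infty)=0$, $\log(+\infty)=+\infty$ and $\exp(-\infty)=0$. With these, $\exp(2\Gamma^*(X))=1/\Psi^*(X)\in[0,+\infty)$ holds in all cases: when $\Psi^*(X)=+\infty$, both sides vanish.

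I would then verify that the Stam equality was proved with the same conventions. Its proof passes to the limit $\varepsilon\to0$ in
\begin{equation*}
\tau\big((\varepsilon+X^2+Y^2)^{-1}\big)^{-1}=\tau\big((\varepsilon+X^2)^{-1}\big)^{-1}+\tau\big((\varepsilon+Y^2)^{-1}\big)^{-1}-\varepsilon,
\end{equation*}
and monotone convergence of each reciprocal toward $1/\Psi^*$, with the value $0$ allowed, gives \eqref{Voistam*} in $[0,+\infty)$.

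The remaining obstacle is the finiteness of $\Psi^*$, equivalently the possibility that $\Gamma^*=+\infty$. The definition gives $\Psi^*>0$, so $\Gamma^*<+\infty$ always; the only question is whether $\Psi^*$ can fail to be finite. For $\Psi^*$ to be finite one needs no atom at $0$, which is not guaranteed, but the conventions above already cover the infinite case. Hence the equality holds unconditionally and no further argument is required.
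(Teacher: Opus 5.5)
Your proof is correct and is the same as the paper's: substitute $\exp(2\Gamma^*)=1/\Psi^*$ into the Boolean Stam equality. Your check that the $\varepsilon\to 0$ limit in the Stam proof holds in $[0,+\infty)$, with the value $0$ allowed, justifies the degenerate case that the paper passes over silently.

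One small slip: since $\Psi^*\in(0,+\infty]$, one actually has $\Gamma^*\in[-\infty,+\infty)$. So an infinite Fisher information corresponds to $\Gamma^*=-\infty$, which is exactly what your convention $\exp(-\infty)=0$ uses, and not to $\Gamma^*=+\infty$ as your last paragraph states.
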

    \begin{proof}
From the Boolean Stam equality, we have
$$  \exp(2\Gamma^*(X+Y))=\frac{1}{\Psi^*(X+Y)}=\frac{1}{\Psi^*(X)}+\frac{1}{\Psi^*(Y)}=\exp(2\Gamma^*(X))+\exp(2\Gamma^*(Y)).
$$
    \end{proof}

Except for the proof of the Blachman-Stam inequality and the proof of the Cramér-Rao inequality, all the following properties are proved following verbatim the computations of the previous section for $\Gamma(X)$ and $\Psi(X)$.
	\begin{proposition}\label{prop:Blachmann-Stam*}
	 	The Boolean entropy $\Gamma^*$ and Fisher information $\Psi^*$ satisfy the following properties  (where we assume that all appearing variables are self-adjoint and have symmetric probability measures in some non-commutative probability space):
	 	\begin{enumerate}
	 		\item We have the bound for $\Psi^*$ as follows:
	 		\begin{equation}\label{lowupboudnfisher*}
	 			 \Psi(X)\le \Psi^*(X).
	 		\end{equation}
	 		\item $\Gamma^*$ is upper semicontinuous with respect to the weak topology of probability measures with compact support, and $\Psi^*$ is lower semicontinuous.
	 		\item If $X$ and $Y$ are Boolean independent, then for any $\theta\in [0,1]$ we have the corresponding Shannon-Stam inequality and Blachmann-Stam inequality:
	 		\begin{align}
	 			&\Gamma^*(\sqrt{\theta}X + \sqrt{1-\theta}Y) \ge \theta\Gamma^*(X) + (1-\theta)\Gamma^*(Y)\label{SSI*};\\
	 			&\Psi^*(\sqrt{\theta}X + \sqrt{1-\theta}Y)\le \theta\Psi^*(X) + (1-\theta)\Psi^*(Y)\label{BSI*}.
	 		\end{align}
	 		
	 	\end{enumerate}
	\end{proposition}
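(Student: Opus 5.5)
Each item follows from tools already set up: the $\Psi$ bounds of Proposition~\ref{prop:infor property}, truncation arguments, and the Boolean Stam equality (Proposition~\ref{Corbooleanstam*}).

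\emph{Item (1).} This is the upper bound in \eqref{ineq:fisher bdd}. Writing $\Psi(X)$ through \eqref{eq:fisher_0^1} as $\iint\int_0^1 (tx^2+(1-t)y^2)^{-1}\,dt\,d\mu(x)d\mu(y)$, convexity of $s\mapsto 1/s$ gives $(tx^2+(1-t)y^2)^{-1}\le t x^{-2}+(1-t)y^{-2}$. Integrating in $t,x,y$ then gives $\Psi(X)\le \int x^{-2}\,d\mu=\Psi^*(X)$, with the convention $+\infty$ when $\mu$ charges $0$ or $x^{-2}$ is not integrable.

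\emph{Item (2).} I write $\Psi^*(\mu)=\sup_{\varepsilon>0}\int (\varepsilon+x^2)^{-1}\,d\mu(x)$ by monotone convergence. Each $x\mapsto(\varepsilon+x^2)^{-1}$ is bounded and continuous, so each functional is weakly continuous. A supremum of continuous functionals is lower semicontinuous, so $\Psi^*$ is lower semicontinuous. Since $\Gamma^*=-\frac12\log\Psi^*$ with $-\log$ decreasing and continuous on $(0,+\infty]$, $\Gamma^*$ is upper semicontinuous. Compact support is not even needed here.

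\emph{Item (3).} I use the Boolean Stam equality (Proposition~\ref{Corbooleanstam*}) and scaling rather than a transfer of the microstates proof. Set $a=\Psi^*(X)$ and $b=\Psi^*(Y)$; the cases $a$ or $b$ infinite are handled by reading $1/\infty=0$. Then
\[
\frac{1}{\Psi^*(\sqrt\theta X+\sqrt{1-\theta}Y)}=\frac{\theta}{a}+\frac{1-\theta}{b},
\]
so $\Psi^*(\sqrt\theta X+\sqrt{1-\theta}Y)$ is the weighted harmonic mean of $a,b$. The harmonic mean is at most the arithmetic mean $\theta a+(1-\theta)b$, which is \eqref{BSI*}. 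For \eqref{SSI*}, I take $-\frac12\log$ of the same identity:
\[
\Gamma^*(\sqrt\theta X+\sqrt{1-\theta}Y)=\tfrac12\log\bigl(\theta e^{2\Gamma^*(X)}+(1-\theta)e^{2\Gamma^*(Y)}\bigr).
\]
Concavity of $\log$ bounds this below by $\theta\Gamma^*(X)+(1-\theta)\Gamma^*(Y)$.

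The only mildly delicate point is the handling of infinite values: $\Psi^*=+\infty$, equivalently $\Gamma^*=-\infty$. The Stam equality already covers these cases through its monotone-convergence proof with the convention $1/\infty=0$. The resulting inequalities are then either trivial or reduce to the one-variable case, so I expect no real obstacle.
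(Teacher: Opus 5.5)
Your proposal is correct. For items (1) and (2) and for \eqref{BSI*} it is the paper's argument: Jensen for $s\mapsto 1/s$ in \eqref{eq:fisher_0^1}, the $\varepsilon$-regularisation by $\int(\varepsilon+x^2)^{-1}\,d\mu$, and the Boolean Stam equality combined with scaling and the harmonic--arithmetic mean inequality.

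The one genuine difference is \eqref{SSI*}. The paper says the remaining parts follow ``mutatis mutandis'' from the microstates proofs. It therefore implicitly obtains \eqref{SSI*} from \eqref{BSI*} by integrating along the Boolean heat flow, using Corollary~\ref{Entfisher*} and the Boolean divisibility of $\mathrm{b}$. You instead take $-\frac12\log$ of the Stam equality, which gives the entropy power equality $e^{2\Gamma^*(\sqrt\theta X+\sqrt{1-\theta}Y)}=\theta e^{2\Gamma^*(X)}+(1-\theta)e^{2\Gamma^*(Y)}$, and conclude by concavity of $\log$.

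Your route is shorter. It also treats the infinite cases directly, with the convention $1/\infty=0$, without any bookkeeping along the flow. The paper's route is less direct here, but it is the one that also works in the microstates setting, where no closed-form Stam identity is available.

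Two further remarks of yours are also correct. Upper semicontinuity of $\Gamma^*$ follows from lower semicontinuity of $\Psi^*$ through the decreasing map $-\frac12\log$. Compact support is unnecessary, because each $(\varepsilon+x^2)^{-1}$ is bounded and continuous.
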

	\begin{proof}We just need to justify the proof of the Blachmann-Stam inequality, as the other proofs are mutatis mutandis the same proof as for $\Psi$ and $\Gamma$. We use Voiculescu-Stam equality, the scaling property of $\Psi^*$ and Jensen inequality applied to the inverse function:
    	\begin{equation*}
		\Psi^*(\sqrt{\theta}X+\sqrt{1-\theta}Y)=\frac{1}{\frac{1}{\Psi^*(\sqrt{\theta}X)} + \frac{1}{\Psi^*(\sqrt{1-\theta}Y)}}=\frac{1}{\theta\frac{1}{\Psi^*(X)} + (1-\theta)\frac{1}{\Psi^*(Y)}}\leq \theta \Psi^*(X)+(1-\theta)\Psi^*(Y).
	\end{equation*}
	\end{proof}

	\begin{corollary}[Cramér-Rao inequality]Let $X$ be a self-adjoint operator of a non-commutative probability space $(\mathcal{M},\tau)$ with a symmetric law. We have
	\begin{equation}
		\Psi^*(X)\ge \frac{1}{\tau(X^2)}.
	\end{equation}
	\end{corollary}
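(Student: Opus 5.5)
The quickest route is to reduce to the Microstates Cramér–Rao inequality. By \eqref{lowupboudnfisher*} (equivalently, the upper bound in \eqref{ineq:fisher bdd}) we have $\Psi(X)\le \Psi^*(X)$. The Microstates Cramér–Rao inequality proved above gives $\Psi(X)\ge 1/\tau(X^2)$. Chaining the two yields $\Psi^*(X)\ge \Psi(X)\ge 1/\tau(X^2)$, which is the claim. No finiteness assumption is needed: if $\Psi^*(X)=+\infty$ the statement is trivial.

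The paper announced that the Cramér–Rao proof differs from the Microstates one, so I would also give a self-contained argument. It is a single application of Cauchy–Schwarz (or Jensen) with respect to $\mu=\mathcal{L}(X)$:
\begin{equation*}
1=\left(\int |x|\cdot\frac{1}{|x|}\,d\mu(x)\right)^2\le \int x^2\,d\mu(x)\int \frac{1}{x^2}\,d\mu(x)=\tau(X^2)\,\Psi^*(X).
\end{equation*}
Equivalently, apply Jensen's inequality to the convex function $t\mapsto 1/t$ under the law $\mu^{(2)}$ of $X^2$, giving $\int t^{-1}\,d\mu^{(2)}\ge 1/\int t\,d\mu^{(2)}$.

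The only point needing care is an atom of $\mu$ at $0$. In that case $\int x^{-2}\,d\mu=+\infty$ by convention, and the inequality holds trivially. Otherwise $\mu(\{0\})=0$, the integrand $|x|\cdot|x|^{-1}$ equals $1$ $\mu$-almost everywhere, and the Cauchy–Schwarz step is valid. There is no real obstacle beyond this measure-zero bookkeeping.

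For equality, Cauchy–Schwarz forces $|x|$ to be proportional to $1/|x|$ $\mu$-almost everywhere, i.e. $x^2$ constant. Combined with symmetry, this means $\mu=\tfrac12\delta_{-a}+\tfrac12\delta_a$, which could be recorded as a remark.
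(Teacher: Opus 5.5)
Your first argument is exactly the paper's proof, which chains $\Psi^*(X)\ge\Psi(X)$ with the Microstates Cramér--Rao inequality $\Psi(X)\ge 1/\tau(X^2)$. Your self-contained Cauchy--Schwarz argument is also correct, including the handling of an atom at $0$ and the equality case, and it is more elementary because it does not need the lower bound $\Psi(X)\ge\tau\bigl(2/(X^2+\tilde X^2)\bigr)$ on which the Microstates version rests.
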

	\begin{proof}
Because $\Psi^*(X)\ge \Psi(X)$, it follows from the Cramér-Rao inequality for $\Psi(X)$.
	\end{proof}

We have seen that in contrast with the Microstates framework, the Non-microstates
quantities behave linearly under Boolean convolution. This leads to the following result.

	    \begin{theorem}[Constancy in the Central Limit Theorem]\label{th:constancy}
        Let $\{X_i\}_{i\ge 1}\subset (\mathcal{M},\tau)$ be a sequence of i.i.d.\ (in the Boolean sense) random variables with law $\mu\in\probmsymr $. Define
        \begin{equation*}
            S_n:=\frac{X_1+\cdots+X_n}{\sqrt{n}}.
        \end{equation*}
        Then, for any $n\geq 1$, one has
        \[
        \Gamma^*(S_n)= \Gamma^*(X_1)
        \qquad\text{and}\qquad
        \Psi^*(S_n)= \Psi^*(X_1).
        \]
    \end{theorem}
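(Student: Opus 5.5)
The plan is to deduce the statement directly from the Boolean Stam equality (Proposition~\ref{Corbooleanstam*}) together with the scaling property $\Psi^*(\lambda X)=\lambda^{-2}\Psi^*(X)$. After that, the entropy statement follows from the identity $\Gamma^*(X)=-\frac12\log\Psi^*(X)$. The only genuine input is the exact additivity of $1/\Psi^*$ under Boolean convolution.

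First I will extend the Stam equality from two to $n$ Boolean independent variables by induction. If $X_1,\dots,X_n$ are Boolean independent, then $X_1+\cdots+X_{n-1}$ and $X_n$ are Boolean independent. At the level of laws, $\mathcal{L}(X_1+\cdots+X_n)=\mathcal{L}(X_1+\cdots+X_{n-1})\uplus\mathcal{L}(X_n)$ by associativity of $\uplus$. The partial sums have symmetric laws, since Boolean convolution preserves symmetry. The proof of Proposition~\ref{Corbooleanstam*} only uses the identity $G_{X^2+Y^2}$ expressed via $K$-transforms at the level of laws, so it applies to any pair of symmetric laws and their Boolean convolution. We therefore obtain
\begin{equation*}
\frac{1}{\Psi^*(X_1+\cdots+X_n)}=\sum_{i=1}^n\frac{1}{\Psi^*(X_i)}=\frac{n}{\Psi^*(X_1)},
\end{equation*}
using identical distribution.

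Next, scaling by $\lambda=n^{-1/2}$ gives
\begin{equation*}
\Psi^*(S_n)=n\,\Psi^*(X_1+\cdots+X_n)=n\cdot\frac{\Psi^*(X_1)}{n}=\Psi^*(X_1).
\end{equation*}
Applying $-\frac12\log$ yields $\Gamma^*(S_n)=\Gamma^*(X_1)$. Equivalently, one could iterate the entropy power equality and use $\Gamma^*(\lambda X)=\Gamma^*(X)+\log|\lambda|$.

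The only point needing care is the infinite case. If $\Psi^*(X_1)=+\infty$, the Stam equality holds with the convention $1/\infty=0$. This is exactly what the monotone-convergence proof of Proposition~\ref{Corbooleanstam*} gives: the formula with $\varepsilon>0$ is between finite quantities, and letting $\varepsilon\to0$ gives $1/\Psi^*(X+Y)=0$ when both summands have infinite $\Psi^*$. Hence $\Psi^*(S_n)=+\infty$ and $\Gamma^*(S_n)=-\infty$, so the equalities still hold. Since $\Psi^*>0$ always, there is no issue at $0$. I do not expect any real obstacle here; the argument is a direct bookkeeping consequence of the earlier identities.
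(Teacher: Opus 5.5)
Your proposal is correct and follows the paper's proof exactly: iterate the Boolean Stam equality to get $1/\Psi^*(X_1+\cdots+X_n)=n/\Psi^*(X_1)$, rescale by $n^{-1/2}$, and then apply $\Gamma^*=-\frac12\log\Psi^*$. Your extra care about working at the level of laws and about the case $\Psi^*(X_1)=+\infty$ (via the convention $1/\infty=0$ in the monotone-convergence limit) fills in details the paper leaves implicit.
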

\begin{proof}
Since $\Gamma^*(X)=-\frac{1}{2}\log(\Psi^*(X))$, we just need to check it for the Fisher information: it is a consequence of Voiculescu-Stam equality combined with the scaling property of $\Psi^*$.
\end{proof}
Overall, the Non-microstates framework exhibits a striking rigidity: many
functional inequalities that are genuinely analytic in the Microstates
setting become explicit algebraic identities or elementary bounds.
\subsection{Functional inequalities}\label{sec:nmfunctional_inequalities}In classical and in free probability, the Fisher information \textit{relative} to the quadratic potential is equal to $\|\xi(x)-x\|_{L^2(\mu)}^2$ where $\xi$ is the score function in the classical case and $\xi$ is the conjugate variable in the free case. Following the same idea, and because $x\mapsto 1/x$ is the score function in the Boolean framework, we define now the Boolean Fisher information relative to the Rademacher variable  as $\|1/x-x\|_{L^2(\mu)}^2=\|1/x\|_{L^2(\mu)}^2+\|x\|_{L^2(\mu)}^2-2$.

        \begin{definition}
let $X$ be a self-adjoint operator of a non-commutative probability space $(\mathcal{M},\tau)$ with law $\mu\in\mathcal{M}^{sym}(\R)$. The (Non-microstates) \textit{Boolean Fisher information $\Psi^*(X|\mathbb{\mathrm{b}})$ of $X$ relative to $\mathbb{\mathrm{b}}$}  is defined as
\begin{equation}
		\Psi^*(X|\mathbb{\mathrm{b}}):=\int \frac{1}{x^2}\,d\mu(x)+ \int x^2\,d\mu(x)-2\in [0,+\infty].
	\end{equation}
    More generally, for any $\mu \in\mathcal{M}^{sym}(\R)$, we define the (Non-microstates) \textit{Boolean Fisher information $\Psi^*(\mu|\mathbb{\mathrm{b}})$ of $\mu$ relative to $\mathbb{\mathrm{b}}$}   as
\begin{equation*}
		\Psi^*(\mu|\mathbb{\mathrm{b}}):=\int \frac{1}{x^2}\,d\mu(x)+ \int x^2\,d\mu(x)-2\in [0,+\infty].
	\end{equation*}
\end{definition}
We would like to define the relative Boolean entropy as in the Microstates case, by $\Gamma^*(X|\mathbb{\mathrm{b}})=-\Gamma^*(X)+\frac{1}{2}\tau\left(X^2\right)-1/2$, so that its derivative along the Ornstein–Uhlenbeck flow equals minus the relative Fisher information. Let us see how it behaves with respect to the Ornstein-Uhlenbeck process. Starting from a self-adjoint operator $X$ of a non-commutative probability space $(\mathcal{M},\tau)$ whose law is symmetric, we consider the variable $X_t:=e^{-t}X + \sqrt{1-e^{-2t}}B$ where $B\in \mathcal{M}$ is Boolean independent from $X$ and distributed according to the Rademacher law $\mathrm{b}.$ We denote by $\mu_{X_t}$ the law of $X_t$.  By the scaling property of $\Gamma$, we know that
	\begin{equation*}
		\Gamma^*(X_t)=\Gamma^*\left(X+\sqrt{e^{2t}-1}B\right)-t.
	\end{equation*}
	Thus, using the scaling property of $\Psi^*$, we have
	\begin{equation}\label{EntOU*}
		\frac{d}{dt}\Gamma^*(X_t)=e^{2t}\Psi^*\left(X+\sqrt{e^{2t}-1}B\right)-1= \Psi^*(X_t)-1.
	\end{equation}We get
	\begin{align}\label{relafisher*}
		\nonumber
		&\frac{d}{dt}\left(-\Gamma^*(X_t)+\frac{1}{2}\tau\left(X_t^2\right)-1/2\right)\\
        &= -\Psi^*(X_t) +1 - e^{-2t}\int x^2 d\mu(x) + e^{-2t}\nonumber \\
		&=  -\Psi^*(X_t|\mathbb{\mathrm{b}}).
	\end{align} This justifies the following definition.
 \begin{definition}
let $X$ be a self-adjoint operator of a non-commutative probability space $(\mathcal{M},\tau)$ with law $\mu\in\mathcal{M}^{sym}(\R)$. The \textit{(Non-microstates) Boolean entropy $\Gamma^*(X|\mathbb{\mathrm{b}})$ of $X$ relative to $\mathbb{\mathrm{b}}$}  is defined as
\begin{equation}\label{entropy*}
		\Gamma^*(X|\mathbb{\mathrm{b}}):=\frac{1}{2}\int x^2 \,d\mu(x)  +\frac{1}{2}\log\int \frac{1}{x^2}\,d\mu(x)-\frac{1}{2}\in [0,+\infty].
	\end{equation}
    More generally, for any $\mu \in\mathcal{M}^{sym}(\R)$, we define the \textit{(Non-microstates) Boolean entropy $\Gamma^*(\mu|\mathbb{\mathrm{b}})$ of $\mu$ relative to $\mathbb{\mathrm{b}}$}   as
\begin{equation*}
		\Gamma^*(\mu|\mathbb{\mathrm{b}}):=\frac{1}{2}\int x^2 \,d\mu(x)  +\frac{1}{2}\log \int \frac{1}{x^2}\,d\mu(x)-\frac{1}{2}\in [0,+\infty].
	\end{equation*}
\end{definition}

Let us compare the relative micro-state entropy with the Non-microstates one, showing the positivity of $\Gamma^*(X|\mathbb{\mathrm{b}})$.
    \begin{proposition}[Microstates  vs Non-microstates]\label{thm:micro-nonmicro}
	For any $\mu\in\mathcal{M}^{sym}(\R)$, we have
	\begin{equation*}
		\Gamma(X|\mathbb{\mathrm{b}})\leq \Gamma^*(X|\mathbb{\mathrm{b}})
\ \ \ \text{ and }\ \ \ 
\Psi(X|\mathbb{\mathrm{b}})\leq \Psi^*(X|\mathbb{\mathrm{b}}).
	\end{equation*}
	\end{proposition}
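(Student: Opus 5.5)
Both inequalities will be reduced to comparisons between the microstate and non-microstate quantities that are already in the paper. The second-moment and constant terms appear identically in the microstate and non-microstate relative quantities, so they cancel.

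\emph{Fisher information.} Subtracting the two definitions gives
\[
\Psi^*(X|\mathbb{\mathrm{b}})-\Psi(X|\mathbb{\mathrm{b}})=\Psi^*(X)-\Psi(X).
\]
The upper bound $\Psi(X)\le\tau(X^{-2})=\Psi^*(X)$ in \eqref{ineq:fisher bdd} (equivalently \eqref{lowupboudnfisher*}) shows this difference is nonnegative. To avoid $\infty-\infty$, I treat the case $\Psi^*(X)=+\infty$ separately: there the inequality is trivial, and the case $\tau(X^2)=+\infty$ is also harmless.

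\emph{Entropy.} Here the difference reduces to
\[
\Gamma^*(X|\mathbb{\mathrm{b}})-\Gamma(X|\mathbb{\mathrm{b}})=\int\log|x|\,d\mu(x)+\frac12\log\int\frac{1}{x^2}\,d\mu(x).
\]
So I need $\frac12\log\int x^{-2}\,d\mu\ge -\int\log|x|\,d\mu=\frac12\int\log(x^{-2})\,d\mu$. This is Jensen's inequality for the concave function $\log$ applied to the random variable $x^{-2}$ under $\mu$: $\int\log(x^{-2})\,d\mu\le\log\int x^{-2}\,d\mu$.

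The only care needed is with infinite values. If $\int x^{-2}\,d\mu=+\infty$, then $\Gamma^*(X|\mathbb{\mathrm{b}})=+\infty$ and the inequality is trivial. If $\int x^{-2}\,d\mu<\infty$, then $\mu$ has no atom at $0$. The positive part of $\log(x^{-2})$ is then integrable, so Jensen applies in the extended sense. If the second moment is infinite, both sides are $+\infty$.

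The main obstacle is only this bookkeeping of $\pm\infty$; the inequalities themselves are immediate. As a byproduct, positivity of $\Gamma^*(X|\mathbb{\mathrm{b}})$ follows from $\Gamma(X|\mathbb{\mathrm{b}})\ge 0$.
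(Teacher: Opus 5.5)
Your proposal is correct and is the paper's proof: Jensen's inequality for the concave $\log$ applied to $x^{-2}$ gives the entropy comparison, and the upper bound $\Psi(X)\le\Psi^*(X)$ from \eqref{ineq:fisher bdd} gives the Fisher comparison once the common $\int x^2\,d\mu$ and constant terms cancel. Your handling of the infinite cases, which the paper leaves implicit, is sound: $\log^+(x^{-2})\le x^{-2}$ justifies the extended Jensen step, and when the second moment is infinite both sides equal $+\infty$.
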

In particular, $\Gamma^*(\mu|\mathbb{\mathrm{b}})$ provides a universal upper bound
for the Microstates relative entropy, hence positivity of the Non-microstates
functional is automatic once the Microstates one is known.
\begin{proof}
        The first inequality follows from Jensen inequality:
        $$-\int\log x^2\,d\mu(x) =\int\log 1/x^2\,d\mu(x) \leq \log \int 1/x^2\,d\mu(x).$$
For the second one, it is a consequence of $\Psi(X)\leq \Psi^*(X)$.
    \end{proof}
    
We have already proved the logarithmic Sobolev inequality (LSI) in the Microstates
framework. We now show that the same principle holds in the Non-microstates
setting.
\begin{theorem}[LSI]\label{thm:nmLSI}
		For any $\mu\in\mathcal{M}^{sym}(\R)$, we have
	\begin{equation*}
		\Gamma^*(X|\mathbb{\mathrm{b}})\leq \frac{1}{2}\Psi^*(X|\mathbb{\mathrm{b}})
	\end{equation*}
	\end{theorem}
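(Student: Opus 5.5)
The plan is to reduce the statement to the elementary inequality $\log a \le a-1$ applied to $a=\Psi^*(X)=\int x^{-2}\,d\mu(x)$. Write $m_2=\int x^2\,d\mu(x)$ and $a=\int x^{-2}\,d\mu(x)$. By the definitions,
\begin{equation*}
2\Gamma^*(X|\mathbb{\mathrm{b}})=m_2+\log a-1,\qquad \Psi^*(X|\mathbb{\mathrm{b}})=a+m_2-2 .
\end{equation*}
The desired inequality $2\Gamma^*(X|\mathbb{\mathrm{b}})\le \Psi^*(X|\mathbb{\mathrm{b}})$ is therefore equivalent, after cancelling $m_2$, to $\log a-1\le a-2$, that is, $\log a\le a-1$. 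This holds for every $a\in(0,+\infty)$ by concavity of the logarithm, with equality only at $a=1$.

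The only care needed is with infinite values. If $m_2=+\infty$, then both sides are $+\infty$, since $\log a>-\infty$ and $a>0$; so the inequality holds in $[0,+\infty]$. The same is true if $a=+\infty$ with $m_2<\infty$. Otherwise, both quantities are finite and the cancellation above is legitimate. Note that $a>0$ always, because $\mu$ is a probability measure, so $\log a$ is well defined.

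As an optional remark, the equality case is $a=1$ together with the condition making both sides equal. Combined with the Cramér–Rao inequality ($a\,m_2\ge 1$), this is consistent with $\mu=\mathbb{\mathrm{b}}$.

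Essentially no step is an obstacle; the main point is to make sure the $+\infty$ conventions are handled so that no $\infty-\infty$ expression appears when subtracting $m_2$.
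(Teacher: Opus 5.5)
Your proof is correct and follows the paper's argument: after cancelling $m_2(\mu)$, both reduce the inequality to $\log a\le a-1$ with $a=\int x^{-2}\,d\mu(x)$, and you treat the infinite cases more carefully than the paper's one-line proof (whose displayed inequality has a sign slip, $+\frac12$ where $-\frac12$ is meant on the left). On your optional remark: equality holds exactly when $m_2(\mu)<\infty$ and $\int x^{-2}\,d\mu(x)=1$, which covers many symmetric measures besides $\mathbb{\mathrm{b}}$, so the Cramér--Rao inequality does not single out $\mathbb{\mathrm{b}}$ as the equality case.
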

    Note that this inequality is sharp, with equality at the Rademacher law. As in the Microstates case, following the same proofs, a direct consequence is the exponential decay of relative entropy along the Ornstein-Uhlenbeck process $X_t$:
    			\begin{equation*}
				 \Gamma^*(X_t| \mathbb{\mathrm{b}})\le e^{-2t} \Gamma^*(X| \mathbb{\mathrm{b}}).
			\end{equation*}
    \begin{proof}
    In contrast with the Microstates case, the logarithmic Sobolev inequality
here reduces to an elementary convexity inequality.    It is just the inequality $$\frac{1}{2}\log \int \frac{1}{x^2}\,d\mu(x)+\frac{1}{2}\leq\frac{1}{2}  \int \frac{1}{x^2}\,d\mu(x)-\frac{1}{2}-\frac{1}{2}= \frac{1}{2}  \int \frac{1}{x^2}\,d\mu(x)-1.$$
    \end{proof}

Combining the two previous results, we get a log-Sobolev inequality which connects the Microstate entropy $\Gamma$ with the Non-microstate Fisher information $\Psi^*$:
\begin{equation*}\label{LSI:cross}
		\Gamma(X|\mathbb{\mathrm{b}})\leq \frac{1}{2}\Psi^*(X|\mathbb{\mathrm{b}})
	\end{equation*}
One can refine this log-Sobolev inequality by interpolating entropy with transport and Fisher information. This leads to a Boolean HWI inequality, in the spirit of Otto–Villani, from which the log-Sobolev inequality above follows immediately.
In the classical~\cite{otto2000generalization} and free settings~\cite{popescu2013local}, HWI inequalities takes the exact same form (with the corresponding entropy and Fisher information). They play a central role in connecting functional inequalities to transportation estimates.

\begin{theorem}[HWI inequality]\label{thm:HWI_sym}
Let $\mu\in\mathcal{M}^{sym}(\R)$ be a symmetric probability measure with $m_2(\mu)<\infty$.
Then one has the following HWI inequality:
\begin{equation}\label{eq:HWI_sym}
\Gamma(\mu|\mathbb{\mathrm{b}})
\leq
W_2(\mu,\mathbb{\mathrm{b}})\,\sqrt{\Psi^*(\mu|\mathbb{\mathrm{b}})}
-\frac12\,W_2(\mu,\mathbb{\mathrm{b}})^2.
\end{equation}
\end{theorem}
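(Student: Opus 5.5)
The plan is to imitate the classical proof of HWI, where one uses displacement convexity along the $W_2$-geodesic. Here the geodesic is explicit because the optimal coupling with $\mathrm{b}$ is known. From the proof of Theorem~\ref{thm:TTI}, we have $W_2(\mu,\mathrm{b})^2=\int(|x|-1)^2\,d\mu(x)$. Moreover, both $\Gamma(\mu|\mathrm{b})$ and $\Psi^*(\mu|\mathrm{b})$ are integrals of functions of $|x|$ against $\mu$. Indeed, writing $\varphi(y):=\frac12 y^2-\log y-\frac12$ for $y>0$, we have
\[
\Gamma(\mu|\mathrm{b})=\int\varphi(|x|)\,d\mu(x),\qquad
\Psi^*(\mu|\mathrm{b})=\int\Bigl(|x|-\frac1{|x|}\Bigr)^2\,d\mu(x),
\]
the latter because $\int (1/x^2)\,d\mu+\int x^2\,d\mu-2=\int(1/x-x)^2\,d\mu$. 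Note that $\varphi'(y)=y-1/y$, so $\Psi^*(\mu|\mathrm{b})=\|\varphi'(|x|)\|_{L^2(\mu)}^2$, exactly as in the classical picture.

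First I dispose of degenerate cases. If $\Psi^*(\mu|\mathrm{b})=+\infty$, which includes the case $\mu(\{0\})>0$, the right-hand side is $+\infty$ and there is nothing to prove (since $m_2(\mu)<\infty$, $W_2$ is finite). So assume $\Psi^*(\mu|\mathrm{b})<\infty$; then $\mu$ has no atom at $0$ and all integrals below are finite.

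The key step is a pointwise inequality coming from the strong convexity of $\varphi$. Since $\varphi''(y)=1+1/y^2\geq 1$ on $(0,\infty)$, a Taylor expansion around $y$, evaluated at the point $1$, gives
\[
0=\varphi(1)\geq \varphi(y)+\varphi'(y)(1-y)+\tfrac12(1-y)^2 .
\]
This is the Boolean analogue of $1$-convexity along the geodesic from $\mu$ to $\mathrm{b}$, which moves $|x|$ linearly to $1$. Rearranging yields, for every $y>0$,
\[
\varphi(y)\leq (y-1)\Bigl(y-\frac1y\Bigr)-\frac12 (y-1)^2 .
\]

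Finally, I apply this with $y=|x|$ and integrate against $\mu$. The last term gives $-\frac12 W_2(\mu,\mathrm{b})^2$. The cross term is bounded by Cauchy--Schwarz in $L^2(\mu)$:
\[
\int(|x|-1)\Bigl(|x|-\frac1{|x|}\Bigr)d\mu(x)\leq \Bigl(\int(|x|-1)^2\,d\mu\Bigr)^{1/2}\Bigl(\int\Bigl(|x|-\frac1{|x|}\Bigr)^2d\mu\Bigr)^{1/2}=W_2(\mu,\mathrm{b})\sqrt{\Psi^*(\mu|\mathrm{b})}.
\]
This gives \eqref{eq:HWI_sym}.

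The only delicate point is integrability: the negative part of $\varphi(|x|)$ and the cross term must be well defined. Both follow from $m_2(\mu)<\infty$ and $\int x^{-2}\,d\mu<\infty$, via $|\log y|\leq y^2+y^{-2}$. I do not expect a genuine obstacle; the main insight is recognizing that the Non-microstates Fisher information is exactly the $L^2(\mu)$-norm of $\varphi'(|x|)$.
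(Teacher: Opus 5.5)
Your proposal is correct and follows essentially the same route as the paper: the same potential $V(r)=\tfrac12 r^2-\log r-\tfrac12$, the same pointwise bound $V(r)\le (r-\tfrac1r)(r-1)-\tfrac12(r-1)^2$, then integration against $\mu$ and Cauchy--Schwarz. The only difference is how the pointwise bound is justified. You use a Taylor argument from $\varphi''\ge 1$, while the paper checks directly that the slack equals $\log r+\tfrac1r-1\ge 0$. This slack is exactly the remainder contributed by the extra $1/s^2$ in $\varphi''$.
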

\label{rem:HWI_implies_LSI_star}
By the elementary inequality $ab-\frac12a^2\le \frac12 b^2$, the HWI inequality implies
\[
\Gamma(\mu|\mathbb{\mathrm{b}})\le \frac12\,\Psi^*(\mu|\mathbb{\mathrm{b}}).
\]
\begin{proof}
If $\int x^{-2}\,d\mu(x)=+\infty$, then $\Psi^*(\mu|\mathbb{\mathrm{b}})=+\infty$ and there is nothing to prove. We may therefore assume $\int x^{-2}\,d\mu(x)<+\infty$, which also implies $\Gamma(\mu|\mathbb{\mathrm{b}})<+\infty$ and $\Psi^*(\mu|\mathbb{\mathrm{b}})<+\infty$.

Since $\mu$ is symmetric, the optimal coupling with $\mathbb{\mathrm{b}}$ gives
\[
W_2(\mu,\mathbb{\mathrm{b}})^2=\int\bigl(|x|-1\bigr)^2\,d\mu(x).
\]
The two other quantities appearing in \eqref{eq:HWI_sym} are
\begin{align*}
\Gamma(\mu|\mathbb{\mathrm{b}})
&=\frac12\int x^2\,d\mu(x)-\int\log|x|\,d\mu(x)-\frac12,\\
\Psi^*(\mu|\mathbb{\mathrm{b}})
&=\int\bigl(x^2+x^{-2}-2\bigr)\,d\mu(x).
\end{align*}
For $r>0$ define the potential
\[
V(r):=\frac12 r^2-\log r-\frac12.
\]
Then, setting $r=|x|$,
\[
\Gamma(\mu|\mathbb{\mathrm{b}})=\int V(|x|)\,d\mu(x).
\]
It is straightforward to verify the pointwise inequality
\[
V(r)\le \Bigl(r-\frac1r\Bigr)(r-1)-\frac12(r-1)^2,\qquad r>0,
\]
because
\[
\Bigl(r-\frac1r\Bigr)(r-1)-\frac12(r-1)^2 - V(r) = \log r + \frac1r -1 \ge 0
\]
Integrating with respect to $\mu$,
\[
\Gamma(\mu|\mathbb{\mathrm{b}})
\le
\int\Bigl(|x|-\frac1{|x|}\Bigr)\bigl(|x|-1\bigr)\,d\mu(x)
-\frac12\,W_2(\mu,\mathbb{\mathrm{b}})^2.
\]

By Cauchy--Schwarz,
\[
\int\Bigl(|x|-\frac1{|x|}\Bigr)\bigl(|x|-1\bigr)\,d\mu(x)
\le
\Bigl(\int\bigl(|x|-1\bigr)^2\,d\mu(x)\Bigr)^{1/2}
\Bigl(\int\Bigl(|x|-\frac1{|x|}\Bigr)^2\,d\mu(x)\Bigr)^{1/2}.
\]
The first factor on the right is exactly $W_2(\mu,\mathbb{\mathrm{b}})$. For the second,
\[
\int\Bigl(|x|-\frac1{|x|}\Bigr)^2\,d\mu(x)
=\int\bigl(x^2+x^{-2}-2\bigr)\,d\mu(x)=\Psi^*(\mu|\mathbb{\mathrm{b}}).
\]
Therefore the integral is at most
\[
W_2(\mu,\mathbb{\mathrm{b}})\sqrt{\Psi^*(\mu|\mathbb{\mathrm{b}})}.
\]Substituting back yields the desired HWI inequality \eqref{eq:HWI_sym}.
\end{proof}

\section{Stein's method for Boolean independence}
In this section we do not work inside a non-commutative probability space.
Instead, we develop a Stein-type method for the Rademacher distribution on a classical probability space $(\Omega, \mathcal{A},\mathbb{P})$, which will later be applied to Boolean convolution.
\subsection{Stein's method}\label{sec:Steinmethod}
First of all, let us briefly recall Stein’s method \cite{chen2010normal,goldschmidt2000chen,ross2011fundamentals}, which allows one to bound distances of the form
$$\sup_{\varphi}|\mathbb{E}[\varphi(X)]-\mathbb{E}[\varphi(B)]|$$
for a reference variable $B$. The steps are as follows:
\begin{itemize}\item Find an operator $\mathcal{D}$ such that $\mathbb{E}[\mathcal{D}f(B)]=0$. It is called the \textit{Stein operator} of $B$.
\item Solve the Stein equation
$\mathcal{D}f(x)= \varphi(x)$
for a fixed $\varphi$. Writing $f_\varphi$ the solution of $\mathcal{D}f(x)= \varphi(x)-\mathbb{E}[\varphi(B)]$, we have $$\sup_{\varphi}|\mathbb{E}[\varphi(X)]-\mathbb{E}[\varphi(B)]|=\sup_{\varphi}|\mathbb{E}[\mathcal{D}f_\varphi(X)]|$$
where the supremum is taken over a certain class of functions.
\item It remains to bound the quantities
$|\mathbb{E}[\mathcal{D}f_\varphi(X)]|$.
\end{itemize}
We now specialize Stein’s method to the Rademacher distribution.

$ $

\noindent \textbf{Step 1.}
Let $B$ be a random variable  with Rademacher distribution $\mathbb{\mathrm{b}}$. 
Since $B^2=1$, we have $B=1/B$ almost surely. As a consequence, if we define $D^\downarrow f(x)=\frac{f(x)-f(0)}{x}$ for measurable functions $f:\mathbb{R}\to \mathbb{R}$ which are differentiable at $0$ (with the continuous extension $D^\downarrow f(0)=f'(0)$),
we have
$$\mathbb{E}[B\cdot f(B)]=\mathbb{E}[D^\downarrow f(B)]$$ whenever all the terms are integrable.
We define the Stein operator $\mathcal{D}$ on measurable functions $f:\mathbb{R}\to \mathbb{R}$ which are differentiable at $0$ by
$$\mathcal{D}f(x)=D^\downarrow f(x)-xf(x),$$
in such a way that $\mathbb{E}[\mathcal{D}f(B)]=0$.

$ $

\noindent \textbf{Step 2.} Let us solve the Stein equation in the following proposition.
\begin{proposition}For any measurable function $\varphi:\mathbb{R}\to \mathbb{R}$ which is continuous at $0$ and such that $\varphi(1)=\varphi(-1)=0$, there exists a measurable function $f:\mathbb{R}\to \mathbb{R}$, such that $f(0)=0$ and such that
$$\varphi=\mathcal{D}f.$$ 
Furthermore, whenever $\varphi$ is $1$-Lipschitz, we have
 $\|D^\downarrow f\|_\infty\leq 1.$
\end{proposition}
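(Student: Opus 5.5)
We solve the Stein equation explicitly. Imposing $f(0)=0$, the operator reduces to $\mathcal{D}f(x)=\frac{f(x)}{x}-xf(x)=f(x)\,\frac{1-x^2}{x}$ for $x\neq 0$, and $\mathcal{D}f(0)=f'(0)$ at $x=0$. This is a purely pointwise (algebraic) equation, so the natural candidate is
\[
f(x):=\frac{x\,\varphi(x)}{1-x^2}\quad (x\neq 0,\pm1),\qquad f(0)=0,\qquad f(\pm1)=0 .
\]
Then $f$ is measurable, as a product and quotient of measurable functions on the measurable set $\mathbb{R}\setminus\{0,\pm1\}$, with arbitrary constant values on the remaining three points. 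Moreover $\mathcal{D}f(x)=\varphi(x)$ for $x\notin\{0,\pm1\}$ by construction. At $x=\pm1$ we have $\mathcal{D}f(\pm1)=\pm f(\pm1)\mp f(\pm 1)=0=\varphi(\pm1)$, using the hypothesis $\varphi(\pm1)=0$; this is exactly why the vanishing condition is imposed.

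At $x=0$ we must check that $f$ is differentiable at $0$ and that $f'(0)=\varphi(0)$. Since $f(x)/x=\varphi(x)/(1-x^2)\to\varphi(0)$ as $x\to0$ by continuity of $\varphi$ at $0$, we get $f'(0)=\varphi(0)$, and hence $\mathcal{D}f(0)=D^\downarrow f(0)=\varphi(0)$. So $\varphi=\mathcal{D}f$ everywhere.

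For the bound, note that $D^\downarrow f(x)=f(x)/x=\varphi(x)/(1-x^2)$ for $x\notin\{0,\pm1\}$, that $D^\downarrow f(\pm1)=0$, and that $D^\downarrow f(0)=\varphi(0)$. If $\varphi$ is $1$-Lipschitz with $\varphi(\pm1)=0$, then $|\varphi(x)|\le \min(|x-1|,|x+1|)$. The main step is the elementary inequality
\[
\min(|x-1|,|x+1|)\le |1-x^2|=|x-1|\,|x+1|.
\]
To see it, assume by symmetry $x\ge0$, so the minimum is $|x-1|$ and $|x+1|=x+1\ge1$; hence $|x-1|\le|x-1|(x+1)$. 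This gives $|D^\downarrow f(x)|\le1$ for $x\neq0,\pm1$. At $0$, $|\varphi(0)|\le|0-1|=1$. Hence $\|D^\downarrow f\|_\infty\le1$.

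The only delicate point is the behaviour near the singular points $x=\pm1$ of the formula. The Lipschitz bound handles it, since $|\varphi(x)|/|1-x^2|\le 1/|x+1|$ near $x=1$, so $D^\downarrow f$ stays bounded there even though $f$ itself need not be continuous at $\pm1$. The statement does not require continuity there, so this causes no difficulty.
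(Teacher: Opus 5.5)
Your proposal is correct and follows essentially the same route as the paper's proof. Both use the explicit pointwise solution $f(x)=x\varphi(x)/(1-x^2)$ with $f(\pm1)=0$, and both get the bound by applying the Lipschitz condition at the nearer of $\pm1$, which yields $|D^\downarrow f(x)|\le 1/(1+|x|)\le 1$. Your treatment of the points $0$ and $\pm1$ is more explicit than the paper's.
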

\begin{proof}It suffices to set $f(1)=f(-1)=0$ and $f(x)=x\frac{\varphi(x)}{1-x^2}$ if $x\neq \pm 1$. The assumption that 
$\varphi$ is continuous at $0$ ensures that 
$f$ is differentiable at 
$0$. The bound on $D^\downarrow f$ is because  $$|D^\downarrow f(x)|=\left|\frac{1}{1+x}\right|\left|\frac{\varphi(x)-\varphi(1)}{x-1}\right|\leq 1$$
if $x\geq 0$ and
$$|D^\downarrow f(x)|=\left|\frac{1}{1-x}\right|\left|\frac{\varphi(x)-\varphi(-1)}{x-(-1)}\right|\leq 1$$
if $x< 0$.
\end{proof}

$ $

\noindent \textbf{Step 3.} We now apply the Stein equation to control 
the $1$-Wasserstein distance $W_1$ between a random variable $X\in L^1$ and $B$. By the Kantorovich duality, we have
$$W_1(X,B)=\sup_{\varphi\ 1-\text{Lipschitz}}|\mathbb{E}[\varphi(X)]-\mathbb{E}[\varphi(B)]|.$$
Asume that $\mathbb{E}[X]=0$. Since $B$ is centered, we can write
$$
\mathbb{E}[\varphi(X)]-\mathbb{E}[\varphi(B)]=\mathbb{E}[\tilde{\varphi}(X)]-\mathbb{E}[\tilde{\varphi}(B)]$$
for any $\tilde{\varphi}$ such that $\varphi-\tilde{\varphi}$ is affine. In particular, we can choose $\varphi$ such that $\varphi(1)=\varphi(-1)=0$ and write
$$W_1(X,B)=\sup_{\substack{\varphi\ 1-\text{Lipschitz}\\ \varphi(1)=\varphi(-1)=0}}|\mathbb{E}[\varphi(X)]-\mathbb{E}[\varphi(B)]|.$$
Using the solution of the Stein equation yields the following lemma.
\begin{lemma}For any variable $X\in L^1$ such that $\mathbb{E}[X]=0$, we have
$$W_1(X,B)\leq \sup_{f}|\mathbb{E}[\mathcal{D}f(X)]|$$
where the supremum is taken over measurable functions $f$ which are differentiable at $0$, such that $f(0)=0$ and such that $\|D^\downarrow f\|_\infty\leq 1.$
\end{lemma}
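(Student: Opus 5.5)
The proof combines the reduction already made in Step 3 with the solution of the Stein equation from Step 2.

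Fix a $1$-Lipschitz function $\varphi$. Since $X$ and $B$ are both centered, and $\mathbb{E}[\psi(X)]=\mathbb{E}[\psi(B)]$ for every affine $\psi$, we may subtract the affine function interpolating $\varphi$ at $\pm1$. Concretely, set
$$\tilde\varphi(x)=\varphi(x)-\tfrac{\varphi(1)+\varphi(-1)}{2}-\tfrac{\varphi(1)-\varphi(-1)}{2}\,x.$$
Then $\tilde\varphi(\pm1)=0$, and $\mathbb{E}[\varphi(X)]-\mathbb{E}[\varphi(B)]=\mathbb{E}[\tilde\varphi(X)]-\mathbb{E}[\tilde\varphi(B)]=\mathbb{E}[\tilde\varphi(X)]$, because $\mathbb{E}[\tilde\varphi(B)]=\tfrac12(\tilde\varphi(1)+\tilde\varphi(-1))=0$.

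The one point needing care is that $\tilde\varphi$ is no longer $1$-Lipschitz in general: its Lipschitz constant can be as large as $1+\tfrac{|\varphi(1)-\varphi(-1)|}{2}\le 2$. We therefore should not apply the bound of the Proposition to $\tilde\varphi$ itself. Instead, apply the Proposition's construction directly. Define $f(x)=x\tilde\varphi(x)/(1-x^2)$ for $x\neq\pm1$ and $f(\pm1)=0$. Then $f(0)=0$, $f$ is differentiable at $0$ (by Lipschitz continuity of $\varphi$, hence continuity at $0$), and $\mathcal{D}f=\tilde\varphi$.

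It remains to bound $D^\downarrow f(x)=\tilde\varphi(x)/(1-x^2)$. For $x\ge0$, using $\tilde\varphi(1)=0$,
$$|D^\downarrow f(x)|=\frac{1}{1+x}\left|\frac{\varphi(x)-\varphi(1)}{x-1}-\frac{\varphi(1)-\varphi(-1)}{2}\right|\le\frac{2}{1+x}\le 2,$$
and symmetrically for $x<0$ using $\tilde\varphi(-1)=0$. This gives only $\|D^\downarrow f\|_\infty\le 2$, a factor of $2$ off the stated normalization.

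Here I would split on the size of $\varphi(1)-\varphi(-1)$. If the Step 3 reduction is meant literally, namely that the supremum over $1$-Lipschitz $\varphi$ with $\varphi(\pm1)=0$ already realizes $W_1(X,B)$, then $\tilde\varphi=\varphi$ is itself $1$-Lipschitz. The Proposition then gives $\|D^\downarrow f\|_\infty\le1$ directly, and
$$|\mathbb{E}[\varphi(X)]-\mathbb{E}[\varphi(B)]|=|\mathbb{E}[\varphi(X)]|=|\mathbb{E}[\mathcal{D}f(X)]|.$$
Taking the supremum over $\varphi$ and using the Kantorovich duality proves the lemma.

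The main obstacle is justifying that reduction: that one may restrict to $1$-Lipschitz $\varphi$ vanishing at $\pm1$ without losing the supremum. My first attempt would be to show that for the optimal dual potential one can arrange $\varphi(1)=\varphi(-1)$, using the freedom to add constants together with the two-atom structure of $\mathbb{E}[\varphi(B)]$. If that fails, I would accept a constant $2$ in the bound, or equivalently rescale the admissible class to $\|D^\downarrow f\|_\infty\le 2$.

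Integrability is not an issue. $D^\downarrow f$ is bounded, and $xf(x)=x\,D^\downarrow f(x)\cdot x$ grows at most like $x^2$. However, $\mathcal{D}f=\tilde\varphi$ is Lipschitz, so $\mathbb{E}|\mathcal{D}f(X)|<\infty$ follows from $X\in L^1$ alone.
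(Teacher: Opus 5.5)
You have not proved the lemma: neither branch of your case split closes the argument. The branch where the $1$-Lipschitz $\varphi$ with $\varphi(\pm1)=0$ already realize $W_1(X,B)$ is not available, because that reduction is false. It is also the reduction the paper asserts in Step~3 before invoking the Stein solution.

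For a counterexample, take $X=-\tfrac12$ with probability $\tfrac23$ and $X=1$ with probability $\tfrac13$; this $X$ is centered. For any $1$-Lipschitz $\varphi$ with $\varphi(\pm1)=0$ one gets $\mathbb{E}[\varphi(X)]-\mathbb{E}[\varphi(B)]=\tfrac23\varphi(-\tfrac12)$, and $|\varphi(-\tfrac12)|\le\tfrac12$. So the restricted supremum is $\tfrac13$. On the other hand, $\varphi(x)=-|x+\tfrac12|$ gives $\mathbb{E}[\varphi(X)]-\mathbb{E}[\varphi(B)]=\tfrac12$, and indeed $W_1(X,B)=\tfrac12$.

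Your idea of arranging $\varphi(1)=\varphi(-1)$ for an optimal potential by adding constants cannot work, since constants do not change $\varphi(1)-\varphi(-1)$. The other branch, with $\|D^\downarrow f\|_\infty\le 2$, only proves the lemma with an extra factor $2$.

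The missing idea is that your $\tilde\varphi$ is the right object. Only your bound on $D^\downarrow f=\tilde\varphi/(1-x^2)$ is lossy, because it uses one anchor point at a time. Use both anchors at once. With $b=\tfrac{\varphi(1)-\varphi(-1)}{2}\in[-1,1]$ and $x\in[-1,1]$,
\[
\tilde\varphi(x)=\varphi(x)-\varphi(1)+b(1-x)\le(1+b)(1-x),\qquad
\tilde\varphi(x)=\varphi(x)-\varphi(-1)-b(1+x)\le(1-b)(1+x).
\]
The first right-hand side is $\le 1-x^2$ when $x\ge b$, and the second when $x\le b$.

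Applying this to $-\varphi$, which turns $\tilde\varphi$ into $-\tilde\varphi$ and $b$ into $-b$, gives $-\tilde\varphi(x)\le 1-x^2$ on $[-1,1]$. For $|x|\ge1$ your one-anchor estimate already suffices: $|\tilde\varphi(x)|\le(1+|b|)(|x|-1)\le 2(|x|-1)\le x^2-1$.

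Hence $|\tilde\varphi|\le|1-x^2|$ on all of $\mathbb{R}$. So your $f(x)=x\tilde\varphi(x)/(1-x^2)$, with $f(\pm1)=0$, satisfies $\|D^\downarrow f\|_\infty\le 1$. Then $\mathbb{E}[\varphi(X)]-\mathbb{E}[\varphi(B)]=\mathbb{E}[\tilde\varphi(X)]=\mathbb{E}[\mathcal{D}f(X)]$ for every $1$-Lipschitz $\varphi$, and Kantorovich duality gives the lemma with no reduction needed. The bound is sharp: in the example above, $\tilde\varphi(-\tfrac12)=\tfrac34=1-\tfrac14$.
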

We obtain the following bound on the Wasserstein distance.
\begin{proposition}For any variable $X\in L^2$, we have
$$W_1(X,B)\leq |\mathbb{E}[X]|+\mathbb{E}\left[\left|(X-\mathbb{E}[X])^2-1\right|\right].$$
In particular, if $\mathbb{E}[X]=0$,
$$W_1(X,B)\leq \mathbb{E}\left[\left|X^2-1\right|\right].$$
\end{proposition}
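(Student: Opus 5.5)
We first reduce to the centered case. Write $m=\mathbb{E}[X]$ and $Y=X-m$. Since translation by $m$ moves every point by $|m|$, the synchronous coupling gives $W_1(X,Y)\le |m|$. The triangle inequality then yields $W_1(X,B)\le |m|+W_1(Y,B)$. It therefore suffices to prove $W_1(Y,B)\le \mathbb{E}[|Y^2-1|]$ for a centered $Y\in L^2$; the case $\mathbb{E}[X]=0$ is exactly this statement.

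For centered $Y$, we apply the preceding lemma, which gives $W_1(Y,B)\le \sup_f |\mathbb{E}[\mathcal{D}f(Y)]|$. The supremum runs over $f$ differentiable at $0$ with $f(0)=0$ and $\|D^\downarrow f\|_\infty\le 1$. For such $f$ we have $D^\downarrow f(x)=f(x)/x$, so $f(x)=x\,D^\downarrow f(x)$ for all $x$, including $x=0$. Writing $g:=D^\downarrow f$, which is bounded by $1$, the Stein operator becomes
\[
\mathcal{D}f(x)=g(x)-x\cdot x g(x)=(1-x^2)\,g(x).
\]
Hence
\[
|\mathbb{E}[\mathcal{D}f(Y)]|=|\mathbb{E}[(1-Y^2)g(Y)]|\le \|g\|_\infty\,\mathbb{E}[|Y^2-1|]\le \mathbb{E}[|Y^2-1|].
\]
All terms are integrable because $Y\in L^2$ and $g$ is bounded. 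Taking the supremum over $f$ gives the claim.

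The computation itself is immediate. The main point to check is that the lemma applies, namely that the Stein solutions constructed in Step 2 satisfy $f(0)=0$ and $\|D^\downarrow f\|_\infty\le 1$. This was established in the proposition of Step 2, together with the observation that we may normalize $\varphi(\pm1)=0$ when the means agree. Integrability of $\varphi(Y)$ for $1$-Lipschitz $\varphi$ follows from $Y\in L^1$, so the Kantorovich duality step is legitimate.
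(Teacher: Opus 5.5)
Your argument is the paper's own proof. You use the same triangle-inequality reduction to the centered case, then the lemma together with the identity $\mathcal{D}f=(1-x^2)D^\downarrow f$ for $f(0)=0$, and granted the lemma this part is correct. The problem is your last paragraph, where you justify the lemma exactly as the paper's Step~3 does; that justification fails.

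Normalizing $\varphi(\pm1)=0$ forces $\psi(x)=\varphi(x)-a-bx$ with $a=\frac{\varphi(1)+\varphi(-1)}{2}$ and $b=\frac{\varphi(1)-\varphi(-1)}{2}$, since an affine function is determined by its values at $\pm1$. But $\psi$ is in general only $(1+|b|)$-Lipschitz: $\varphi(x)=\min(x,2-x)$ gives $\psi(x)=-2(x-1)_+$. So the Step~2 proposition does not apply to $\psi$, and its argument only yields $\|D^\downarrow f\|_\infty\le 2$, i.e.\ $W_1(X,B)\le 2\,\mathbb{E}|X^2-1|$. In fact the identity $W_1(X,B)=\sup_{\varphi\ 1\text{-Lip},\,\varphi(\pm1)=0}|\mathbb{E}\varphi(X)-\mathbb{E}\varphi(B)|$ is false. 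Take the centered law $0.55\,\delta_{-1}+0.35\,\delta_{1}+0.1\,\delta_{2}$. Then $W_1=0.2$ (move $0.1$ from $2$ to $1$ and $0.05$ from $-1$ to $1$), while the right-hand side is $\sup 0.1\,|\varphi(2)|=0.1$.

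The lemma, and hence the statement, is still true; what you need is the sharper pointwise bound $|\psi(x)|\le|1-x^2|$.

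For $x\ge1$, we have $|\psi(x)|\le|\varphi(x)-\varphi(1)|+|b|(x-1)\le 2(x-1)\le x^2-1$, and the case $x\le-1$ is symmetric.

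For $|x|\le1$, comparing $\varphi(x)$ with $\varphi(1)$ and with $\varphi(-1)$ gives
\[
-\min\{(1-b)(1-x),(1+b)(1+x)\}\le\psi(x)\le\min\{(1+b)(1-x),(1-b)(1+x)\}.
\]
Each minimum is at most $(1-x)(1+x)$. For the upper one, either $1+b\le 1+x$, so the first entry is at most $(1+x)(1-x)$, or $1-b<1-x$, so the second entry is. The lower one is analogous.

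Consequently $g:=\psi/(1-x^2)$, with $g(\pm1):=0$, satisfies $|g|\le 1$. The function $f(x):=x\,g(x)$ is admissible in the lemma, since $f(0)=0$ and $f'(0)=\psi(0)$, and it satisfies $\mathcal{D}f=\psi$. For centered $X$ one gets directly $|\mathbb{E}\varphi(X)-\mathbb{E}\varphi(B)|=|\mathbb{E}\psi(X)|\le\mathbb{E}|X^2-1|$. Replacing your last paragraph by this argument makes the proof complete, and this route does not even need the Stein equation.
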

\begin{proof}
Because $W_1(X,B)\leq W_1(X,X-\mathbb{E}[X])+W_1(X-\mathbb{E}[X],B)=|\mathbb{E}[X]|+W_1(X-\mathbb{E}[X],B)$, it suffices to prove the inequality in the case where $X$ is centered.

Whenever $\mathbb{E}[X]=0$, we have
$$W_1(X,B)\leq \sup_{f}|\mathbb{E}[\mathcal{D}f(X)]|$$where the supremum is taken over measurable functions $f$ which are differentiable at $0$, such that $f(0)=0$ and such that $\|D^\downarrow f\|_\infty\leq 1.$ For such a function $f$, we have  \[
\mathcal D f(X)=(1-X^2)\,D^\downarrow f(X),
\] which allows to write
$$|\mathbb{E}[\mathcal{D}f(X)]|=|\mathbb{E}[(1-X^2)\cdot D^\downarrow f(X)]|\leq \|1-X^2\|_{L^1}\cdot \|D^\downarrow f\|_\infty\leq \mathbb{E}\left[\left|X^2-1\right|\right]$$
which implies that $W_1(X,B)\leq \mathbb{E}\left[\left|X^2-1\right|\right]$.
\end{proof}
We may now restate the previous bound at the level of laws, and get a quantitative version of the Boolean fourth moment theorem in \cite[Lemma 3.1]{Arizmendi2013}.
\begin{corollary}[Quantitative fourth moment theorem]Let $\mu\in \mathcal{M}(\mathbb{R})$ such that $m_1(\mu) = 0$, $m_2(\mu) = 1$, and $m_4(\mu) < +\infty$. We have
$$W_1(\mu,\mathbb{\mathrm{b}})\leq \sqrt{m_4(\mu)-1}.$$
\end{corollary}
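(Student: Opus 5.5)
The plan is to apply the preceding proposition directly: a random variable $X$ with law $\mu$ on a classical probability space satisfies $\mathbb{E}[X]=m_1(\mu)=0$, so $W_1(\mu,\mathbb{\mathrm{b}})=W_1(X,B)\leq \mathbb{E}\left[\left|X^2-1\right|\right]$. It then remains to bound this $L^1$ norm by an $L^2$ norm.

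First, the Cauchy--Schwarz inequality (equivalently, $\|\cdot\|_{L^1}\leq\|\cdot\|_{L^2}$ on a probability space) gives
\[
\mathbb{E}\left[\left|X^2-1\right|\right]\leq \left(\mathbb{E}\left[(X^2-1)^2\right]\right)^{1/2}.
\]
Second, we expand the square:
\[
\mathbb{E}\left[(X^2-1)^2\right]=m_4(\mu)-2m_2(\mu)+1=m_4(\mu)-1,
\]
using the normalization $m_2(\mu)=1$. Combining the two displays yields $W_1(\mu,\mathbb{\mathrm{b}})\leq\sqrt{m_4(\mu)-1}$.

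The expression under the square root is nonnegative because $m_4(\mu)\geq m_2(\mu)^2=1$ by Jensen's inequality, so the bound makes sense. There is no genuine obstacle; the only points to check are the integrability hypotheses of the proposition. Finiteness of $m_4(\mu)$ ensures that $X\in L^2$ and that $X^2-1\in L^2$, so both the proposition and Cauchy--Schwarz apply.
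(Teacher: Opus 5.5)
Your proposal is correct and follows the paper's own proof exactly: apply the preceding proposition to the centered variable $X\sim\mu$ to get $W_1(X,B)\leq \mathbb{E}\left[\left|X^2-1\right|\right]$, then use Cauchy--Schwarz and the expansion $\mathbb{E}\left[(X^2-1)^2\right]=m_4(\mu)-2m_2(\mu)+1=m_4(\mu)-1$. Your additional checks of integrability and of nonnegativity of $m_4(\mu)-1$ are harmless extras.
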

\begin{proof}Let $B$ be a random variable  with Rademacher distribution $\mathbb{\mathrm{b}}$ and $X$ be a random variable  with distribution $\mu$. We have
$$W_1(\mu,\mathbb{\mathrm{b}})=W_1(X,B)\leq \mathbb{E}\left[\left|X^2-1\right|\right]\leq \sqrt{\mathbb{E}\left[\left(X^2-1\right)^2\right]}=\sqrt{\mathbb{E}[X^4]-1}.$$
\end{proof}
Using \eqref{eq:fourthcumulant}, we obtain the following Berry-Esseen bound.
\begin{corollary}[Berry-Esseen bound]Let $\mu\in \mathcal{M}(\mathbb{R})$ such that $m_1(\mu) = 0$, $m_2(\mu) = 1$,
and $m_4(\mu) < +\infty$.  We define $\mu_n$ to be the dilation of $\mu^{\uplus n}$ by $\frac{1}{\sqrt{n}}$, i.e. $\mu_n(B)=\mu^{\uplus n}(\{x:\frac{1}{\sqrt{n}}x\in B\})$. We have
$$W_1(\mu_n,\mathbb{\mathrm{b}})\leq \frac{1}{\sqrt{n}}\sqrt{m_4(\mu)-1}.$$
\end{corollary}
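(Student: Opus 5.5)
The plan is to apply the quantitative fourth moment theorem (the preceding Corollary) to $\mu_n$ itself, and then to compute the fourth moment of $\mu_n$ through Boolean cumulants.

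\textbf{Step 1: check the hypotheses for $\mu_n$.} Since $\mu$ has finite fourth moment, so does $\mu^{\uplus n}$. Its fourth moment is a polynomial in its first four Boolean cumulants, and these are $n$ times those of $\mu$, hence finite. The dilation by $1/\sqrt{n}$ preserves finiteness. By the identities \eqref{eq:fourthcumulant}, we have $m_1(\mu_n)=0$ and $m_2(\mu_n)=1$. So $\mu_n$ satisfies the hypotheses of the quantitative fourth moment theorem, which gives
$$W_1(\mu_n,\mathbb{\mathrm{b}})\leq \sqrt{m_4(\mu_n)-1}.$$

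\textbf{Step 2: compute $m_4(\mu_n)-1$.} The main point is the Boolean cumulant relation. Since $r_1(\mu_n)=0$, the moment–cumulant formula reduces to $m_4(\mu_n)=r_2(\mu_n)^2+r_4(\mu_n)$. As $r_2(\mu_n)=1$, this gives $m_4(\mu_n)-1=r_4(\mu_n)$.

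By additivity of Boolean cumulants under $\uplus$ and their homogeneity under dilation,
$$r_4(\mu_n)=n\cdot n^{-2}\,r_4(\mu)=\frac{1}{n}\,r_4(\mu).$$
Applying the same moment–cumulant formula to $\mu$ (where $r_1(\mu)=0$ and $r_2(\mu)=1$) gives $r_4(\mu)=m_4(\mu)-1$. This is exactly \eqref{eq:fourthcumulant}, so
$$m_4(\mu_n)-1=\frac{1}{n}\bigl(m_4(\mu)-1\bigr).$$

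\textbf{Step 3: conclude.} Substituting Step 2 into the bound of Step 1 yields
$$W_1(\mu_n,\mathbb{\mathrm{b}})\leq \frac{1}{\sqrt{n}}\sqrt{m_4(\mu)-1},$$
which is the claim.

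There is no real obstacle. The only point needing care is Step 1, namely that $\mu^{\uplus n}$ genuinely has finite fourth moment, so that the cumulant identities apply. This follows from the standard fact that Boolean convolution of measures with finite moments up to order four has finite moments up to order four, obtained through the $K$-transform expansion.
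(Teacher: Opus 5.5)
Your proposal is correct and follows the paper's argument: you apply the quantitative fourth moment theorem to $\mu_n$ and then use the Boolean cumulant computation \eqref{eq:fourthcumulant}, namely $m_4(\mu_n)-1=r_4(\mu_n)=\frac{1}{n}\bigl(m_4(\mu)-1\bigr)$. Your remark that $\mu^{\uplus n}$ has a finite fourth moment, justified via the $K$-transform, covers a point the paper takes for granted.
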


\subsection{Stein discrepancy}\label{sec:Stein_discrepancy}The notion of Stein kernel plays a central role in classical and free probability. We now introduce its Boolean analogue. 
In the classical setting, a Stein kernel~\cite{ledoux2015stein} with respect to the Gaussian distribution is a function $A\in L^2(\mu)$ such that
$$\int x\cdot f(x)d\mu(x)=\int A(x)\cdot f'(x)d\mu(x).$$
In other words, the score function is obtained by replacing $x$ by $\xi$ on the left side in the integration by parts (see Section~\ref{sec:Schwinger}), and the Stein kernel is obtained by replacing $1$ by $A(x)$ on the right side of the equality. Similarly, a free Stein kernel~\cite{fathi2017free} with respect to the semicircular distribution is any $A\in L^2(\mu\otimes \mu)$ such that the following deformation of the Schwinger-Dyson formula holds:
$$\int x\cdot  f(x)\,d\mu(x)=\iint  A(x,y)\cdot \partial f(x,y)\,d\mu(x)d\mu(y).$$
Following the same philosophy, it is natural to interpret the map $x\to x^2$ as a Boolean Stein kernel (with respect to $\mathbb{\mathrm{b}}$) of any centered measure $\mu$,. Indeed,  for any function which is differentiable at $0$, we have the deformation of the Boolean Schwinger-Dyson identity
$$\int x\cdot f(x)\,d\mu(x)=\int x^2\cdot D^\downarrow f (x)\,d\mu(x).$$
This leads to the definition of the Boolean Stein discrepancy, in parallel to the classical Stein discrepancy $\|T-1\|_{L^2(\mu)}$ (see also the free one).
\begin{definition}
Let $X$ be a real random variable with law $\mu\in\mathcal{M}(\R)$. The \textit{Boolean Stein discrepancy} $D^*(X|\mathbb{\mathrm{b}})$ of $X$ with respect to $\mathbb{\mathrm{b}}$ is defined as
\begin{equation*}
		D^*(X|\mathbb{\mathrm{b}}):=\left(\int (x^2-1)^2 \,d\mu(x)\right)^{1/2}\in [0,+\infty].
	\end{equation*}
Similarly, for any $\mu \in\mathcal{M}(\R)$, we define the \textit{Boolean Stein discrepancy} $D^*(\mu|\mathbb{\mathrm{b}})$ of $\mu$ with respect to $\mathbb{\mathrm{b}}$ as
\begin{equation*}
		D^*(\mu|\mathbb{\mathrm{b}}):=\left(\int (x^2-1)^2\,d\mu(x)\right)^{1/2}\in [0,+\infty].
	\end{equation*}
\end{definition}    

The Stein discrepancy has already shown up in the previous Section~\ref{sec:Steinmethod}, where it was shown that whenever $\mu$ is centered, we have
$$W_1(\mu,\mathbb{\mathrm{b}})\leq  D^*(\mu|\mathbb{\mathrm{b}}).$$
We can strengthen this result to the 2-Wasserstein distance as follows. It is the Boolean analogue of the classical and free WS inequalities in respectively \cite{ledoux2015stein,cebron2021quantitative}.
\begin{proposition}[WS inequality]\label{prop:WS}
    For any measure $\mu\in \mathcal{M}(\mathbb{R})$, we have
$$\left(\int (|x|-1)^2\,d\mu(x)\right)^{1/2}\leq  D^*(\mu|\mathbb{\mathrm{b}}).$$
In particular, if $\mu$ is symmetric,
$$W_2(\mu,\mathbb{\mathrm{b}})\leq  D^*(\mu|\mathbb{\mathrm{b}}).$$
\end{proposition}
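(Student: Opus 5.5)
The plan is to prove the first inequality pointwise and then identify its left-hand side with $W_2(\mu,\mathbb{\mathrm{b}})$ in the symmetric case, using the explicit optimal coupling already found in the proof of Theorem~\ref{thm:TTI}.

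\textbf{Pointwise bound.} For every $x\in\mathbb{R}$ we have
\[
x^2-1=(|x|-1)(|x|+1).
\]
Since $|x|+1\geq 1$, this gives
\[
(|x|-1)^2\leq (|x|-1)^2(|x|+1)^2=(x^2-1)^2 .
\]

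\textbf{Integration.} Integrating this inequality against $\mu$ yields
\[
\int(|x|-1)^2\,d\mu(x)\leq \int (x^2-1)^2\,d\mu(x)=D^*(\mu|\mathbb{\mathrm{b}})^2 .
\]
This holds in $[0,+\infty]$ with no moment assumptions on $\mu$, since both integrands are nonnegative. Taking square roots gives the first claim for any $\mu\in\mathcal{M}(\mathbb{R})$.

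\textbf{Symmetric case.} When $\mu$ is symmetric, the proof of Theorem~\ref{thm:TTI} shows that the optimal coupling sends the positive half of $\mu$ to $+1$ and the negative half to $-1$. This gives
\[
W_2(\mu,\mathbb{\mathrm{b}})^2=\int(|x|-1)^2\,d\mu(x).
\]
One subtlety is that the splitting needs $\mu_\pm$ to each have mass $\tfrac12$. If $\mu$ has an atom at $0$, we split that atom equally between $\mu_+$ and $\mu_-$. Its cost is $(0-1)^2=(|0|-1)^2$ whichever side it goes to, so the formula is unaffected. The second claim then follows directly from the first.

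The argument is elementary and I do not expect a real obstacle. The only point needing care is the identification of $W_2$ in the symmetric case, and that identification is taken from Theorem~\ref{thm:TTI}.
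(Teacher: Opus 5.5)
Your proposal is correct and follows essentially the same route as the paper: the pointwise bound $(|x|-1)^2\le(|x|-1)^2(|x|+1)^2=(x^2-1)^2$, integrated against $\mu$, and then in the symmetric case the identity $W_2(\mu,\mathbb{\mathrm{b}})^2=\int(|x|-1)^2\,d\mu(x)$ taken from the Talagrand inequality. Your remark on splitting an atom at $0$ equally between $\mu_+$ and $\mu_-$ is a small piece of care that the paper leaves implicit.
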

\begin{proof}
For all $x\in \mathbb{R}$, we have
$$(|x|-1)^2\leq (|x|-1)^2(|x|+1)^2=(|x|^2-1)^2.$$
Therefore,
$$\left(\int (|x|-1)^2\,d\mu(x)\right)^{1/2}\leq \left(\int (|x|^2-1)^2\,d\mu(x)\right)^{1/2}=D^*(\mu|\mathbb{\mathrm{b}}).$$
For $\mu$ symmetric, the equality $W_2(\mu,\mathbb{\mathrm{b}})=\left(\int (|x|-1)^2d\mu(x)\right)^{1/2}$
was established in previous sections.
\end{proof}
As a consequence, inequalities involving the Boolean Stein discrepancy can be turned into Berry-Esseen results using \eqref{eq:fourthcumulant}.
\begin{proposition}[Berry-Esseen bound]\label{prop:Berryesseen}Let $\mu\in \mathcal{M}^{sym}(\mathbb{R})$ such that $m_1(\mu) = 0$, $m_2(\mu) = 1$,
and $m_4(\mu) < +\infty$.  We define $\mu_n$ to be the dilation of $\mu^{\uplus n}$ by $\frac{1}{\sqrt{n}}$, i.e. $\mu_n(B)=\mu^{\uplus n}(\{x:\frac{1}{\sqrt{n}}x\in B\})$. We have
    $$W_2(\mu_n,\mathbb{\mathrm{b}})\leq D^*(\mu_n|\mathbb{\mathrm{b}})=\frac{1}{\sqrt{n}}\sqrt{m_4(\mu)-1}.$$
\end{proposition}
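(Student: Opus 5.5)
The bound combines the WS inequality (Proposition~\ref{prop:WS}) with an exact computation of the Boolean Stein discrepancy along the Boolean central limit theorem. The inequality $W_2(\mu_n,\mathbb{\mathrm{b}})\leq D^*(\mu_n|\mathbb{\mathrm{b}})$ is immediate once we know that $\mu_n$ is symmetric. Since $\mu$ is symmetric, $\mu^{\uplus n}$ is symmetric: all odd moments vanish, as recalled in the Notations, or equivalently $K_{\mu}$ satisfies the symmetry relation that is additive under $\uplus$. A dilation of a symmetric measure is again symmetric, so Proposition~\ref{prop:WS} applies to $\mu_n$.

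For the equality, expand the definition:
$$D^*(\mu_n|\mathbb{\mathrm{b}})^2=\int (x^2-1)^2\,d\mu_n(x)=m_4(\mu_n)-2m_2(\mu_n)+1.$$
By \eqref{eq:fourthcumulant}, $m_2(\mu_n)=1$ and $m_4(\mu_n)-1=\frac1n(m_4(\mu)-1)$. Substituting gives
$$D^*(\mu_n|\mathbb{\mathrm{b}})^2=m_4(\mu_n)-1=\frac{1}{n}\bigl(m_4(\mu)-1\bigr).$$
Taking square roots yields the claim; note $m_4(\mu)\geq m_2(\mu)^2=1$, so the root is real.

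The only point needing care is the moment bookkeeping. The cumulant identities \eqref{eq:fourthcumulant} require finiteness of the fourth moment of $\mu_n$, which does not depend on compact support. One can justify it via the Boolean cumulant relations: $r_i$ is additive and homogeneous, and $m_4(\mu^{\uplus n})$ is finite because $K_{\mu^{\uplus n}}=nK_\mu$ has the same asymptotic expansion order at infinity as $K_\mu$. If needed, I would cite the standard fact that existence of moments up to order $4$ is equivalent to an expansion of $K$ up to that order, which is preserved under $\uplus$ and dilation. This technical check is the main, though minor, obstacle; the rest is direct substitution.
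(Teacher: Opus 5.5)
Your proposal is correct and follows the paper's argument: apply the WS inequality (Proposition~\ref{prop:WS}) to the symmetric measure $\mu_n$, then compute $D^*(\mu_n|\mathbb{\mathrm{b}})^2=m_4(\mu_n)-2m_2(\mu_n)+1=\frac{1}{n}(m_4(\mu)-1)$ using \eqref{eq:fourthcumulant}. Your extra checks, that $\mu_n$ is symmetric and that $m_4(\mu_n)$ is finite, are points the paper takes for granted from its Notations section, and your justifications of them are sound.
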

\begin{proof}
It suffices to observe that $m_2(\mu_n)=1$, so we can compute
\begin{align*}
  D^*(\mu_n|\mathbb{\mathrm{b}})^2&= \int (|x|^2-1)^2d\mu_n(x)\\
   &=m_4(\mu_n)-2m_2(\mu_n)+1\\
   &=r_4(\mu_n)\\
   &=\frac{1}{n}r_4(\mu)\\
   &=\frac{1}{n}(m_4(\mu)-1),
\end{align*}
where we used \eqref{eq:fourthcumulant}.
\end{proof}
\subsection{Entropic CLT}\label{sec:EntropicCLT} To turn the convergence of the Stein discrepancy into entropic convergence in the Boolean central limit theorem, we establish a Boolean HSI inequality. 
This inequality is the exact analogue, at the level of its functional form, of the HSI inequalities in the classical and free settings~\cite{ledoux2015stein,fathi2017free}.
\begin{theorem}[HSI inequality]\label{thm:HSI}For any measure $\mu\in \mathcal{M}^{sym}(\mathbb{R})$, we have
$$\Gamma(\mu|\mathbb{\mathrm{b}})\leq \frac{1}{2}D^*(\mu|\mathbb{\mathrm{b}})^2\cdot \log\left(1+\frac{\Psi^*(\mu|\mathbb{\mathrm{b}})}{D^*(\mu|\mathbb{\mathrm{b}})^2}\right).$$
\end{theorem}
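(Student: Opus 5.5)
The plan is to avoid the Ornstein--Uhlenbeck flow and reduce the statement to a pointwise inequality followed by a Jensen argument, as in the proof of the HWI inequality (Theorem~\ref{thm:HWI_sym}). Write $\mu^{(2)}$ for the push-forward of $\mu$ by $x\mapsto x^2$ and $u=x^2>0$. The three quantities involved are integrals against $\mu^{(2)}$:
\[
2\Gamma(\mu|\mathbb{\mathrm{b}})=\int (u-1-\log u)\,d\mu^{(2)}(u),\quad
D^*(\mu|\mathbb{\mathrm{b}})^2=\int (u-1)^2\,d\mu^{(2)}(u),\quad
\Psi^*(\mu|\mathbb{\mathrm{b}})=\int \frac{(u-1)^2}{u}\,d\mu^{(2)}(u).
\]
The last identity holds because $u+u^{-1}-2=(u-1)^2/u$. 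Set $s(u)=(u-1)^2$ and $i(u)=(u-1)^2/u$.

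\textbf{Step 1 (concavity).} The function $\Phi(s,i):=s\log(1+i/s)$ on $(0,\infty)\times[0,\infty)$ is the perspective of the concave function $i\mapsto \log(1+i)$. It is therefore jointly concave and positively $1$-homogeneous. I extend it by $\Phi(0,i)=0$, which is the limit as $s\to 0^+$, so that it is upper semicontinuous. Jensen's inequality then gives
\[
\int \Phi\bigl(s(u),i(u)\bigr)\,d\mu^{(2)}(u)\le \Phi\Bigl(\int s\,d\mu^{(2)},\int i\,d\mu^{(2)}\Bigr)=D^*(\mu|\mathbb{\mathrm{b}})^2\log\Bigl(1+\frac{\Psi^*(\mu|\mathbb{\mathrm{b}})}{D^*(\mu|\mathbb{\mathrm{b}})^2}\Bigr).
\]
Edge cases are handled separately. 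If $D^*$ or $\Psi^*$ is infinite, the right-hand side is $+\infty$ and there is nothing to prove. If $D^*(\mu|\mathbb{\mathrm{b}})=0$, then $\mu=\mathbb{\mathrm{b}}$ and both sides vanish. For the infinite cases I would first truncate $\mu^{(2)}$ to $[\epsilon,\epsilon^{-1}]$ and pass to the limit by monotone convergence, as in the proof of Theorem~\ref{thm:LSI}.

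\textbf{Step 2 (pointwise inequality).} It remains to show, for all $u>0$,
\[
u-1-\log u\;\le\;\Phi\bigl(s(u),i(u)\bigr)=(u-1)^2\log\Bigl(1+\frac1u\Bigr).
\]
Integrating this against $\mu^{(2)}$ and combining with Step~1 yields $2\Gamma(\mu|\mathbb{\mathrm{b}})\le D^{*2}\log(1+\Psi^*/D^{*2})$, which is the theorem. Both sides vanish to second order at $u=1$: the left side is $\sim\frac12(u-1)^2$ and the right side is $\sim \log 2\,(u-1)^2$. Both behave like $-\log u$ as $u\to0^+$ and like $u$ as $u\to\infty$. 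So the inequality is plausible but tight at both ends, and I expect this elementary step to be the main obstacle.

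I would split into regions. For $u\ge 1$, use $\log(1+1/u)\ge \frac{2}{2u+1}$. It then suffices to show $\frac{2(u-1)^2}{2u+1}\ge u-1-\log u$. Both sides vanish at $u=1$, so this can be checked by comparing derivatives, or by using $\log u\ge \frac{2(u-1)}{u+1}$ to reduce it to a polynomial inequality.

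For $0<u\le1$, substitute $u=1/v$ with $v\ge1$. The inequality becomes
\[
\frac1v-1+\log v\le \frac{(v-1)^2}{v^2}\log(1+v).
\]
As $v\to\infty$, the difference between the right and left sides is $\log(1+1/v)+O(\log v/v)$, so it tends to $0$ and the margin is thin there. I would therefore study $\psi(v)=(v-1)^2\log(1+v)-v^2\log v-v+v^2$. It satisfies $\psi(1)=0$, and I would show $\psi'\ge0$ by differentiating once or twice and using standard bounds such as $\log(1+v)\ge\log v+\frac{1}{v+1/2}$. If that direct route is too lossy near $v=1$, a fallback is to handle a neighbourhood of $u=1$ by a Taylor expansion with the explicit constants $\tfrac12<\log 2$, and to handle the tails with the asymptotic comparisons above.
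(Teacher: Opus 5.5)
Your proposal is correct and is essentially the paper's proof. The paper integrates the pointwise bound $u-1-\log u\le \delta\,(u+u^{-1}-2)+(\delta-1-\log\delta)(u-1)^2$ and then chooses $\delta=(D^*)^2/((D^*)^2+\Psi^*)$; this is your Jensen step carried out through the affine majorants $s\log(1+i/s)=\inf_{\delta\in(0,1]}\{\delta i+(\delta-1-\log\delta)s\}$, and minimizing that bound over $\delta$ for fixed $u$ yields exactly your inequality $u-1-\log u\le (u-1)^2\log(1+1/u)$.

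The paper proves that inequality without any case split: $H(u)=(u-1)^2\log(1+1/u)-(u-1-\log u)$ satisfies $H'(u)=2(u-1)\bigl(\log(1+1/u)-\tfrac{1}{u+1}\bigr)$ with the bracket positive, so $H\ge H(1)=0$. Two small corrections to your remarks: $H(u)\to 1$ rather than $0$ as $u\to0^+$, so the margin is not thin there. Also, when $D^*=\infty>\Psi^*$ the right-hand side is of the form $\infty\cdot 0$, with limiting value $\tfrac12\Psi^*$. That case follows from $\delta=1$ rather than being trivially $+\infty$.
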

Using $\log(1+x)\leq x$, we recover the inequality $2\Gamma(\mu|\mathbb{\mathrm{b}})\leq \Psi^*(\mu|b)$ which is the log-Sobolev inequality between $\Gamma$ and $\Psi^*$. 

\begin{proof}
The proof relies on the pointwise inequality
\[
u - \log u - 1 \leq \delta\!\left(u+\frac1u-2\right) + (-\log\delta+\delta-1)(u-1)^2
\]
for all $u>0$ and $\delta\in(0,1]$, and then integrate and optimize over $\delta$.

Let
\[
F(u,\delta)
= \log u - u + 1
+ \delta\!\left(u+\frac1u-2\right)
+ (-\log\delta+\delta-1)(u-1)^2.
\]
Note that $F\geq 0$ is equivalent to the desired inequality.
Fix $u>0$ and minimize with respect to $\delta\in(0,1]$. Differentiating gives
\[
\partial_\delta F(u,\delta)
= \left(u+\frac1u-2\right) + \left(-\frac1\delta+1\right)(u-1)^2.
\]
Using $u+1/u-2=(u-1)^2/u$, this becomes
\[
\partial_\delta F(u,\delta)
= (u-1)^2 \left(\frac1u - \frac1\delta +1\right).
\]
For $u\neq 1$ the critical point is $\delta_*(u)=u/(u+1)\in(0,1)$. The second derivative
\[
\partial_\delta^2 F(u,\delta) = (u-1)^2/\delta^2 >0
\]
shows it is a minimum. For $u=1$, $F(1,\delta)=0$ for all $\delta$. Substituting $\delta=u/(u+1)$ yields
\[
\min_{\delta\in(0,1]} F(u,\delta)
= H(u) := \log u - u + 1 + (u-1)^2 \log\!\frac{u+1}{u}.
\]

It remains to show $H(u)\ge 0$ for all $u>0$. Clearly $H(1)=0$. A direct computation gives
\[
H'(u) = 2(u-1)\left( \log\frac{u+1}{u} - \frac1{u+1} \right).
\]
The function $\phi(u)=\log((u+1)/u) - 1/(u+1)$ satisfies $\phi(u)>0$ for all $u>0$ (since $\phi(1)>0$, $\phi(u)\to 0^+$ as $u\to\infty$, and $\phi'(u)<0$). Thus $H'(u)<0$ on $(0,1)$ and $H'(u)>0$ on $(1,\infty)$, so $H$ attains its global minimum at $u=1$, where $H(1)=0$. Hence $H(u)\ge 0$.

Consequently $F(u,\delta)\ge 0$ for all $u>0$ and $\delta\in(0,1]$, which rearranges to the desired pointwise inequality. Integrating with respect to $\mu$ gives
\[
2\Gamma(\mu|\mathbb{\mathrm{b}})
\leq \delta \Psi^*(\mu|\mathbb{\mathrm{b}}) + (-\log\delta + \delta -1) D^*(\mu|\mathbb{\mathrm{b}})^2.
\]

The right-hand side, seen as a function of $\delta$, is minimized at
\[
\delta = \frac{D^*(\mu|\mathbb{\mathrm{b}})^2}{D^*(\mu|\mathbb{\mathrm{b}})^2 + \Psi^*(\mu|\mathbb{\mathrm{b}})}.
\]
Substituting this optimal value yields exactly
\[
2\Gamma(\mu|\mathbb{\mathrm{b}})
\leq D^*(\mu|\mathbb{\mathrm{b}})^2 \cdot \log\left(1+\frac{\Psi^*(\mu|\mathbb{\mathrm{b}})}{D^*(\mu|\mathbb{\mathrm{b}})^2}\right),
\]
which concludes the proof.
\end{proof}
In particular, the Boolean central limit theorem holds at the entropic level, from the convergence of the Stein discrepancy via the HSI inequality.
\begin{corollary}
      Let $\mu\in \mathcal{M}^{sym}(\mathbb{R})$ such that $m_1(\mu) = 0$, $m_2(\mu) = 1$,
and $m_4(\mu) < +\infty$.  We define $\mu_n$ to be the dilation of $\mu^{\uplus n}$ by $\frac{1}{\sqrt{n}}$, i.e. $\mu_n(B)=\mu^{\uplus n}(\{x:\frac{1}{\sqrt{n}}x\in B\})$. We have, as $n\to \infty$,
    $$\Gamma(\mu_n|\mathbb{\mathrm{b}})=O\left(\frac{\log(n)}{n}\right).$$
\end{corollary}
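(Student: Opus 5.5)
The plan is to combine the HSI inequality (Theorem~\ref{thm:HSI}) with two exact computations along the Boolean CLT. The Stein discrepancy decays like $1/n$ (Proposition~\ref{prop:Berryesseen}), while the Non-microstates relative Fisher information stays constant (Theorem~\ref{th:constancy}). Feeding these into the function $D^2\log(1+\Psi^*/D^2)$ produces the rate $\log(n)/n$.

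\emph{Step 1 (the Stein discrepancy).} By Proposition~\ref{prop:Berryesseen}, which relies on \eqref{eq:fourthcumulant},
\[
D^*(\mu_n|\mathbb{\mathrm{b}})^2=\frac{a}{n},\qquad a:=m_4(\mu)-1\ge 0 .
\]
If $a=0$, then $\int (x^2-1)^2\,d\mu=0$, so $\mu=\mathbb{\mathrm{b}}$. In that case $\mu_n=\mathbb{\mathrm{b}}$ by the divisibility of $\mathbb{\mathrm{b}}$ under $\uplus$, and $\Gamma(\mu_n|\mathbb{\mathrm{b}})=0$. We may therefore assume $a>0$.

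\emph{Step 2 (the Fisher information is constant).} Theorem~\ref{th:constancy} gives $\Psi^*(\mu_n)=\Psi^*(\mu)=m_{-2}(\mu)$. We also have $m_2(\mu_n)=1$. Hence
\[
\Psi^*(\mu_n|\mathbb{\mathrm{b}})=m_{-2}(\mu)+1-2=m_{-2}(\mu)-1=:C .
\]
This is where the hypothesis $m_{-2}(\mu)<+\infty$ (stated in the introduction for this result) enters: it makes $C$ finite. If $m_{-2}(\mu)=+\infty$, the HSI bound is vacuous, and this route would not give the rate.

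\emph{Step 3 (apply HSI).} Theorem~\ref{thm:HSI} yields
\[
\Gamma(\mu_n|\mathbb{\mathrm{b}})\le \frac{a}{2n}\log\Bigl(1+\frac{Cn}{a}\Bigr).
\]
For $n\ge 2$ the right-hand side is at most $\frac{a}{2n}\bigl(\log n+\log(1+C/a)\bigr)$, hence it is $O(\log(n)/n)$.

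The only delicate point is bookkeeping rather than analysis. One must check that the relative quantities are well defined, with $m_2(\mu_n)=1$ so that the quadratic terms cancel. One must also check that the symmetry of $\mu$ is preserved under $\uplus$ and dilation, which is needed for both Theorem~\ref{th:constancy} and Theorem~\ref{thm:HSI}.
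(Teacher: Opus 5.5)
Your proposal is correct and follows the paper's own argument: the HSI inequality (Theorem~\ref{thm:HSI}) combined with the constancy of $\Psi^*(\mu_n|\mathbb{\mathrm{b}})$ (Theorem~\ref{th:constancy}) and $D^*(\mu_n|\mathbb{\mathrm{b}})^2=(m_4(\mu)-1)/n$ (Proposition~\ref{prop:Berryesseen}); your explicit bound $\frac{a}{2n}\log(1+Cn/a)$ and your separate treatment of $\mu=\mathbb{\mathrm{b}}$ are slightly more careful than the paper's one-line proof. Your remark that $m_{-2}(\mu)<+\infty$ must be assumed is well taken and is more than a technicality: the paper uses it tacitly, and without it the statement fails, since if $\mu$ has an atom of mass $p\in(0,1)$ at $0$, then $G_{\mu_n^{(2)}}(z)=1/(z-K_{\mu^{(2)}}(nz))$ shows that $\mu_n$ has an atom of mass $p/(p+n(1-p))$ at $0$, so $\Gamma(\mu_n|\mathbb{\mathrm{b}})=+\infty$ for every $n$.
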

\begin{proof}By Theorem~\ref{th:constancy}, the quantity $\Psi^*(\mu_n|\mathbb{\mathrm{b}})$ is constant and Proposition~\ref{prop:Berryesseen} gives the rate $D^*(\mu_n|\mathbb{\mathrm{b}})^2=O(1/n)$. Thus
$$D^*(\mu_n|\mathbb{\mathrm{b}})^2\cdot \log\left(1+\frac{\Psi^*(\mu_n|\mathbb{\mathrm{b}})}{D^*(\mu_n|\mathbb{\mathrm{b}})^2}\right)=O\left(\frac{\log(n)}{n}\right),$$
and the HSI inequality allows to conclude.
\end{proof}
We can also derive a quantitative bound on the Boolean Fisher information along the Boolean central limit theorem. We do not expect the rate below to be optimal.

\begin{proposition}
Let $\mu\in \mathcal{M}(\mathbb{R})$ such that $m_1(\mu)=0$, $m_2(\mu)=1$ and $m_4(\mu)<+\infty$.  
Let $\mu_n$ be the dilation of $\mu^{\uplus n}$ by $1/\sqrt{n}$, i, i.e. $\mu_n(B)=\mu^{\uplus n}(\{x:\frac{1}{\sqrt{n}}x\in B\})$.
Then
\[
\Psi(\mu_n|\mathbb{\mathrm b})
\le \Psi^*(\mu_n)\,D^*(\mu_n|\mathbb{\mathrm b})
=\frac{1}{\sqrt n}\,m_{-2}(\mu)\sqrt{m_4(\mu)-1}.
\]
\end{proposition}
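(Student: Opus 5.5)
The plan is to reduce everything to a one–dimensional inequality for the pushed-forward law $\nu:=\mu_n^{(2)}$ on $\R_+$, and then to apply Cauchy--Schwarz twice. Two normalizations drive the argument. Since $m_2(\mu_n)=1$, we have $\int x\,d\nu(x)=1$ and
\[
\Psi(\mu_n|\mathbb{\mathrm b})=\iint \frac{\log x-\log y}{x-y}\,d\nu(x)d\nu(y)-1 .
\]
Moreover $\Psi^*(\mu_n)=\int x^{-1}\,d\nu=:s^2$ and $D^*(\mu_n|\mathbb{\mathrm b})^2=\int (x-1)^2\,d\nu=:D^2$. The stated equality $\Psi^*(\mu_n)D^*(\mu_n|\mathbb{\mathrm b})=n^{-1/2}m_{-2}(\mu)\sqrt{m_4(\mu)-1}$ is then immediate from Theorem~\ref{th:constancy}, which gives $\Psi^*(\mu_n)=\Psi^*(\mu)=m_{-2}(\mu)$, and from Proposition~\ref{prop:Berryesseen}. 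So the real content is the general inequality $\Psi(\mu|\mathbb{\mathrm b})\le \Psi^*(\mu)D^*(\mu|\mathbb{\mathrm b})$ for symmetric $\mu$ with $m_2(\mu)=1$. If $s=\infty$ the inequality is trivial, so we assume $s<\infty$.

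\textbf{Step 1: bound the kernel.} The logarithmic mean $L(x,y)=(x-y)/(\log x-\log y)$ dominates the geometric mean, so $1/L(x,y)\le (xy)^{-1/2}$. This gives
\[
\Psi(\mu)\le a^2,\qquad a:=\int x^{-1/2}\,d\nu .
\]
Hence $\Psi(\mu|\mathbb{\mathrm b})\le a^2-1=(a-1)(a+1)$. If $a\le 1$ the claim is trivial, so we assume $a>1$.

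\textbf{Step 2: Cauchy--Schwarz on $a-1$.} Using $\int x\,d\nu=1$, write
\[
a-1=\int \frac{1-\sqrt x}{\sqrt x}\,d\nu=\int\frac{1-x}{\sqrt x\,(1+\sqrt x)}\,d\nu .
\]
Bounding $(1+\sqrt x)^{-1}\le 1$ and applying Cauchy--Schwarz yields $a-1\le s D$.

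\textbf{Step 3: close the estimate.} Jensen gives $1\le a\le s$, so the factor $a+1$ must be absorbed by one more factor of $s$. The crude bound $a+1\le 2s$ loses a factor $2$, so Steps~2 and~3 should be merged rather than done separately. I would write
\[
a^2-1=\iint \frac{1-\sqrt{xy}}{\sqrt{xy}}\,d\nu\,d\nu
\]
and restrict to the region $\{xy<1\}$, where the integrand is positive. On that region I would use
\[
1-xy=\tfrac12\bigl[(1-x)(1+y)+(1-y)(1+x)\bigr]
\]
and keep the damping factor $(1+\sqrt{xy})^{-1}$ instead of discarding it. The aim is to show that $\frac{1+y}{1+\sqrt{xy}}$ is controlled well enough on $\{xy<1\}$ that one Cauchy--Schwarz in $x$ produces $D$ and the remaining $y$-integral is at most $s^2$. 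This yields $a^2-1\le s^2 D$, which is exactly $\Psi^*(\mu)\,D^*(\mu|\mathbb{\mathrm b})$.

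The main obstacle is this last bookkeeping, that is, getting constant $1$ rather than $2$. If Step~3 resists, I would fall back on the sharper integral representation $\Psi(\mu)=\int_0^1\iint (tx+(1-t)y)^{-1}\,d\nu\,d\nu\,dt$ used in the proof of \eqref{ineq:fisher bdd}. There I would subtract $1=\int_0^1\iint (tx+(1-t)y)\cdot(tx+(1-t)y)^{-1}\,d\nu\,d\nu\,dt$ and apply Cauchy--Schwarz to $(1-tx-(1-t)y)\,(tx+(1-t)y)^{-1}$, controlling the second factor by convexity of $u\mapsto u^{-1}$.
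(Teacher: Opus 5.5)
Your reduction and the closing identity $\Psi^*(\mu_n)D^*(\mu_n|\mathrm{b})=n^{-1/2}m_{-2}(\mu)\sqrt{m_4(\mu)-1}$ are fine. The inequality itself is not proved, however. Step~3, which you rightly call the whole content, is only an aim, and the bookkeeping you describe cannot close.

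Part of the reason is that Step~1 uses up all the available room. Take $\nu_p=p\delta_p+(1-p)\delta_{1+p}$, which has mean $1$. Then $D=\sqrt{p(1-p)}$, $s^2=2/(1+p)$, and
\[
\frac{a^2-1}{s^2D}=\sqrt{1-p}\,\bigl(\sqrt{1+p}-\sqrt p\bigr)\longrightarrow 1\qquad(p\to 0),
\]
whereas $\Psi(\mu_p|\mathrm{b})=\Psi(\mu_p)-1\sim 2p\log(1/p)$ is negligible compared with $s^2D\sim 2\sqrt p$. So once you replace the logarithmic mean by the geometric mean, every later step must be lossless to leading order. Restricting to $\{xy<1\}$ and then applying a Cauchy--Schwarz that cannot see the sign of $1-x$ is not lossless.

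Concretely, set $h(x,y)=\frac{(1+y)^2}{xy(1+\sqrt{xy})^2}\mathbf{1}_{\{xy<1\}}$. Your plan needs $\int\bigl(\int h(x,y)\,d\nu(x)\bigr)^{1/2}d\nu(y)\le s^2$. For $\nu=\frac12\delta_{2/5}+\frac12\delta_{8/5}$ the left side is about $1.587$, while $s^2=25/16=1.5625$. The sharper ordering (integrate in $y$ first, then Cauchy--Schwarz in $x$) also fails: for $\nu=\frac12\delta_{3/10}+\frac12\delta_{17/10}$ it gives about $1.994$ against $s^2\approx 1.961$.

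Your fallback is the route the paper actually takes, but the tool you name for the second factor would not give the stated constant. Convexity of $u\mapsto u^{-1}$ only yields
\[
\int_0^1\!\iint (tx+(1-t)y)^{-2}\,d\nu\,d\nu\,dt\le\tfrac13\bigl(2m_{-4}(\mu)+m_{-2}(\mu)^2\bigr).
\]
This exceeds $\Psi^*(\mu)^2=m_{-2}(\mu)^2$ unless $\nu$ is a point mass, and it also requires $m_{-4}(\mu)<\infty$.

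The missing ingredient is the exact identity $\int_0^1 (tx+(1-t)y)^{-2}\,dt=\frac{1}{xy}$. It makes that factor equal to $\bigl(\int x^{-1}\,d\nu\bigr)^2=\Psi^*(\mu)^2$. For the other factor, $\int x\,d\nu=1$ kills the cross term, so
\[
\int_0^1\!\iint (1-tx-(1-t)y)^2\,d\nu\,d\nu\,dt=\tfrac23 D^2\le D^2 .
\]
Together these give $|\Psi(\mu)-1|\le \Psi^*(\mu)\,D^*(\mu|\mathrm{b})$ in one line, and Steps~1--3 become unnecessary.
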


\begin{proof}
We first prove the general bound
\[
|\Psi(\mu)-1|
\le \Psi^*(\mu)\,D^*(\mu|\mathbb{\mathrm b}).
\]
Using~\eqref{eq:fisher_0^1}, we write
\[
\Psi(\mu)
=\iiint \frac{1}{t x^2+(1-t)y^2}\,dt\,d\mu(x)d\mu(y).
\]
Hence
\[
|\Psi(\mu)-1|
=\left|
\iiint 
\frac{1-(t x^2+(1-t)y^2)}{t x^2+(1-t)y^2}
\,dt\,d\mu(x)d\mu(y)
\right|.
\]

Applying Cauchy--Schwarz with respect to the product measure
$dt\,d\mu(x)d\mu(y)$ gives
\[
|\Psi(\mu)-1|
\le
\left(
\iiint \frac{1}{(t x^2+(1-t)y^2)^2}
\right)^{1/2}
\left(
\iiint (1-(t x^2+(1-t)y^2))^2
\right)^{1/2}.
\]

For the second factor,
\[
\iiint(1-W_t)^2\,dt\,d\mu\,d\mu = \int_0^1\mathrm{Var}(W_t)\,dt,
\]
where $W_t=tX^2+(1-t)Y^2$. Since $\mathrm{Var}(W_t)=[t^2+(1-t)^2]D^*(\mu|\mathrm{b})^2$ and $t^2+(1-t)^2\leq 1$, we obtain
\[
\int_0^1\mathrm{Var}(W_t)\,dt \leq D^*(\mu|\mathrm{b})^2,
\]
so the second factor is at most $D^*(\mu|\mathrm{b})$.

The first factor equals $\Psi^*(\mu)$, because
\[
\iiint \frac{1}{(t x^2+(1-t)y^2)^2}
=
\iint \frac{1}{x^2-y^2}\!\left(\frac{-1}{x^2}-\frac{-1}{y^2}\right)
d\mu(x)d\mu(y)
=\Psi^*(\mu)^2.
\]
Thus
\[
|\Psi(\mu)-1|
\le \Psi^*(\mu)\,D^*(\mu|\mathbb{\mathrm b}).
\]

Since
\[
\Psi(\mu|\mathbb{\mathrm b})
=\Psi(\mu)+m_2(\mu)-2,
\]
we obtain
\[
\Psi(\mu|\mathbb{\mathrm b})
\le \Psi^*(\mu)\,D^*(\mu|\mathbb{\mathrm b})
+|m_2(\mu)-1|.
\]
Applying this to $\mu_n$, we have $m_2(\mu_n)=1$, while
$\Psi^*(\mu_n)=\Psi^*(\mu)$ by Theorem~\ref{th:constancy}
and
$D^*(\mu_n|\mathbb{\mathrm b})=\frac{1}{\sqrt n}\sqrt{m_4(\mu)-1}$
by Proposition~\ref{prop:Berryesseen}.
This yields the claimed bound.
\end{proof}

Beyond its role in the entropic central limit theorem, the HSI inequality above fits into a broader family of interpolation inequalities relating transport, entropy and Fisher information. 
In the classical and free settings, the HSI inequality implies a WSH inequality~\cite{ledoux2015stein,cebron2021quantitative}, which refines the Talagrand inequality. 
We now establish the corresponding Boolean WSH inequality, thereby completing the parallel between the classical, free and Boolean frameworks.
\begin{theorem}[WSH inequality]\label{prop:WSH}
Let $\mu\in\mathcal{M}^{sym}(\R)$ with $m_2(\mu)<+\infty$.
Then
\begin{equation}\label{eq:WSH}
W_2(\mu,\mathbb{\mathrm{b}})
\le
D^*(\mu|\mathbb{\mathrm{b}})\,
\arccos\!\left(
\exp\!\left\{-\frac{\Gamma(\mu|\mathbb{\mathrm{b}})}{D^*(\mu|\mathbb{\mathrm{b}})^2}\right\}
\right).
\end{equation}
\end{theorem}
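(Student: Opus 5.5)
By the WS inequality (Proposition~\ref{prop:WS}) we have $W_2(\mu,\mathbb{\mathrm{b}})\le D^*(\mu|\mathbb{\mathrm{b}})$. So $s:=W_2/D^*\in[0,1]\subset[0,\pi/2)$, and $\cos$ is decreasing there. Hence \eqref{eq:WSH} is equivalent to
\[
D^*(\mu|\mathbb{\mathrm{b}})^2\, h\!\left(\frac{W_2(\mu,\mathbb{\mathrm{b}})^2}{D^*(\mu|\mathbb{\mathrm{b}})^2}\right)\le \Gamma(\mu|\mathbb{\mathrm{b}}),\qquad h(t):=-\log\cos\sqrt{t},\quad t\in[0,\pi^2/4).
\]
This is a strengthening of Talagrand's inequality (Theorem~\ref{thm:TTI}), since $h(t)\ge t/2$. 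The degenerate cases are handled first. If $D^*=+\infty$ or $\Gamma(\mu|\mathbb{\mathrm{b}})=+\infty$ the statement is trivial. If $D^*=0$, then $\mu=\mathbb{\mathrm{b}}$ and $W_2=0$, so the inequality holds with the natural convention.

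The plan is to reduce everything to integrals of the function $r=|x|$, exactly as in the proofs of Theorem~\ref{thm:HWI_sym} and Proposition~\ref{prop:WS}. Set $a(r)=(r-1)^2$ and $b(r)=(r^2-1)^2$. Since $\mu$ is symmetric,
\[
W_2^2=\int a(|x|)\,d\mu,\qquad D^{*2}=\int b(|x|)\,d\mu,\qquad \Gamma(\mu|\mathbb{\mathrm{b}})=\int V(|x|)\,d\mu,
\]
where $V(r)=\tfrac12r^2-\log r-\tfrac12$. The key structural fact is that $h$ is convex on $[0,\pi^2/4)$. Indeed, $-\log\cos y=\sum_k c_k y^{2k}$ with all $c_k\ge 0$, which follows since its derivative $\tan y$ has nonnegative Taylor coefficients. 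Thus $h(t)=\sum_k c_k t^k$ is a power series with nonnegative coefficients, hence convex. Consequently the perspective $g(a,b)=b\,h(a/b)$ is jointly convex and positively homogeneous on $\{b>0,\ a/b<\pi^2/4\}$. Jensen's inequality for this perspective then gives
\[
D^{*2}\,h(W_2^2/D^{*2})=g\Bigl(\textstyle\int a\,d\mu,\int b\,d\mu\Bigr)\le \int g\bigl(a(|x|),b(|x|)\bigr)\,d\mu(x).
\]
Here $a/b=1/(r+1)^2\le 1$ stays in the domain. The point $r=1$, where $a=b=0$, contributes $0$ by homogeneity.

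It then remains to prove the one-variable inequality
\[
(r^2-1)^2\Bigl(-\log\cos\tfrac{1}{r+1}\Bigr)\le \tfrac12 r^2-\log r-\tfrac12,\qquad r>0,
\]
and integrate it against $\mu$. I expect this to be the main obstacle. Sanity checks suggest it holds with room to spare:
\begin{itemize}
\item at $r\to0$ the left side tends to $-\log\cos1\approx 0.62$ while the right side blows up;
\item near $r=1$ the left side is $\approx\tfrac12(r-1)^2$ and the right side is $\approx(r-1)^2$;
\item as $r\to\infty$ the left side is $\tfrac12(r-1)^2+O(1)$ while the right side is $\tfrac12r^2-\log r$, so the gap grows like $r-\log r$.
\end{itemize}

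To prove it rigorously, I would first use an upper bound for $-\log\cos u$ on $u\in(0,1]$, for instance $-\log\cos u\le \tfrac{u^2}{2}+\kappa u^4$ with an explicit constant $\kappa$ (convexity of the series allows $\kappa=(-\log\cos 1-\tfrac12)$ since $u\le1$). This reduces the claim to
\[
\tfrac12(r-1)^2+\kappa\,\frac{(r-1)^2}{(r+1)^2}\le \tfrac12r^2-\log r-\tfrac12,
\quad\text{i.e.}\quad
\kappa\,\frac{(r-1)^2}{(r+1)^2}\le r-1-\log r .
\]
This elementary inequality can be checked by splitting into $r\le1$ and $r\ge1$ and comparing derivatives, in the same manner as the function $H$ in the proof of Theorem~\ref{thm:HSI}.
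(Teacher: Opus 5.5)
Your proof is correct, and it takes a genuinely different route from the paper's.

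The paper follows the Ledoux--Nourdin--Peccati template. It runs the flow $X_t^2=e^{-2t}X^2+(1-e^{-2t})$ and bounds $-\frac{d}{dt}W_2(\mu_t,\mathrm{b})$ by $\sqrt{\Psi^*(\mu_t|\mathrm{b})}$ via Cauchy--Schwarz. It then checks that $-\frac{d}{dt}\Gamma(\mu_t|\mathrm{b})=\Psi^*(\mu_t|\mathrm{b})$. Finally it inserts the HSI inequality (Theorem~\ref{thm:HSI}) at every time $t$, together with $D^*(\mu_t|\mathrm{b})\le D^*(\mu|\mathrm{b})$, and integrates over $t\in[0,\infty)$.

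You bypass the flow, $\Psi^*$ and HSI altogether. You rewrite \eqref{eq:WSH} as $D^{*2}\,h(W_2^2/D^{*2})\le\Gamma(\mu|\mathrm{b})$ with the convex function $h(t)=-\log\cos\sqrt t$. Jensen's inequality for the perspective $b\,h(a/b)$ then reduces everything to the scalar inequality $(r^2-1)^2\bigl(-\log\cos\frac{1}{r+1}\bigr)\le \frac12 r^2-\log r-\frac12$.

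The paper's route exhibits WSH as a consequence of HSI, exactly paralleling the classical and free theories. Yours is self-contained and needs no differentiation under the integral along a flow. It also avoids justifying the limits $t\to0$ and $t\to\infty$. And it shows that WSH is really a pointwise statement in $|x|$ combined with a convexity argument.

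The step you left open does close easily:
\begin{itemize}
\item Differentiating on each side of $r=1$ shows $r-1-\log r\ge (r-1)^2/(2\max(r,1))$.
\item Since $2\kappa<1$ and $\max(r,1)\le(r+1)^2$ for $r>0$, you have $2\kappa\max(r,1)\le(r+1)^2$.
\item Together these give $\kappa\frac{(r-1)^2}{(r+1)^2}\le r-1-\log r$.
\end{itemize}

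One small correction: the case $D^*(\mu|\mathrm{b})=+\infty$ is not trivial as you state. The right-hand side of \eqref{eq:WSH} is then of the form $\infty\cdot\arccos(1)=\infty\cdot 0$. Under the natural limiting convention, $\lim_{D\to\infty}D\arccos(e^{-\Gamma/D^2})=\sqrt{2\Gamma(\mu|\mathrm{b})}$, and the claim is then exactly Theorem~\ref{thm:TTI}. The paper's proof tacitly assumes $D^*<\infty$ as well.
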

Observe that for $x\ge 0$ one has
$\arccos(e^{-x}) \le \sqrt{2x}.$
Applying this with 
$x=\Gamma(\mu|\mathbb{\mathrm b})/D^*(\mu|\mathbb{\mathrm b})^2$ 
in~\eqref{eq:WSH}, we obtain the Boolean Talagrand inequality
$
W_2(\mu,\mathbb{\mathrm b})^2
\le
2\,\Gamma(\mu|\mathbb{\mathrm b}).$

\begin{proof}
Let $X$ be a random variable whose law is $\mu$ and, for $t\ge 0$, let $X_t$ be a symmetric random variable such that
\[
X_t^2 = e^{-2t}X^2+(1-e^{-2t}).
\]
We warn the reader that the semigroup $(X_t)_t$ is not the one given by the Boolean Ornstein-Uhlenbeck semi-group as considered in the previous sections.
However,  setting $Y_t=X_t^2$, we have $Y_t\to 1$ as $t\to+\infty$, hence
$X_t\to B$ in distribution.

Since $X_t$ is symmetric,
\[
W_2(\mu_t,\mathbb{\mathrm{b}})^2
=\E\big[(|X_t|-1)^2\big].
\]
Differentiating with respect to $t$ gives
\[
\frac{d}{dt}W_2(\mu_t,\mathbb{\mathrm{b}})^2
=
2\,\E\!\left[(|X_t|-1)\frac{d}{dt}|X_t|\right].
\]
Because $|x|=\sqrt{x^2}$ and $X_t^2=e^{-2t}X^2+1-e^{-2t}$,
\[
\frac{d}{dt}|X_t|
=
\frac{1}{2|X_t|}\frac{d}{dt}X_t^2
=
-\frac{X_t^2-1}{|X_t|}.
\]
Hence
\[
\frac{d}{dt}W_2(\mu_t,\mathbb{\mathrm{b}})^2
=
-2\,\E\!\left[(|X_t|-1)\frac{X_t^2-1}{|X_t|}\right].
\]
Since
\[
\frac{X_t^2-1}{|X_t|}
=
|X_t|-\frac1{|X_t|},
\]
we obtain
\[
\frac{d}{dt}W_2(\mu_t,\mathbb{\mathrm{b}})^2
=
-2\,\E\!\left[(|X_t|-1)\left(|X_t|-\frac1{|X_t|}\right)\right].
\]
By Cauchy--Schwarz,
\[
-\frac{d}{dt}W_2(\mu_t,\mathbb{\mathrm{b}})
\le
\left(
\E\!\left[\left(|X_t|-\frac1{|X_t|}\right)^2\right]
\right)^{1/2}.
\]
Since
\[
\E\!\left[\left(|X_t|-\frac1{|X_t|}\right)^2\right]
=\Psi^*(\mu_t|\mathbb{\mathrm{b}}),
\]
this yields
\begin{equation}\label{eq:WI_bool}
\frac{d^+}{dt}W_2(\mu_t,\mathbb{\mathrm{b}})
\le \sqrt{\Psi^*(\mu_t|\mathbb{\mathrm{b}})}.
\end{equation}

We now compute the evolution of the entropy along the same flow.  
By definition,
\[
\frac{d}{dt}\Gamma(\mu_t|\mathbb{\mathrm{b}})
=
\frac12\,\E\!\left[
\frac{d}{dt}Y_t
-
\frac{1}{Y_t}\frac{d}{dt}Y_t
\right]
=
\frac12\,\E\!\left[
\left(1-\frac1{Y_t}\right)\frac{d}{dt}Y_t
\right].
\]
Since $\frac{d}{dt}Y_t=-2(Y_t-1)$,
\[
\frac{d}{dt}\Gamma(\mu_t|\mathbb{\mathrm{b}})
=
-\,\E\!\left[
\left(1-\frac1{Y_t}\right)(Y_t-1)
\right].
\]
But
\[
\left(1-\frac1{Y_t}\right)(Y_t-1)
=
\frac{(Y_t-1)^2}{Y_t}
=
Y_t+\frac1{Y_t}-2.
\]
Therefore
\[
-\frac{d}{dt}\Gamma(\mu_t|\mathbb{\mathrm{b}})
=
\E\!\left[
Y_t+\frac1{Y_t}-2
\right]
=
\E\!\left[
X_t^2+\frac1{X_t^2}-2
\right]
=
\Psi^*(\mu_t|\mathbb{\mathrm{b}}).
\]

Applying Theorem~\ref{thm:HSI} to $\mu_t$ and using
$D^*(\mu_t|\mathbb{\mathrm{b}})\le D^*(\mu|\mathbb{\mathrm{b}})$, we obtain
\[
\sqrt{\Psi^*(\mu_t|\mathbb{\mathrm{b}})}
\le
-\frac{d}{dt}
\left[
D^*(\mu|\mathbb{\mathrm{b}})
\arccos\!\left(
\exp\!\left\{-\frac{\Gamma(\mu_t|\mathbb{\mathrm{b}})}{D^*(\mu|\mathbb{\mathrm{b}})^2}\right\}
\right)
\right].
\]
Combining this with \eqref{eq:WI_bool} and integrating on $[0,+\infty)$
gives \eqref{eq:WSH}, since $\mu_t\to\mathbb{\mathrm{b}}$ and
$\Gamma(\mathbb{\mathrm{b}}|\mathbb{\mathrm{b}})=0$.
\end{proof}
\section*{Conclusion and outlook}
In this paper, we developed a Boolean information-theoretic framework paralleling the classical and free settings, by introducing Microstates and Non-microstates Boolean Fisher informations and relating them to Boolean entropy through de Bruijn-type identities. We established Boolean analogues of the main functional inequalities, including logarithmic Sobolev, HWI, HSI, Talagrand and WSH-type inequalities, and complemented this approach with a Boolean Stein method, yielding quantitative Berry–Esseen bounds and entropic convergence rates. Taken together, these results complete the transport–entropy–information picture in the Boolean setting and highlight its analogies with  the classical and free frameworks. We also note that the recent work~\cite{garza2026finite} develops an information-theoretic framework for another branch of non-commutative probability, namely finite free probability.

Several natural questions remain open. Wande restricted ourselves to symmetric measures, and we postpone both the non-symmetric theory and multivariate extensions to future work. Moreover, we expect that a genuine HWI-type inequality relating the Microstates Boolean entropy $\Gamma(\mu|\mathbb{\mathrm{b}})$ and the Microstates Boolean Fisher $\Psi(\mu|\mathbb{\mathrm{b}})$ information should hold. These directions will be investigated in subsequent works.

\bibliographystyle{plain}
\bibliography{bibliography.bib}
\end{document}